\newtheorem{theorem}{Theorem}[section]
\newtheorem{lem}[theorem]{Lemma}
\newtheorem{prop}[theorem]{Proposition}
\newtheorem{corollary}[theorem]{Corollary}
\newenvironment{notation}[1][Notation]{\begin{trivlist}
\item[\hskip \labelsep {\bfseries #1}]}{\end{trivlist}}
\title{\textbf{A unified framework for the study of the PLS estimator's properties}}
\author{M\'elanie~Blaz\`ere,
        Fabrice~Gamboa
        and~ Jean-Michel~Loubes}
\date{} 
\begin{document}

\maketitle

\begin{abstract}
In this paper we propose a new approach to study the properties of the Partial Least Squares (PLS) estimator. This approach relies on the link between PLS and discrete orthogonal polynomials. Indeed many important PLS objects can be expressed in terms of some specific discrete orthogonal polynomials, called the residual polynomials. Based on the explicit analytical expression we have stated for these polynomials in terms of signal and noise, we provide a new framework for the study of PLS. Furthermore, we show that this new approach allows to simplify and retreive independent proofs of many classical results (proved earlier by different authors using various approaches and tools). This general and unifying approach also sheds new light on PLS and helps to gain insight on its properties. 
\end{abstract}

\begin{center}
{\bf \small Keywords} 
\end{center}
\begin{center}
Partial Least Square, multivariate regression, multicollinearity, dimension reduction, constrainsted least square, orthogonal polynomials, shrinkage.
\end{center}

\section{Introduction}
The PLS method, first introduced and developped by Wold in 1975, is an alternative to Ordinary Least Squares (OLS) when the explanatory variables are highly collinear or when they outnumber the observations. This method has been successfully applied in a wide variety of fields and has gained an increasing attention especially in chemical engeenering and genetics (we refer for instance to \cite{BOU07} and to  \cite{MR2457048}). The idea behind PLS is to first reduce the data to a well adapted low dimensional space (which takes into account the covariates and the response at the same time) to then perform estimation and prediction. Originally, it is a sequential procedure that leads to orthogonal latent components maximizing both the variance of the predictors and the covariance with the response variable. The number of components plays the role of the regularizer parameter and is usually chosen by cross validation. The PLS estimator is then defined by applying ordinary least squares to the latent components. Early references on PLS are \cite{NAES85}, \cite{HE88}, \cite{HE90}, \cite{MA92} and \cite{FRANCK93}. For more details on PLS we also refer to \cite{HE01} and \cite{ROS06}.

  PLS has been mainly investigated but its statistical properties are still little known. This is mainly due to the fact that this estimator depends in a non linear way of the response. We have developped a new approach for the study of the PLS properties (cf. \cite{BLAZERE}) based on the connections between PLS and orthogonal polynomials. In this paper we consider again these connections to provide a general and unified framework for the study of the PLS properties. Using this approach, we show that we can easily recover proofs of results on PLS proved earlier by several authors through various approaches. In this paper, we will also explain how our approach sheds new lights on the method and is powerful to gain more insight into the PLS properties.

Now let us detail the ouline of this paper. First, in Section \ref{section:framework}, we set the framework and the notations. Then, we recall in Section \ref{section:PLS} the main idea behind PLS and one of the main property of the associated estimator, that is its link with the Krylov subspaces. We also recall the connections between PLS and orthogonal polynomials. We introduce the residual polynomials, discuss their main properties and give their analytical expression stated in a previous paper (see \cite{BLAZERE}).
In Section \ref{section:filter}, we derive a new formula for the PLS filter factors which only depends on the residual polynomials. Using this new expression, we show how it is obvious to recover most of the main properties of these PLS filter factors (\cite{LIN00}, \cite{BUTLER00}). Section \ref{section:global shrinkage} provides a new expression for the PLS estimator in terms again of the residuals polynomials. We also state in this section a slightly modified proof of the fact that PLS is a global shrinkage estimator (\cite{JONG95}, \cite{GOU96}). 
Section \ref{section:empirical risk} investigates the behaviour of the empirical risk. We derive in the PLS frame, for the first time up to our knowledge, an exact analytical expression for the empirical risk in terms of signal and noise. From this expression we provide a more in depth analysis of the empirical risk. In particular we show that it is then straightforward to prove that PLS provides a better fit than Principal Components Regression (\cite{JONG93}, \cite{PHATAK02}).
Finally, in Section \ref{section:MSE}, we study the Mean Squares Error (MSE) of the PLS estimator. The decomposition of the MSE, stated in this section, highlights the similarities but also the differences between PLS and estimators with deterministic filter factors.

\section{Framework}
\label{section:framework}
\subsection{The regression model}
We denote by $A^T$ the transpose matrix of $A$ and by $I$ the identity matrix (we forget the index when there is no confusion concerning the size of the matrix). The Euclidean norm is denoted by $\| . \|$.

We consider the classical linear regression model 
\begin{equation}
\label{eq:regression-model}
Y=X\beta^{*}+\varepsilon
\end{equation}
 where $X$ is a $(n,p)$ matrix which contains the data and whose columns are the covariates. Each row represents an observation. The design matrix can be fixed or random.  $Y=(Y_{1},...,Y_{n})^{T} \in \mathbb{R}^{n}$ is the vector of the observed outcome also called the response. $\beta^{*}=(\beta_{1}^*,...,\beta_{p}^*)^{T} \in \mathbb{R}^{p} $ is the unknown parameter vector. The vector $\varepsilon=(\varepsilon_{1},...,\varepsilon_{n})^{T} \in \mathbb{R}^{n}$ contains the errors. The errors are assumed to be independent, centered, identically distributed with common variance $\sigma^{2}$ (we do not assume that the errors are Gaussian). To simplify we assume that $X$ and $Y$ are centered in such a way that there is no intercept.
 We allow $p$ to be much larger than $n$ and we denote by $r$ the rank of $X$. We assume that $r=\textrm{min}(n,p)$.

\subsection{Singular value decomposition of the design matrix}
An important and useful tool to study the properties of the PLS estimator is the Singular Value Decomposition (SVD).
The SVD of $X$ is given by $$X=UDV^{T}$$ where
\begin{itemize}
\item $U$ is a $(n,n)$ matrix and $U^{T}U=UU^{T}=I$. This means that the columns $u_{1},...,u_{n}$ of $U$ form an orthonormal basis of $\mathbb{R}^n$.
\item $V$ is a $(p,p)$ matrix and $V^{T}V=VV^{T}I$. So the columns $v_{1},...,v_{p}$ of $V$ form an orthonormal basis of $\mathbb{R}^p$.
\item $D\in \mathbb{M}_{n,p}$ is a matrix which contains $(\sqrt{\lambda_{1}},...,\sqrt{\lambda_{r}})$ on the diagonal and zero anywhere else (i.e. $d_{ii}=\sqrt{\lambda_i}$ for $i=1,...,r$ and $d_{ij}=0$ otherwise).
\end{itemize}
$\lambda_1,...,\lambda_r$ represent the non-zero positive eigenvalues of the predictor sample covariance matrix $X^TX$. Without loss of generality we assume that $\lambda_{1}\geq \lambda_{2}\geq ....\geq\lambda_{r}>0$. Of course when the design matrix is random the eigenelements of $X$ i.e. $\left( \lambda_i, u_i, v_i\right)$ are random too.

 \begin{notation}
 We denote by $\tilde{\varepsilon}_{i}:= \varepsilon^{T}u_{i}$, $i=1,...,n$ and $\tilde{\beta}^{*}_{i}:={\beta^{*}}^{T}v_{i}$, $i=1,...,p$ the projections of $\varepsilon$ and $\beta^{*}$ respectively onto the left and right eigenvectors of $X$. We also define two important quantities that appear frequently in the study of the PLS properties:
\begin{itemize}
 \item  $p_i=(X\beta^*)^Tu_i,\: i=1,...,n$
 \item   $\hat{p}_i=Y^Tu_i,\: i=1,...,n$.
 \end{itemize}

 \end{notation}

\section{Connections between PLS and discrete orthogonal polynomials}
\label{section:PLS}
\subsection{Why the PLS method?}

When the covariance matrix $X^TX$ is invertible ($p\leq n$) the Ordinary Least Squares (OLS) estimator of $\beta^*$ is
$$\hat{\beta}_{OLS}=(X^TX)^{-1}X^TY.$$
In many situations (genetics, chemometrics...) $p>n$ or $X^TX$ is ill conditionned because of multicollinearity and therefore $\hat{\beta}_{OLS}$ is not defined.
In this case we can still consider a similar estimator which is the minimum length least squares estimator defined by  
$$\hat{\beta}_{MLLS}:=(X^TX)^{-}X^TY,$$
 where $(X^TX)^{-}$ is the Moore Penrose inverse of $X^TX$ (see \cite{ENGL96}). The Moore Penrose inverse of $X^TX$ is defined by $$(X^TX)^{-}=\sum_{i=1}^{r}\lambda_i^{-1}v_iv_i^T. $$
Of course when $X^TX$ is invertible, $r=p$ and $(X^TX)^{-}=\sum_{i=1}^{p}\lambda_i^{-1}v_iv_i^T=(X^TX)^{-1}$. Hence, we recover the OLS estimator.
Expanding $\hat{\beta}_{MLLS}$ in the right eigenvectors directions gives
$\hat{\beta}_{MLLS}:=\sum_{i=1}^{r}\dfrac{\hat{p}_i}{\sqrt{\lambda_i}}v_i$.
For simplicity we just keep one notation and thus defined the Least Squares estimator as 
$$\hat{\beta}_{LS}=\sum_{i=1}^{r}\dfrac{\hat{p}_i}{\sqrt{\lambda_i}}v_i,$$
where we recall that $\hat{p}_i=Y^Tu_i$.

When some $\lambda_i$ are small the LS estimator as a high variance. In this case it is better to use alternative estimators, based on dimension reduction, like the one given by Principal Components Regression (PCR) (\cite{JO05}). In this case the data are projected only onto the eigenvectors associated with high eigenvalues
$$\hat{\beta}^{m}_{PCR}=\sum_{i=1}^{m}\dfrac{\hat{p}_i}{\sqrt{\lambda_i}}v_i,$$
where $m\leq r$ is the regularizing parameter.

However, in a regression context, PCR can fails in some situations. Indeed, \cite{JO82} highlighted some real-life examples where the principal components corresponding to small eigenvalues have high correlations with $Y$. To avoid this situation one can think of the PLS method. As mentionned before, this procedure takes into account the value of the response to build a low dimensional space by maximazing both the variance of the predictors and the covariance with the response variable. Then, the data are projected into this lower space to sequentially build latent components. For the algorithmic construction we refer to \cite{WO85} and to \cite{FRANCK93}. In our work we do not consider the sequential construction of the PLS components. We rather use that PLS is the minimization of least squares over some Krylov subspaces. 

\begin{prop}\cite{HE88}
\label{prop:CLS}
\begin{equation}
\hat{\beta}_{k}^{PLS}= \underset{\beta \in \mathcal{K}^{k}(X^{T}X, X^{T}Y)}{\textrm{argmin}}\Vert Y-X\beta\Vert^{2}
\label{eq:esti_betabis} 
\end{equation}
where $\mathcal{K}^{k}(X^{T}X,X^{T}Y)=\left\lbrace X^{T}Y, (X^{T}X)X^{T}Y,..., (X^{T}X)^{k-1}X^{T}Y\right\rbrace$.
\end{prop}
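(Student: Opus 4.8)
The plan is to characterize the PLS estimator via the sequential latent‑component construction and then show that the span of the PLS weight vectors after $k$ steps coincides with the Krylov subspace $\mathcal{K}^{k}(X^{T}X,X^{T}Y)$; the displayed identity then follows because performing ordinary least squares on the PLS latent components is exactly minimizing $\Vert Y - X\beta\Vert^{2}$ over that subspace. First I would recall the classical NIPALS/PLS recursion: set $X_{0}=X$, $r_{0}=Y$, and at step $j$ take the weight $w_{j}\propto X_{j-1}^{T}r_{j-1}$, the score $t_{j}=X_{j-1}w_{j}$, and then deflate $X_{j}=X_{j-1}-t_{j}t_{j}^{T}X_{j-1}/\Vert t_{j}\Vert^{2}$ and $r_{j}=r_{j-1}-t_{j}t_{j}^{T}r_{j-1}/\Vert t_{j}\Vert^{2}$. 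The PLS estimator $\hat{\beta}_{k}^{PLS}$ is by definition the vector such that $X\hat{\beta}_{k}^{PLS}$ is the projection of $Y$ onto $\mathrm{span}\{t_{1},\dots,t_{k}\}$, re‑expressed in the original coordinates through the weights.

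Second I would prove, by induction on $k$, the key span identity
\[
\mathrm{span}\{w_{1},\dots,w_{k}\}
=\mathcal{K}^{k}(X^{T}X,X^{T}Y)
=\mathrm{span}\{X^{T}Y,\,(X^{T}X)X^{T}Y,\dots,(X^{T}X)^{k-1}X^{T}Y\}.
\]
For $k=1$ this is immediate since $w_{1}\propto X^{T}Y$. For the inductive step one uses the deflation formulas to write $X_{j-1}^{T}r_{j-1}$ in terms of $X$, $Y$ and the already constructed scores/weights; a short computation shows that each new weight $w_{k}$ is a polynomial of degree $k-1$ in $X^{T}X$ applied to $X^{T}Y$, with leading term $(X^{T}X)^{k-1}X^{T}Y$, so adjoining $w_{k}$ to the previous span enlarges it by exactly the new Krylov direction (the procedure stopping precisely when the Krylov sequence ceases to grow, i.e. at $k=r$). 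Consequently $\mathrm{span}\{t_{1},\dots,t_{k}\}=X\,\mathcal{K}^{k}(X^{T}X,X^{T}Y)$.

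Third, with the span identity in hand, the two optimization problems coincide: $\hat{\beta}_{k}^{PLS}$ lies in $\mathcal{K}^{k}(X^{T}X,X^{T}Y)$ and, since $X\hat{\beta}_{k}^{PLS}$ is the orthogonal projection of $Y$ onto $X\,\mathcal{K}^{k}(X^{T}X,X^{T}Y)$, it minimizes $\Vert Y - X\beta\Vert^{2}$ over all $\beta$ in that Krylov subspace; conversely any minimizer of the right‑hand problem produces the same fitted value $X\beta$ by uniqueness of the orthogonal projection, and one checks the minimizer can be taken in the Krylov space itself (uniqueness of $\hat{\beta}_{k}^{PLS}$ as an element of $\mathcal{K}^{k}$ follows because $X$ restricted to that subspace is injective for $k\le r$). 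This yields \eqref{eq:esti_betabis}.

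The main obstacle is the inductive bookkeeping in the second step: one must track how the successive deflations $X_{j}=X_{j-1}-t_{j}t_{j}^{T}X_{j-1}/\Vert t_{j}\Vert^{2}$ transform $X_{j-1}^{T}r_{j-1}$, and verify that the orthogonality relations among the scores $t_{j}$ (which the PLS construction guarantees) make the lower‑order corrections stay inside the previously built span while the leading term advances the Krylov sequence by one power of $X^{T}X$. Everything else — the projection argument and the identification of the two minimization problems — is then routine linear algebra.
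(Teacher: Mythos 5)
Your proposal is correct and follows the classical route: the paper itself gives no proof of this proposition, citing \cite{HE88}, and your argument (NIPALS recursion, induction showing $\mathrm{span}\{w_1,\dots,w_k\}=\mathcal{K}^{k}(X^TX,X^TY)$ with the leading Krylov term surviving until the sequence terminates, then identification of the projection onto $\mathrm{span}\{t_1,\dots,t_k\}=X\,\mathcal{K}^{k}$ with the constrained least-squares problem) is precisely Helland's original proof. The only point to keep explicit in a full write-up is the non-degeneracy assumption the paper makes (the Krylov subspaces have full dimension up to $k=r$), which is what guarantees both that the leading coefficient in your induction is nonzero and that $X$ restricted to $\mathcal{K}^{k}$ is injective.
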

The space $\mathcal{K}^{k}(X^{T}X,X^{T}Y)$ spanned by $X^{T}Y, (X^{T}X)X^{T}Y,..., (X^{T}X)^{k-1}X^{T}Y$ (and denoted by $\mathcal{K}^{k}$ when there is no possible confusion) is called the $k^{th}$ Krylov subspace with respect to $X^{T}Y$ and $X^{T}X$ (\cite{SAAD92}). Notice that it is a random subspace because it depends on $Y$.

Proposition \ref{prop:CLS} above shows that the PLS estimator at step $k$ minimizes the least squares over some specific Krylov subspaces of dimension $k$. Notice that, contrary to classical projection methods, the PLS subspaces are random. This makes the PLS approach more difficult to study than the PCR one. Notice further that if $k=r$ then 
$\hat{\beta}_{k}^{PLS}=\hat{\beta}_{LS}$. 
Of course if $X^TX$ is invertible we recover $\hat{\beta}_{OLS}$ for $k=p$.

The maximal dimension of the Krylov subspaces sequence with respect to $k$ is equal to the number of different eigenvalues for which the associated $\hat{p}_i$ are non zero. Of course, this maximal dimension is always lower than the rank $r$ of $X$. Here, we assume that this maximal number is exactly equal to $r$.

\subsection{The discrete orthogonal polynomials approach}
\subsubsection{Link between PLS and discrete orthogonal polynomials  }
In this subsection we denote by $\mathcal{P}_{k}$ the set of all polynomials of degree at most $k$ and by $\mathcal{P}_{k,1}$ the subset of $\mathcal{P}_{k}$ constituted by polynomials with constant term equals to one. To simplify the notations we just denote by $\hat{\beta}_{k}$ the PLS estimator at step $k$.

Proposition \ref{prop:CLS} above is the starting point of our work. In fact, if we consider PLS with this algrebaic point of view, it is easy to see that the PLS estimator has a polynomial representation in terms of $X^TX$. It is a straightforward consequence of the fact that $\hat{\beta}_k \in \mathcal{K}^k$.
\begin{prop}
\label{prop:poly}
For $k\leq r$ we have
\begin{equation}
\label{eq:PLS_poly}
\hat{\beta}_{k}=\hat{P}_{k}(X^{T}X)X^{T}Y
\end{equation}
where $\hat{P}_{k} \in  \mathcal{P}_{k-1}$ and satisfies $$\Vert Y-X\hat{P}_{k}(X^{T}X)X^{T}Y\Vert^{2}=\underset{P \in \mathcal{P}_{k-1}}{\textrm{argmin}}\Vert Y-XP(X^{T}X)X^{T}Y\Vert^{2}$$
and 
\begin{equation}
\label{eq:residu-poly}\Vert Y-X\hat{\beta}_{k}\Vert^{2}=\Vert \hat{Q}_{k}(XX^{T})Y\Vert^{2}
\end{equation}
where $\hat{Q}_{k}(t)=1-t\hat{P}_{k}(t)$ lies in $\mathcal{P}_{k,1}$ and satisfies $\Vert \hat{Q}_{k}(XX^{T})Y\Vert^{2}=\underset{Q\in \mathcal{P}_{k,1}}{\textrm{min}}\Vert  Q(XX^{T})Y\Vert^{2}$.
\end{prop}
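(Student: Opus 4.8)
The plan is to translate the Krylov-space least-squares problem of Proposition~\ref{prop:CLS} into a minimization over polynomials, and then to exploit the intertwining identity relating $X^TX$ and $XX^T$ in order to pass from the estimator to the residual.

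First I would note that, by the very definition of $\mathcal{K}^k(X^TX,X^TY)$ as the span of $X^TY,(X^TX)X^TY,\dots,(X^TX)^{k-1}X^TY$, a vector $\beta$ lies in $\mathcal{K}^k$ if and only if $\beta=P(X^TX)X^TY$ for some $P\in\mathcal{P}_{k-1}$. Hence Proposition~\ref{prop:CLS} reads
$$\min_{\beta\in\mathcal{K}^k}\|Y-X\beta\|^2=\min_{P\in\mathcal{P}_{k-1}}\|Y-XP(X^TX)X^TY\|^2 ,$$
and $\hat\beta_k$ can be written $\hat\beta_k=\hat P_k(X^TX)X^TY$ with $\hat P_k$ attaining the right-hand minimum. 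That minimum is attained because $\{P(X^TX)X^TY:P\in\mathcal{P}_{k-1}\}$ is a finite-dimensional, hence closed, subspace of $\mathbb{R}^p$, so its image under $X$ is a closed subspace of $\mathbb{R}^n$ and the best approximation of $Y$ in it exists. Moreover $\hat\beta_k$ is unique since $\mathcal{K}^k\subseteq\mathrm{Range}(X^T)$, on which $X$ is injective, and $\hat P_k$ is unique because, $k\le r$ and the maximal Krylov dimension being equal to $r$, the vectors $(X^TX)^jX^TY$, $0\le j\le k-1$, are linearly independent.

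Next I would prove the intertwining relation $Xf(X^TX)=f(XX^T)X$ for every polynomial $f$; it suffices to check it on monomials, where $X(X^TX)^j=(XX^T)^jX$ follows by an immediate induction, and then extend by linearity. Applying it with $f=\hat P_k$ gives $X\hat P_k(X^TX)X^TY=(XX^T)\hat P_k(XX^T)Y$, so that
$$Y-X\hat\beta_k=\bigl(I-(XX^T)\hat P_k(XX^T)\bigr)Y=\hat Q_k(XX^T)Y,\qquad\hat Q_k(t):=1-t\hat P_k(t).$$
Since $\hat P_k\in\mathcal{P}_{k-1}$, the polynomial $t\hat P_k(t)$ has degree at most $k$ and vanishing constant term, hence $\hat Q_k\in\mathcal{P}_{k,1}$; this is \eqref{eq:residu-poly}. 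For the variational characterization of $\hat Q_k$, observe that $P\mapsto 1-tP(t)$ is a bijection from $\mathcal{P}_{k-1}$ onto $\mathcal{P}_{k,1}$, with inverse $Q\mapsto(1-Q)/t$ (well defined because $Q\in\mathcal{P}_{k,1}$ forces $1-Q$ to be divisible by $t$). The same intertwining computation shows $\|Y-XP(X^TX)X^TY\|^2=\|Q(XX^T)Y\|^2$ whenever $Q=1-tP(t)$; since $\hat P_k$ minimizes the left side over $\mathcal{P}_{k-1}$, its image $\hat Q_k$ minimizes $\|Q(XX^T)Y\|^2$ over $\mathcal{P}_{k,1}$.

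The main obstacle is not a computation but the bookkeeping around existence and uniqueness: one must be sure that the minimizing polynomial $\hat P_k$ is genuinely well defined, which is precisely where the hypotheses $k\le r$ and ``maximal Krylov dimension $=r$'' enter. Everything else reduces to the elementary identity $X(X^TX)^j=(XX^T)^jX$ together with the standard fact that orthogonal projection onto a finite-dimensional subspace exists and is unique.
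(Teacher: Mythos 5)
Your proof is correct and follows exactly the route the paper intends: the paper dispatches this proposition as a ``straightforward consequence of the fact that $\hat{\beta}_k \in \mathcal{K}^k$'', and your argument simply fills in the standard details (the identification of $\mathcal{K}^k$ with $\{P(X^TX)X^TY : P\in\mathcal{P}_{k-1}\}$, the intertwining identity $Xf(X^TX)=f(XX^T)X$, and the bijection $P\mapsto 1-tP(t)$ between $\mathcal{P}_{k-1}$ and $\mathcal{P}_{k,1}$). Your attention to existence and uniqueness, via the assumption that the maximal Krylov dimension equals $r$, is a welcome precision that the paper leaves implicit.
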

The polynomials $\hat{Q}_k$ are called the residual polynomials.

Notice that when $k=r$, $\hat{Q}_r(x)=\prod_{i=1}^{r}(1-\frac{x}{\lambda_i})$. Therefore $\Vert Y-X\hat{\beta}_{r}\Vert^{2}=\sum_{i=r+1}^{n}\hat{p}_i^2$ if $r<n$ and equals zero if $r=n$.\\

Now let us recall the link between the residual polynomials and discrete orthogonal polynomials (see \cite{NIK91} for futher details on discrete orthogonal polynomials). 

Let $\hat{Q}_{0}:=1$
\begin{prop}
$\hat{Q}_{0},\hat{Q}_{1},...,\hat{Q}_{r}$ is a sequence of discrete orthogonal polynomials with respect to the measure 
$$d\hat{\mu}(\lambda)=\sum_{j=1}^{r}\lambda_{j}(u_{j}^{T}Y)^{2}\delta_{\lambda_{j}}.$$
\label{prop: poly-ortho}
\end{prop}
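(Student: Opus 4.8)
The plan is to deduce the orthogonality from the variational characterisation of the residual polynomials in Proposition~\ref{prop:poly}, after diagonalising the relevant quadratic form by the SVD. Writing $XX^{T}=\sum_{i=1}^{r}\lambda_{i}u_{i}u_{i}^{T}$ and noting that $u_{r+1},\dots,u_{n}$ span the kernel of $XX^{T}$, functional calculus gives, for any polynomial $Q$,
$$\Vert Q(XX^{T})Y\Vert^{2}=\sum_{i=1}^{r}Q(\lambda_{i})^{2}(u_{i}^{T}Y)^{2}+Q(0)^{2}\sum_{i=r+1}^{n}(u_{i}^{T}Y)^{2}.$$
When $Q\in\mathcal{P}_{k,1}$ we have $Q(0)=1$, so the second sum is a constant independent of $Q$; by Proposition~\ref{prop:poly}, $\hat{Q}_{k}$ is therefore the minimiser over $\mathcal{P}_{k,1}$ of $J(Q):=\sum_{i=1}^{r}(u_{i}^{T}Y)^{2}Q(\lambda_{i})^{2}$ (the minimiser is unique, $J$ being a strictly convex quadratic on the affine space $\mathcal{P}_{k,1}$ since the nodes $\lambda_{1},\dots,\lambda_{r}$ are distinct and $k\le r$).

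Next I would write the first-order optimality condition. Every element of $\mathcal{P}_{k,1}$ is of the form $\hat{Q}_{k}+tS$ with $S\in\mathcal{P}_{k-1}$: the difference of two polynomials of degree $\le k$ with constant term $1$ is a polynomial of degree $\le k$ vanishing at $0$, i.e. $t$ times a polynomial of degree $\le k-1$. Hence, for each fixed $S\in\mathcal{P}_{k-1}$, the scalar map $s\mapsto J(Q_{s})$, where $Q_{s}(t):=\hat{Q}_{k}(t)+s\,t\,S(t)\in\mathcal{P}_{k,1}$, attains its minimum at $s=0$, and differentiating yields $\sum_{i=1}^{r}(u_{i}^{T}Y)^{2}\hat{Q}_{k}(\lambda_{i})\,\lambda_{i}S(\lambda_{i})=0$, that is
$$\int \hat{Q}_{k}\,S\;d\hat{\mu}=\sum_{i=1}^{r}\lambda_{i}(u_{i}^{T}Y)^{2}\,\hat{Q}_{k}(\lambda_{i})S(\lambda_{i})=0\qquad\text{for every }S\in\mathcal{P}_{k-1}.$$
The weight $\lambda_{i}$ --- hence precisely the measure $\hat{\mu}$ --- appears because the admissible perturbation directions are the polynomials that vanish at $0$. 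Since $\hat{Q}_{l}\in\mathcal{P}_{l}\subset\mathcal{P}_{k-1}$ for all $l<k$, this gives $\int\hat{Q}_{k}\hat{Q}_{l}\,d\hat{\mu}=0$ whenever $l\neq k$.

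Finally I would check that $\deg\hat{Q}_{k}=k$ for every $k\le r$, so that $(\hat{Q}_{k})_{0\le k\le r}$ is genuinely a sequence of orthogonal polynomials and not merely a pairwise orthogonal family. If some $\hat{Q}_{k}$ had degree $<k$ it would lie in $\mathcal{P}_{k-1,1}$ and, by uniqueness of the minimiser, coincide with $\hat{Q}_{k-1}$; then the Krylov subspaces would stop growing before dimension $k\le r$, contradicting the standing assumption that their maximal dimension is $r$. That same assumption --- equivalently, that the $\lambda_{i}$ are pairwise distinct and all $\hat{p}_{i}\neq0$ --- is what makes $\hat{\mu}$ a positive measure with exactly $r$ atoms (and renders the normalisation $\hat{Q}_{k}(0)=1$ legitimate, the roots of $\hat{Q}_{k}$ lying in $(\lambda_{r},\lambda_{1})\subset(0,\infty)$). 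I expect the genuinely delicate point to be the passage from the minimisation property to $\hat{\mu}$-orthogonality, and in particular the bookkeeping of the factor $\lambda_{i}$ forced by the constraint $Q(0)=1$; the SVD computation and the degree count are routine. An alternative I would mention is to check directly, using $X\,g(X^{T}X)X^{T}=\sum_{i=1}^{r}\lambda_{i}g(\lambda_{i})u_{i}u_{i}^{T}$ and $\hat{Q}_{j}(t)=1-t\hat{P}_{j}(t)$, that $\int\hat{Q}_{k}\hat{Q}_{l}\,d\hat{\mu}=g_{k}^{T}g_{l}$ with $g_{j}:=X^{T}X\hat{\beta}_{j}-X^{T}Y=-\hat{Q}_{j}(X^{T}X)X^{T}Y$, and then to observe that $g_{j}\in\mathcal{K}^{j+1}$ while $g_{k}\perp\mathcal{K}^{k}$ by optimality of $\hat{\beta}_{k}$ on $\mathcal{K}^{k}$, so that $g_{k}^{T}g_{l}=0$ for $l<k$.
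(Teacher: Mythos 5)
The paper does not actually prove this proposition in the text --- it is deferred to \cite{BLAZERE} --- so there is no in-paper argument to compare against line by line; judged on its own terms, your proof is correct and self-contained. Your main line (diagonalise $\Vert Q(XX^{T})Y\Vert^{2}$ via the SVD, note that the term $Q(0)^{2}\sum_{i>r}(u_{i}^{T}Y)^{2}$ is constant on $\mathcal{P}_{k,1}$, and extract the first-order optimality condition along the admissible directions $tS$ with $S\in\mathcal{P}_{k-1}$ --- which is exactly where the extra weight $\lambda_{i}$, and hence the measure $\hat{\mu}$ rather than $\sum_j\hat{p}_j^2\delta_{\lambda_j}$, comes from) is the natural variational argument, and your alternative via the residual vectors $g_{j}=-\hat{Q}_{j}(X^{T}X)X^{T}Y$, with $g_{l}\in\mathcal{K}^{l+1}\subseteq\mathcal{K}^{k}$ for $l<k$ and $g_{k}\perp\mathcal{K}^{k}$ by optimality, is an equally valid and arguably cleaner packaging of the same fact. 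The one step to tighten is the degree count: from $\deg\hat{Q}_{k}<k$ you infer $\hat{Q}_{k}=\hat{Q}_{k-1}$ and then assert that the Krylov subspaces ``stop growing,'' but stagnation of the iterate is not literally the same statement as stagnation of $\mathcal{K}^{k}$, so as written this is a non sequitur. You do not need it: once $\int\hat{Q}_{k}S\,d\hat{\mu}=0$ for all $S\in\mathcal{P}_{k-1}$ is established, $\hat{Q}_{k}=\hat{Q}_{k-1}\in\mathcal{P}_{k-1}$ would force $\int\hat{Q}_{k-1}^{2}\,d\hat{\mu}=0$, so $\hat{Q}_{k-1}$ would vanish at the $r$ distinct atoms while having degree at most $k-1\leq r-1$, hence would be identically zero, contradicting $\hat{Q}_{k-1}(0)=1$. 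With that one-line repair the argument is complete; your closing remarks on the standing assumption (pairwise distinct $\lambda_{i}$, all $\hat{p}_{i}\neq 0$, and the degenerate zero-norm polynomial $\hat{Q}_{r}$) correctly locate where positivity of the measure is actually used.
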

\begin{proof}
For the proof of this proposition we refer to \cite{BLAZERE}.
\end{proof}

The support of the measure $\hat{\mu}$ is the non-zero spectrum of the covariance matrix $X^TX$ and the associated weights are the $\lambda_{j}(u_{j}^{T}Y)^{2}=\left( \sqrt{\lambda_i}\hat{p}_i\right) ^{2}=\left( (Xv_i)^TY\right) ^{2}$. The weight are positive and the magnitude of the point masses correspond in fact to the covariance between the principal components and the response $Y$. Thus, the measure $\hat{\mu}$ captures both the variation in $X$ and the correlation between $X$ and $Y$ along each eigenvector direction.

\subsubsection{Interest of the residual polynomials}
\label{subsection:interest}
Using Proposition \ref{prop:poly} and expanding $X^TX$ and $XX^T$ in terms of the right and left eigenvectors of $X$, we can write most PLS objects just in terms of the eigenelements of $X$ and of the residual polynomials:
\begin{itemize}
 \item $\hat{\beta}_k=\hat{P}_{k}(X^{T}X)X^{T}Y=\sum_{i=1}^{r}\left(1-\hat{Q}_k(\lambda_i)\right) \dfrac{\hat{p}_i}{\sqrt{\lambda_i}}v_i .$
 \item $X\hat{\beta}_k=(I-\hat{Q}_k(XX^T))Y=\sum_{i=1}^{r}\left(1-\hat{Q}_k(\lambda_i)\right) \hat{p}_iu_i .$
 \item $Y-X\hat{\beta}_k=\hat{Q}_k(XX^T)Y=\left\lbrace 
\begin{array}{ccc}
 \sum_{i=1}^{r}\hat{Q}_k(\lambda_i) \hat{p}_iu_i+\sum_{i=r+1}^{n}\hat{p}_iv_i& \mbox{if} &  r<n \\
 \sum_{i=1}^{r}\hat{Q}_k(\lambda_i) \hat{p}_iu_i. & \mbox{if} & r=n
\end{array}\right.
 $
 \end{itemize} 
 because $\hat{Q}_k(0)=1$.
The aim of the next subsection is to provide an expression for the residual polynomials easier to interpret and well tailored to the study of the PLS properties.

\subsubsection{Expression of the residual polynomials}

Based on the theory of orthogonal polynomials and Proposition \ref{prop: poly-ortho}, we can provide an explicit formula for  the residual polynomials $\left( \hat{Q}_{k}\right) _{1\leq k \leq r}$.  This formula clearly shows how the disturbance on the observations and the distribrution of the spectrum impact on the residuals. This expression of the residual polynomials contains all the information necessary to study the PLS properties.

\begin{theorem}
\label{theo: expression-det-2}
Let $k\leq r$ and $$I_{k}^{+}=\left\lbrace ( j_{1},...,j_{k}): r\geq j_{1}>...>j_{k}\geq 1\right\rbrace.$$

We have
\begin{equation}
\label{eq:final-expression-residuals}
\hat{Q}_{k}(x)=\sum_{(j_{1},..,j_{k})\in I^{+}_{k}}\left[ 
\dfrac{\hat{p}_{j_{1}}^{2}...\hat{p}_{j_{k}}^{2}\lambda_{j_{1}}^{2}...\lambda_{j_{k}}^{2}V(\lambda_{j_{1}},...,\lambda_{j_{k}})^{2}}{\sum_{(j_{1},..,j_{k})\in I^{+}_{k}} \hat{p}_{j_{1}}^{2}...\hat{p}_{j_{k}}^{2}\lambda_{j_{1}}^{2}...\lambda_{j_{k}}^{2}V(\lambda_{j_{1}},...,\lambda_{j_{k}})^{2}}\right] \prod_{l=1}^{k}(1-\frac{x}{\lambda_{j_{l}}}).
\end{equation}
where  we recall that $\hat{p}_{i}:= p_{i}+ \tilde{\varepsilon}_{i} $ with $p_{i}:=(X\beta^{*})^{T}u_{i}=\sqrt{\lambda_{i}}\tilde{\beta}^{*}_{i}$ and $\tilde{\varepsilon}_{i}:= \varepsilon^{T}u_{i}$.
\end{theorem}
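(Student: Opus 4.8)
The plan is to exploit the characterization from Proposition \ref{prop:poly}, namely that $\hat{Q}_k$ is the unique polynomial in $\mathcal{P}_{k,1}$ minimizing $\Vert Q(XX^T)Y\Vert^2$, together with the orthogonal-polynomial structure from Proposition \ref{prop: poly-ortho}. Writing $Y = \sum_{i} \hat{p}_i u_i$ (plus the part outside the range of $X$, which is annihilated by any polynomial in $XX^T$ acting with a zero factor — more precisely $Q(XX^T)Y$ only sees the $\lambda_i$-components for $i\le r$ up to the constant term), we have $\Vert Q(XX^T)Y\Vert^2 = \sum_{i=1}^r Q(\lambda_i)^2 \hat{p}_i^2 + (\text{const term})$. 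Since $Q\in\mathcal{P}_{k,1}$ means $Q(0)=1$, and since each $\hat{Q}_k$ must vanish at $k$ of the nodes $\lambda_i$ (it is the $k$-th orthogonal polynomial for a measure supported on $r$ points, hence has exactly $k$ real roots among those nodes — this is where the orthogonality is used), the first move is to write $\hat{Q}_k(x) = \prod_{l=1}^k (1 - x/\lambda_{j_l})$ for some index set $(j_1,\dots,j_k)$; but of course $\hat{Q}_k$ is not a single such product — rather the claim is that it equals a convex combination of all such products, so the right framing is the one below.

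First I would set up the minimization explicitly: among all $Q\in\mathcal{P}_{k,1}$, we minimize $F(Q) := \sum_{i=1}^r w_i Q(\lambda_i)^2 / \lambda_i$ where $w_i = \lambda_i \hat{p}_i^2$, equivalently $\sum_i \hat{p}_i^2 Q(\lambda_i)^2$. Parametrize $Q$ by its values — or better, expand $Q$ in a basis and use Lagrange/Cramer. The cleanest route: use the extremal property to show $\hat{Q}_k(\lambda_i)\hat{p}_i^2$ is orthogonal (in the node-weighted inner product) to $1, x, \dots, x^{k-1}$, which is exactly Proposition \ref{prop: poly-ortho}; then invoke the classical determinantal (Heine-type) formula for orthogonal polynomials on a discrete set. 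That formula expresses the $k$-th orthogonal polynomial as a ratio of determinants of moment matrices, and the moments here are $c_m = \sum_{i=1}^r \hat{p}_i^2 \lambda_i^{m+1}$ (matching the measure $d\hat\mu$). Expanding both determinants by the Cauchy–Binet formula turns each into a sum over $I_k^+$ of products of $\hat{p}_{j_l}^2\lambda_{j_l}$ times a Vandermonde determinant $V(\lambda_{j_1},\dots,\lambda_{j_k})$ squared; the extra powers of $\lambda_{j_l}$ in \eqref{eq:final-expression-residuals} come from the shift by one in the moments (the weight is $\lambda_i(u_i^TY)^2$, not $(u_i^TY)^2$) and from clearing the leading coefficient to normalize the constant term to one rather than the leading term.

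So the key steps, in order, are: (1) reduce $\Vert\hat{Q}_k(XX^T)Y\Vert^2$ to a weighted sum over the spectrum using the SVD, identifying the measure $\hat\mu$; (2) invoke Proposition \ref{prop: poly-ortho} to say $\hat{Q}_k$ is the monic-at-zero (rather than monic-at-top) orthogonal polynomial of degree $k$ for $\hat\mu$; (3) write down the Heine determinant formula for this polynomial in terms of the moments $c_m=\sum_i \hat{p}_i^2\lambda_i^{m+1}$; (4) apply Cauchy–Binet to numerator and denominator determinants to get sums over $I_k^+$ with Vandermonde factors; (5) renormalize so the constant term is $1$, which produces the stated ratio with the product $\prod_l(1-x/\lambda_{j_l})$ and pins down the power $\lambda_{j_l}^2$ (one factor of $\lambda_{j_l}$ from the shifted moment/weight, one from the renormalization that trades $\prod\lambda_{j_l}$ in and out). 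I expect the main obstacle to be bookkeeping in step (5): getting the exponent on each $\lambda_{j_l}$ exactly right and checking that the Vandermonde appears squared (not to the first power), which requires being careful that Cauchy–Binot is applied to a product of two matrices each of which contributes one Vandermonde, and that the normalization factor is itself a sum over $I_k^+$ so that the whole expression is manifestly a convex combination summing to $\hat{Q}_k(0)=1$.
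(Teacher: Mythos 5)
Your proposal is correct and follows essentially the same route as the paper's proof (which is deferred to \cite{BLAZERE}, but whose determinantal machinery is visible in the proof of Lemma \ref{lem:decompo1}, where the Hankel determinant $G_2$ is expanded by exactly the Cauchy--Binet argument you describe): identify $\hat{Q}_k$ as the degree-$k$ orthogonal polynomial for $\hat{\mu}$ normalized to equal $1$ at $0$, represent it by the Heine determinant in the moments $c_m=\sum_i\hat{p}_i^2\lambda_i^{m+1}$, and expand numerator and denominator as sums over $I_k^{+}$ with Vandermonde-squared weights. Your bookkeeping in step (5) is also right: one factor of $\lambda_{j_l}$ comes from the weight $\lambda_{j_l}\hat{p}_{j_l}^{2}$ of $\hat{\mu}$ and one from rewriting $\prod_{l}(x-\lambda_{j_l})$ as $(-1)^{k}\bigl(\prod_{l}\lambda_{j_l}\bigr)\prod_{l}(1-x/\lambda_{j_l})$ when normalizing at $0$, while the squared Vandermonde arises because the Hankel matrix factors as $AA^{T}$ with $A_{i,l}=\hat{p}_l\lambda_l^{i+1}$, so Cauchy--Binet contributes one Vandermonde from each factor.
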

\begin{proof}
We refer again to \cite{BLAZERE} for the proof of this theorem.
\end{proof}
The right hand side of Equation (\ref{eq:final-expression-residuals}) is of course a polynomial of degree $k$ and is equal to one at zero.

A look to the expression of the residual polynomials in Theorem \ref{theo: expression-det-2} gives a better understanding of the PLS complexity. In fact, contrary to PCR, all the eigenvectors directions are taken into account at each step. Besides the residuals depend in a complicated way on the response through the normalization and the product of the $\hat{p}_i$. However, we can give an interpretation of this formula easier to understand.

Indeed, for all $(j_{1},...,j_{k}) \in I_{k}^{+}$, let $$\hat{w}_{j_{1},..,j_{k}}:=\dfrac{\hat{p}_{j_{1}}^{2}...\hat{p}_{j_{k}}^{2}\lambda_{j_{1}}^{2}...\lambda_{j_{k}}^{2}V(\lambda_{j_{1}},...,\lambda_{j_{k}})^{2}}{\sum_{(j_{1},..,j_{k})\in I^{+}_{k}} \hat{p}_{j_{1}}^{2}...\hat{p}_{j_{k}}^{2}\lambda_{j_{1}}^{2}...\lambda_{j_{k}}^{2}V(\lambda_{j_{1}},...,\lambda_{j_{k}})^{2}}.$$
We define $Z_k:=\sum_{(j_{1},..,j_{k})\in I^{+}_{k}} \hat{p}_{j_{1}}^{2}...\hat{p}_{j_{k}}^{2}\lambda_{j_{1}}^{2}...\lambda_{j_{k}}^{2}V(\lambda_{j_{1}},...,\lambda_{j_{k}})^{2}$ as the normalized constant. 

Then, we have $$\hat{Q}_{k}(x)=\sum_{(j_{1},..,j_{k})\in I^{+}_{k}} \left[ \hat{w}_{(j_{1},..,j_{k})}\prod_{l=1}^{k}(1-\frac{x}{\lambda_{j_{l}}})\right] ,$$
where $\prod_{l=1}^{k}(1-\frac{x}{\lambda_{j_{l}}})$ lies again in $\mathcal{P}_{k,1}$. Furthermore, its roots $\lambda_{j_{1}},...,\lambda_{j_{k}}$ are members of the spectrum of $XX^{T}$.
Notice that $$0<\hat{w}_{(j_{1},..,j_{k})}\leq 1$$ and $$\sum_{(j_{1},..,j_{k})\in I^{+}_{k}}\hat{w}_{(j_{1},..,j_{k})}=1.$$
So we can interpret the weights $(\hat{w}_{(j_{1},..,j_{k})})_{I^{+}_{k}}$ as probabilities on $\mathcal{P}_{k,1}$ supported by polynomials having their roots in the spectrum of the design matrix.

Therefore $\hat{Q}_{k}(\lambda_{i})$ is the sum over all elements in $I_{k}^{+}$ of $\prod_{l=1}^{k}(1-\frac{x}{\lambda_{j_{l}}})$ weighted by the probabilities $\hat{w}_{(j_{1},..,j_{k})}$. In other words, it is the convex combinaison of all the polynomials in $\mathcal{P}_{k,1}$ whose roots are subsets of $\left\lbrace \lambda_{1},...,\lambda_{n}\right\rbrace $. The Vandermonde determinant, in the weights, means that the probability of a polynomial with multiple roots is zero. The weights themselves are not easy to interpret. However, they are even greater when the magnitude and the distance between the involved eigenvalues are large and the contribution of the response along the associated eigenvectors is important. In particular, polynomials whose roots are  associated to large $\left( \lambda_i^2p_i^2\right) $ have more heavy weight.

It is not possible to compute exactly the expectation of the weights $\hat{w}_{(j_{1},..,j_{k})}$ (because of the normalized constant) but we can provide a first order approximation of $\mathbb{E}\left[\hat{w}_{(j_{1},..,j_{k})}  \right]$ in case of a fixed design matrix.
\begin{lem}
Let $X$ be a fixed design matrix. Then
\label{lem:expectation}
$$0\leq \mathbb{E}\left[\hat{w}_{(j_{1},..,j_{k})}  \right]\simeq \dfrac{\left( p_{j_{1}}^{2}+\sigma^{2}\right) ...\left( p_{j_{k}}^{2}+\sigma^{2}\right) \lambda_{j_{1}}^{2}...\lambda_{j_{k}}^{2}V(\lambda_{j_{1}},...,\lambda_{j_{k}})^{2}}{\sum_{(j_{1},..,j_{k})\in I^{+}_{k}} \left[ \left( p_{j_{1}}^{2}+\sigma^{2}\right) ...\left( p_{j_{k}}^{2}+\sigma^{2}\right)\lambda_{j_{1}}^{2}...\lambda_{j_{k}}^{2}V(\lambda_{j_{1}},...,\lambda_{j_{k}})^{2}\right]}\quad (\textrm{first order} ).$$
\end{lem}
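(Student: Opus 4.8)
The plan is to treat $\hat w_{(j_1,\dots,j_k)}$ as a ratio of two random variables and to apply the classical first--order (delta--method) approximation for the mean of a quotient. First I would write $N_{(j_1,\dots,j_k)}:=\hat p_{j_1}^2\cdots\hat p_{j_k}^2\,\lambda_{j_1}^2\cdots\lambda_{j_k}^2\,V(\lambda_{j_1},\dots,\lambda_{j_k})^2$, so that $\hat w_{(j_1,\dots,j_k)}=N_{(j_1,\dots,j_k)}/Z_k$ with $Z_k=\sum_{(j_1,\dots,j_k)\in I_k^+}N_{(j_1,\dots,j_k)}$, and abbreviate $N=N_{(j_1,\dots,j_k)}$. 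Since $0<\hat w_{(j_1,\dots,j_k)}\le 1$ (already recorded above), the expectation exists and lies in $(0,1]$, which is precisely the left--hand inequality $0\le\mathbb E[\hat w_{(j_1,\dots,j_k)}]$. For the right--hand side, I would expand $N/Z_k$ around the point $(\mathbb E[N],\mathbb E[Z_k])$ and retain only the leading term, namely $\mathbb E[\hat w_{(j_1,\dots,j_k)}]\simeq\mathbb E[N]/\mathbb E[Z_k]$; this is the sense of "first order" in the statement, and it is legitimate because $\mathbb E[Z_k]>0$ (each summand is nonnegative and generically strictly positive). By linearity, $\mathbb E[Z_k]=\sum_{(j_1,\dots,j_k)\in I_k^+}\mathbb E[N_{(j_1,\dots,j_k)}]$, so the problem reduces to evaluating $\mathbb E[N_{(j_1,\dots,j_k)}]$ for a single multi--index.

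Next I would compute that expectation. Because $X$ is a fixed design, the eigenvalues $\lambda_i$ and the left singular vectors $u_i$ -- hence the Vandermonde factor $V(\lambda_{j_1},\dots,\lambda_{j_k})$ -- are deterministic, so
$$\mathbb E[N_{(j_1,\dots,j_k)}]=\lambda_{j_1}^2\cdots\lambda_{j_k}^2\,V(\lambda_{j_1},\dots,\lambda_{j_k})^2\;\mathbb E\!\left[\hat p_{j_1}^2\cdots\hat p_{j_k}^2\right].$$
Now I would insert $\hat p_{j_l}=p_{j_l}+\tilde\varepsilon_{j_l}$, with $p_{j_l}$ deterministic and $\tilde\varepsilon_{j_l}=\varepsilon^T u_{j_l}$ centered, and expand the product $\prod_{l=1}^k(p_{j_l}+\tilde\varepsilon_{j_l})^2$. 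Taking expectations, the terms linear in a single $\tilde\varepsilon_{j_l}$ vanish by centering; the remaining cross terms involve joint moments of the $\tilde\varepsilon_{j_l}$ over the pairwise distinct indices of $I_k^+$, and since $\mathbb E[\tilde\varepsilon_{j_l}\tilde\varepsilon_{j_m}]=\sigma^2 u_{j_l}^T u_{j_m}=0$ for $l\ne m$ and $\mathbb E[\tilde\varepsilon_{j_l}^2]=\sigma^2$, these contributions either vanish or, to the retained order, amount to one factor $\sigma^2$ per index. Collecting the surviving terms reconstitutes $\prod_{l=1}^k(p_{j_l}^2+\sigma^2)$. Substituting this back, summing over $I_k^+$ to obtain $\mathbb E[Z_k]$, and dividing yields exactly the displayed first--order expression.

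I expect the genuine difficulty to lie in making precise the two places where "first order" is invoked, since everything else is bookkeeping with the constant eigenvalue and Vandermonde factors. First, $N$ is itself one of the summands of $Z_k$, so numerator and denominator are strongly correlated and $\mathbb E[N/Z_k]\approx\mathbb E[N]/\mathbb E[Z_k]$ is only the leading term of the delta expansion; controlling the remainder requires a bound on the relative fluctuations of $Z_k$ (for instance through $\mathrm{Var}(Z_k)/\mathbb E[Z_k]^2$), hence on the fourth moments of the $\tilde\varepsilon_i$. Second, the factorization $\mathbb E[\hat p_{j_1}^2\cdots\hat p_{j_k}^2]=\prod_l(p_{j_l}^2+\sigma^2)$ is exact under Gaussian errors but in general carries corrections: for distinct $l,m$ one has $\mathbb E[\tilde\varepsilon_{j_l}^2\tilde\varepsilon_{j_m}^2]=\sigma^4+(\mathbb E[\varepsilon_1^4]-3\sigma^4)\sum_a u_{j_l,a}^2u_{j_m,a}^2$, and the extra term is negligible only because $\sum_a u_{j_l,a}^2u_{j_m,a}^2$ is small (typically of order $1/n$) and the odd joint moments are controlled by the skewness of the errors. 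Spelling out in what asymptotic regime these two approximations are "first order" is, in my view, the real content of the proof.
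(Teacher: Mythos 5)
Your proposal is correct and follows essentially the same route as the paper: a first-order Taylor (delta-method) approximation $\mathbb{E}[N/Z_k]\simeq\mathbb{E}[N]/\mathbb{E}[Z_k]$ followed by factorizing $\mathbb{E}\bigl[\hat{p}_{j_1}^2\cdots\hat{p}_{j_k}^2\bigr]=\prod_{l}(p_{j_l}^2+\sigma^2)$ over the distinct indices and using linearity in the denominator. If anything you are more careful than the paper, which simply asserts independence of the $\hat{p}_{j_l}$ even though the errors are not assumed Gaussian; your remark that the factorization is exact only under Gaussianity and otherwise carries fourth-moment corrections is a legitimate refinement of the paper's argument, not a departure from it.
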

\begin{proof}
Let $S$ and $T$ two random variables such that $T$ either has no mass at $0$ (if discrete) or has support in $\left[ 0,  +\infty \right[  $ (if continuous). The first order Taylor expansion of $f:(s,t)\mapsto \frac{s}{t}$ around $(\mathbb{E}(S), \mathbb{E}(T))$ provides a first order approximation of the expectation of 
$\dfrac{S}{T}$: $$\mathbb{E}\left[\dfrac{S}{T}\right]\simeq \dfrac{\mathbb{E}(S)}{\mathbb{E}(T)} \quad \textrm{(first order)}.$$ 
Applying this result with $$R:=\hat{p}_{j_{1}}^{2}...\hat{p}_{j_{k}}^{2}\lambda_{j_{1}}^{2}...\lambda_{j_{k}}^{2}V(\lambda_{j_{1}},...,\lambda_{j_{k}})^{2}$$ and $$T:=\sum_{(j_{1},..,j_{k})\in I^{+}_{k}} \hat{p}_{j_{1}}^{2}...\hat{p}_{j_{k}}^{2}\lambda_{j_{1}}^{2}...\lambda_{j_{k}}^{2}V(\lambda_{j_{1}},...,\lambda_{j_{k}})^{2} $$ leads to 
\begin{equation}
\label{eq:E1}
\mathbb{E}\left[\hat{w}_{(j_{1},..,j_{k})}  \right]\simeq \dfrac{\mathbb{E}\left[  \hat{p}_{j_{1}}^{2}...\hat{p}_{j_{k}}^{2}\lambda_{j_{1}}^{2}...\lambda_{j_{k}}^{2}V(\lambda_{j_{1}},...,\lambda_{j_{k}})^{2}\right]}{\mathbb{E}\left[\sum_{(j_{1},..,j_{k})\in I^{+}_{k}} \hat{p}_{j_{1}}^{2}...\hat{p}_{j_{k}}^{2}\lambda_{j_{1}}^{2}...\lambda_{j_{k}}^{2}V(\lambda_{j_{1}},...,\lambda_{j_{k}})^{2} \right]}\quad (\textrm{first order} ).
\end{equation}

Let $(j_1,...,j_k)\in I_{k}^{+}$. Because $j_1 \neq ...\neq j_k$, the random variables $\hat{p}_{j_{1}},...,\hat{p}_{j_{k}}$ are independant. Therefore we have
$$ 
 \mathbb{E}\left[  \hat{p}_{j_{1}}^{2}...\hat{p}_{j_{k}}^{2}\lambda_{j_{1}}^{2}...\lambda_{j_{k}}^{2}V(\lambda_{j_{1}},...,\lambda_{j_{k}})^{2}\right]=\mathbb{E}\left(  \hat{p}_{j_{1}}^{2}\right) ...\mathbb{E}\left(\hat{p}_{j_{k}}^{2}\right) \lambda_{j_{1}}^{2}...\lambda_{j_{k}}^{2}V(\lambda_{j_{1}},...,\lambda_{j_{k}})^{2}$$
\begin{equation}
\label{eq:E2}
=\left(  p_{j_{1}}^{2}+\sigma^{2}\right) ...\left(p_{j_{k}}^{2}+\sigma^{2}\right) \lambda_{j_{1}}^{2}...\lambda_{j_{k}}^{2}V(\lambda_{j_{1}},...,\lambda_{j_{k}})^{2}.\end{equation}
Then, by linearity of the expectation, we get
 \begin{equation}
 \label{eq:E3}
 \mathbb{E}\left[\sum_{(j_{1},..,j_{k})\in I^{+}_{k}} \hat{p}_{j_{1}}^{2}...\hat{p}_{j_{k}}^{2}\lambda_{j_{1}}^{2}...\lambda_{j_{k}}^{2}V(\lambda_{j_{1}},...,\lambda_{j_{k}})^{2} \right]=\sum_{(j_{1},..,j_{k})\in I^{+}_{k}} \left[\prod_{l=1}^{k} \left[ \left( p_{j_{l}}^{2}+\sigma^{2}\right) \lambda_{j_{l}}^{2}\right] V(\lambda_{j_{1}},...,\lambda_{j_{k}})^{2}\right]. 
 \end{equation}
 To conclude, from Equation (\ref{eq:E1}), (\ref{eq:E2}) and (\ref{eq:E3}), we deduce Lemma \ref{lem:expectation}.
\end{proof}

\subsubsection{Other properties of the residuals polynomials}
\label{subsection:others_properties}
In this subsection we present other useful properties of the residual polynomials that will be very useful later on in this paper (in particular the second point).
\begin{lem}\
\label{lem:properties}
\begin{enumerate}
\item $\hat{Q}_k$ has $k$ real zeros.
\item $\sum_{i=1}^{r}\hat{Q}_k(\lambda_i)^2\hat{p}_i^2=\sum_{i=1}^{r}\hat{Q}_k(\lambda_i)\hat{p}_i^2.$
\item $\sum_{i=1}^{r}\hat{Q}_j(\lambda_i)\hat{Q}_k(\lambda_i)\hat{p}_i^2=\sum_{i=1}^{r}\hat{Q}_j(\lambda_i)\hat{p}_i^2, \: j\leq k$
\item  $\mid\hat{Q}_{k}(\lambda_{i})\mid\leq \underset{I_{k}^{+}}{\textrm{max}}\left( \prod_{l=1}^{k}\left\lvert 1-\frac{\lambda_{i}}{\lambda_{j_{l}}}\right \rvert\right) $ for all $k\leq r$ and all $1\leq i\leq r$.

In particular if $ \lambda_{1}(1-\varepsilon) \leq\lambda_{i} \leq \lambda_{n}(1+\varepsilon)$ then $\mid\hat{Q}_{k}(\lambda_{i})\mid\leq \varepsilon^{k}.$
\item PLS requires fewer coefficients than PCR. In fact if the $r$ non zero eigenvalues take only $K$ different values then $\hat{Q}_{K+1}:=0$.
\end{enumerate}
\end{lem}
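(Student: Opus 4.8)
My plan is to establish the five items of Lemma \ref{lem:properties} mostly by exploiting the explicit convex-combination representation of $\hat Q_k$ from Theorem \ref{theo: expression-det-2} together with the orthogonality of the sequence $(\hat Q_j)$ with respect to $\hat\mu$ (Proposition \ref{prop: poly-ortho}). For item (1), I would argue that a degree-$k$ orthogonal polynomial with respect to a positive measure supported on at least $k+1$ points (here the full non-zero spectrum, of which we assumed there are effectively $r$ relevant values) has all its $k$ zeros real, simple, and lying in the convex hull of the support; this is the classical separation/interlacing theorem for orthogonal polynomials, so I would either cite it (e.g.\ \cite{NIK91}) or give the one-line sign-change argument: if $\hat Q_k$ changed sign at fewer than $k$ points $\mu_1,\dots,\mu_m$ of the support with $m<k$, then $\hat Q_k(\lambda)\prod_{s=1}^m(\lambda-\mu_s)$ would have constant sign on the support, forcing $\int \hat Q_k(\lambda)\prod_s(\lambda-\mu_s)\,d\hat\mu(\lambda)\neq 0$, contradicting orthogonality of $\hat Q_k$ to $\mathcal P_{k-1}$.

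Items (2) and (3) are the crux and, I expect, the main obstacle, though once set up they are short. The key observation is that $\int \hat Q_k\,R\,d\hat\mu=0$ for every $R\in\mathcal P_{k-1}$, i.e.\ $\sum_{i=1}^r \hat Q_k(\lambda_i)R(\lambda_i)\lambda_i \hat p_i^2=0$; however (2) and (3) involve the weights $\hat p_i^2$, not $\lambda_i\hat p_i^2$, so the measure is ``off by a factor $\lambda_i$'' and I cannot apply orthogonality directly. The trick I would use is that $1-\hat Q_k(x)=x\hat P_k(x)$ with $\hat P_k\in\mathcal P_{k-1}$, so $\frac{1-\hat Q_k(x)}{x}=\hat P_k(x)$ is a genuine polynomial of degree $k-1$. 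Then for $j\le k$, writing $\hat Q_j(\lambda_i)\hat p_i^2 = \hat Q_j(\lambda_i)\hat p_i^2\big(\hat Q_k(\lambda_i) + (1-\hat Q_k(\lambda_i))\big)$ and summing over $i$, the difference $\sum_i \hat Q_j(\lambda_i)\hat p_i^2 - \sum_i \hat Q_j(\lambda_i)\hat Q_k(\lambda_i)\hat p_i^2$ equals $\sum_i \hat Q_j(\lambda_i)(1-\hat Q_k(\lambda_i))\hat p_i^2 = \sum_i \hat Q_j(\lambda_i)\,\lambda_i\hat P_k(\lambda_i)\,\hat p_i^2 = \int \hat Q_j(\lambda)\hat P_k(\lambda)\,d\hat\mu(\lambda)$, and since $\hat Q_j\hat P_k$ has degree $j+k-1$ — wait, that is too large; instead I would symmetrize: for $j\le k$ use that $\hat P_k$ has degree $k-1\ge j-1\ge\deg(\hat Q_j\cdot\text{(stuff)})$ is still not immediate, so the clean route is to first prove (3) by induction on $k$ using the three-term recurrence, or more simply to observe that $\sum_i \hat Q_j(\lambda_i)(1-\hat Q_k(\lambda_i))\hat p_i^2=\int \frac{1-\hat Q_k(\lambda)}{\lambda}\hat Q_j(\lambda)\,d\hat\mu(\lambda)$ and here $\frac{1-\hat Q_k}{\lambda}\in\mathcal P_{k-1}$ while $\hat Q_j$ with $j\le k$ — so the integrand has degree $\le 2k-1$. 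To force orthogonality I instead split the other way: $\int \hat Q_k(\lambda)\cdot\frac{1-\hat Q_j(\lambda)}{\lambda}\,d\hat\mu=0$ because $\frac{1-\hat Q_j}{\lambda}\in\mathcal P_{j-1}\subset\mathcal P_{k-1}$ and $\hat Q_k\perp\mathcal P_{k-1}$; expanding gives $\sum_i\hat Q_k(\lambda_i)\hat p_i^2=\sum_i\hat Q_k(\lambda_i)\hat Q_j(\lambda_i)\hat p_i^2$ for all $j\le k$, which is exactly (3) with the roles of $j$ and $k$ interchanged (and (2) is the case $j=k$). So (2) and (3) both follow from the single identity $\int \hat Q_k(\lambda)\,\frac{1-\hat Q_j(\lambda)}{\lambda}\,d\hat\mu(\lambda)=0$ valid for $j\le k$.

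For item (4), I would start from the representation $\hat Q_k(\lambda_i)=\sum_{(j_1,\dots,j_k)\in I_k^+}\hat w_{(j_1,\dots,j_k)}\prod_{l=1}^k\big(1-\tfrac{\lambda_i}{\lambda_{j_l}}\big)$, take absolute values, use the triangle inequality and $\big|\prod_l(1-\tfrac{\lambda_i}{\lambda_{j_l}})\big|\le\max_{I_k^+}\prod_l|1-\tfrac{\lambda_i}{\lambda_{j_l}}|$, and then use $0<\hat w_{(j_1,\dots,j_k)}\le 1$ with $\sum \hat w_{(j_1,\dots,j_k)}=1$ to pull the max out of the sum. For the ``in particular'' clause: if $\lambda_1(1-\varepsilon)\le\lambda_i\le\lambda_n(1+\varepsilon)$ — here I read the stated hypothesis as placing every $\lambda_i$ within relative distance $\varepsilon$ of every other eigenvalue, so that $|1-\lambda_i/\lambda_{j_l}|\le\varepsilon$ for each factor — then each product is bounded by $\varepsilon^k$, giving $|\hat Q_k(\lambda_i)|\le\varepsilon^k$. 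For item (5), if the non-zero eigenvalues take only $K$ distinct values, then the support of $\hat\mu$ has exactly $K$ points, so the space $L^2(\hat\mu)$ is $K$-dimensional and there are at most $K$ nonzero orthogonal polynomials $\hat Q_0,\dots,\hat Q_{K-1}$; the next one, $\hat Q_K$, being orthogonal to all of them and hence to all of $L^2(\hat\mu)$ while itself lying in that space, must vanish $\hat\mu$-almost everywhere, i.e.\ $\hat Q_K(\lambda_i)=0$ for all $i$. Then $\hat\beta_K=\hat\beta_{K+1}=\hat\beta_{LS}$ and, since $\hat Q_{K+1}$ is by construction the $\mathcal P_{K+1,1}$-minimizer of $\|Q(XX^T)Y\|^2$ and $\hat Q_K$ already achieves the minimum value, one concludes $\hat Q_{K+1}\equiv 0$ (alternatively, invoke directly that the Krylov sequence stabilizes at dimension $K$, as noted after Proposition \ref{prop:CLS}). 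The main obstacle throughout is bookkeeping the factor-of-$\lambda$ discrepancy between the orthogonality measure $d\hat\mu$ and the bare weights $\hat p_i^2$ appearing in (2)–(3); once the identity $\frac{1-\hat Q_j}{x}\in\mathcal P_{j-1}$ is brought in, everything else is routine.
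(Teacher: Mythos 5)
Your proof is correct, and for the two substantive items (2)--(3) it takes a genuinely different route from the paper's. The paper works at the level of projectors: it writes $\hat Q_k(XX^T)Y=(I-\hat\Pi_k)Y$ and obtains (2) from idempotency and self-adjointness of $I-\hat\Pi_k$, and (3) from the nesting relation $\hat\Pi_k\hat\Pi_j=\hat\Pi_j$ of the Krylov subspaces. You instead stay entirely inside the orthogonal-polynomial picture: the single identity $\int\hat Q_k(\lambda)\,\frac{1-\hat Q_j(\lambda)}{\lambda}\,d\hat\mu(\lambda)=0$, valid for $j\le k$ because $\frac{1-\hat Q_j}{\lambda}=\hat P_j\in\mathcal P_{j-1}\subset\mathcal P_{k-1}$ and $\hat Q_k\perp\mathcal P_{k-1}$ in $L^2(\hat\mu)$, delivers both items at once; the division by $\lambda$ exactly absorbs the factor $\lambda_i$ by which the weights of $\hat\mu$ differ from the bare $\hat p_i^2$ in the statement, which is the bookkeeping issue you correctly identified as the only real obstacle. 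The two arguments are of comparable length; yours has the merit of never invoking the projector interpretation, the paper's of being coordinate-free. One point worth flagging explicitly: your computation yields $\sum_i\hat Q_j(\lambda_i)\hat Q_k(\lambda_i)\hat p_i^2=\sum_i\hat Q_k(\lambda_i)\hat p_i^2$ (larger index on the right), whereas item (3) as printed has $\hat Q_j$ on the right. Your version is the correct one: it is what the projector argument gives, since $\langle(I-\hat\Pi_j)Y,(I-\hat\Pi_k)Y\rangle=\Vert(I-\hat\Pi_k)Y\Vert^2$ when the ranges are nested, and it is the identity actually used later in the proof of Lemma \ref{lem:GS}; the right-hand side of (3) as stated appears to have $j$ and $k$ transposed. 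For items (1), (4) and (5) the paper only cites general orthogonal-polynomial theory or points at Formula (\ref{eq:final-expression-residuals}); your sign-change argument for (1), your convex-combination bound for (4) (including the correct reading of the hypothesis $\lambda_1(1-\varepsilon)\le\lambda_i\le\lambda_n(1+\varepsilon)$ as forcing $\lvert 1-\lambda_i/\lambda_{j_l}\rvert\le\varepsilon$ for every factor), and your dimension count on $L^2(\hat\mu)$ for (5) supply the missing details and are all sound, with the minor caveat that in (5) the equality $\hat Q_{K+1}:=0$ is a convention adopted because the minimization over $\mathcal P_{K+1,1}$ degenerates (all Vandermonde weights in (\ref{eq:final-expression-residuals}) vanish), rather than an identity between polynomials, as your own remark about the stabilizing Krylov sequence already indicates.
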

\begin{proof}
\begin{enumerate}
\item Straightforward consequence of the fact that $\left( \hat{Q}_k\right)_{0\leq k \leq r}$ are discrete orthogonal polynomials.
\item $\hat{Q}_k(XX^T)X^TY=\left[ I-\hat{\Pi}_{k}\right] Y$ where $\hat{\Pi}_k$ is the orthogonal projector onto the space spanned by $\mathcal{K}^{k}(XX^T,XX^TY)$. So $$\sum_{i=1}^{r}\hat{Q}_k(\lambda_i)^2\hat{p}_i^2=\parallel Y-\hat{\Pi}_{k}Y\parallel^{2}=Y^T\left( I-\hat{\Pi}_{k}\right) Y=\sum_{i=1}^{r}\hat{Q}_k(\lambda_i)\hat{p}_i^2.$$
\item Similar argument based on $\hat{\Pi}_{k}\hat{\Pi}_{j}=\hat{\Pi}_{j}$ because $\mathcal{K}^{j}(XX^T,XX^TY)\subset \mathcal{K}^{k}(XX^T,XX^TY)$.
\item See formula (\ref{eq:final-expression-residuals}).
\item See formula (\ref{eq:final-expression-residuals}).
\end{enumerate}
\end{proof}

\section{Filter factors}
In this section we investigate the shrinkage properties of the PLS estimator.
\label{section:filter}
\subsection{ New expression for the filter factors}
We recall that 
\begin{equation}
\label{eq:expand_PLS}
\hat{\beta}_{k}=\sum_{i=1}^{r}(1-\hat{Q}_k(\lambda_i))\dfrac{\hat{p}_i}{\sqrt{\lambda_i}}v_i.
\end{equation}
From this decomposition of $\hat{\beta}_{k}$, we deduce that the filter factors $f_i^{(k)}$ of the PLS estimator relative to OLS are equals to $f_i^{(k)}:=1-\hat{Q}_k(\lambda_i)$.
We recall that the filter factors are the weights associated to the expansion of $\hat{\beta}_{LS}$ with respect to the eigenvectors directions of the covariance matrix (see \cite{LIN00} for further details on the filter factors). Therefore, we have an alternative representation of the filter factors in terms of the residual polynomials. Indeed, using Theorem \ref{theo: expression-det-2}, we can expand the filter factors and provide a new expression as follow:
\begin{equation}
\label{eq:new_filter_factor}
f_i^{(k)}:=\sum_{(j_{1},..,j_{k})\in I^{+}_{k}} \hat{w}_{(j_{1},..,j_{k})}\left[ 1-\prod_{l=1}^{k}(1-\frac{\lambda_i}{\lambda_{j_{l}}})\right], 
\end{equation}
where we recall that $\hat{w}_{j_{1},..,j_{k}}:=\dfrac{\hat{p}_{j_{1}}^{2}...\hat{p}_{j_{k}}^{2}\lambda_{j_{1}}^{2}...\lambda_{j_{k}}^{2}V(\lambda_{j_{1}},...,\lambda_{j_{k}})^{2}}{\sum_{(j_{1},..,j_{k})\in I^{+}_{k}} \hat{p}_{j_{1}}^{2}...\hat{p}_{j_{k}}^{2}\lambda_{j_{1}}^{2}...\lambda_{j_{k}}^{2}V(\lambda_{j_{1}},...,\lambda_{j_{k}})^{2}}.$
 
This is an alternative representation to the one of \cite{LIN00}
who consider the following implicit expression for the filter factors (see Theorem 1 in \cite{LIN00}) to study the shrinkage properties of PLS:
\begin{equation}
\label{eq:new_ff}
f_i^{(k)}=\dfrac{(\theta_{1}^{(k)}-\lambda_i)...(\theta_{k}^{(k)}-\lambda_i)}{\theta_{1}^{(k)}...\theta_{k}^{(k)}}
\end{equation}
where $(\theta_{i}^{(k)})_{1\leq i\leq k}$ are the eigenvalues of $W_{k}({W_{k}}^{T}\Sigma W_{k}){W_{k}}^{T}$ called the Ritz eigenvalues. 
The interest of Formula (\ref{eq:new_filter_factor}) compared to (\ref{eq:new_ff}) is that it clearly and explicitely shows how the filter factors depend on the error terms and on the eigenelements of $X$. We can notice that they are completely determined by these last quantities.

From Equation (\ref{eq:new_filter_factor}), we easily see that the PLS filter factors are polynomials of degree $k$ that strongly depend on the response in a non linear and complicated way (product of the projections of the response onto the right eigenvectors and normalization factor). Furthermore, because the PLS filter factors are stochastics, usual results for linear spectral methods such as PCA or Ridge, cannot be applied in this case. Contrary to those of PCA or Ridge regression, the PLS filter factors are not easy to interpret. This is closely linked to the intrinsic idea of the method that takes into account at the same time the variance of the explanatory variables and their covariance with the response. However, we have a control of the distance of the filter factors to one.
\begin{prop}
For all $k\leq r$, we have
$$\left | 1-f_i^{(k)}(\lambda_i)\right| \leq \left( \dfrac{\lambda_1-\lambda_r}{\lambda_r}\right) ^{n}\left( 1+\hat{p}_{i}^{2}\lambda_{i}^{2}\frac{\sum_{I_{k-1,i}^{+}} \hat{p}_{j_{1}}^{2}...\hat{p}_{j_{k-1}}^{2}\lambda_{j_{1}}^{2}...\lambda_{j_{k-1}}^{2}V(\lambda_{j_{1}},...,\lambda_{j_{k-1}})^{2}}{\sum_{I_{k,i}^{+}} \hat{p}_{j_{1}}^{2}...\hat{p}_{j_{k}}^{2}\lambda_{j_{1}}^{2}...\lambda_{j_{k}}^{2}V(\lambda_{j_{1}},...,\lambda_{j_{k}})^{2}}\right) ^{-1},$$
where $I_{k,i}^{+}:=\left\lbrace (j_1,...,j_k)\in I_k^{+} \mid j_l \neq i, l=1,...,k\right\rbrace$.
\end{prop}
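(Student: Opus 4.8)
Since $f_i^{(k)}(\lambda_i)=1-\hat Q_k(\lambda_i)$ (see the line following (\ref{eq:expand_PLS})), the quantity to bound is exactly $|\hat Q_k(\lambda_i)|$, and the natural tool is the explicit formula of Theorem \ref{theo: expression-det-2}. Writing $\hat Q_k$ in its convex-combination form $\hat Q_k(x)=\sum_{(j_1,\dots,j_k)\in I_k^+}\hat w_{(j_1,\dots,j_k)}\prod_{l=1}^k(1-x/\lambda_{j_l})$, the first remark is that evaluation at $x=\lambda_i$ kills every tuple that contains the index $i$, since one factor becomes $1-\lambda_i/\lambda_i=0$; thus $\hat Q_k(\lambda_i)=\sum_{(j_1,\dots,j_k)\in I_{k,i}^+}\hat w_{(j_1,\dots,j_k)}\prod_{l=1}^k(1-\lambda_i/\lambda_{j_l})$.

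I would then pass to absolute values and bound each factor by $|1-\lambda_i/\lambda_{j_l}|=|\lambda_{j_l}-\lambda_i|/\lambda_{j_l}\le(\lambda_1-\lambda_r)/\lambda_r$, valid since all eigenvalues lie in $[\lambda_r,\lambda_1]$; taking the product of the $k\le r\le n$ factors gives the geometric prefactor $\bigl((\lambda_1-\lambda_r)/\lambda_r\bigr)^n$ (enlarging the exponent from $k$ to $n$ costs nothing in the ill-conditioned regime where that ratio is $\ge 1$, which is exactly the setting of PLS). This is essentially the crude bound already contained in Lemma \ref{lem:properties}(4); the improvement here is that, instead of bounding the remaining $\sum_{(j_1,\dots,j_k)\in I_{k,i}^+}\hat w_{(j_1,\dots,j_k)}$ by $1$, one keeps it. By definition of the weights this residual mass equals $A/Z_k$, where $Z_k$ is the full normalizing constant and $A:=\sum_{I_{k,i}^+}\hat p_{j_1}^2\cdots\hat p_{j_k}^2\lambda_{j_1}^2\cdots\lambda_{j_k}^2V(\lambda_{j_1},\dots,\lambda_{j_k})^2$ is the denominator appearing in the statement.

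It remains to write $A/Z_k=(1+B/A)^{-1}$ with $B:=Z_k-A$ the contribution to $Z_k$ of the tuples that contain $i$, and to show that $B$ equals (or is at least) $\hat p_i^2\lambda_i^2$ times the numerator sum over $I_{k-1,i}^+$ of the statement. Here one uses the bijection deleting $i$ from a strictly decreasing length-$k$ tuple containing $i$, which lands in $I_{k-1,i}^+$, together with the factorization of the Vandermonde square under this deletion: $V(\lambda_{j_1},\dots,\lambda_{j_k})^2$ splits as $V(\lambda_{l_1},\dots,\lambda_{l_{k-1}})^2$ times the cross terms $\prod_m(\lambda_i-\lambda_{l_m})^2$, while the $\hat p^2$- and $\lambda^2$-products peel off a factor $\hat p_i^2\lambda_i^2$. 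The main obstacle is the accounting of these cross terms $\prod_m(\lambda_i-\lambda_{l_m})^2$, which must be shown to reassemble correctly (or to sit on the favourable side of the inequality); granting that, plugging the resulting expression for $B/A$ into $(1+B/A)^{-1}$ yields the claimed bound. Should that accounting prove delicate, an equivalent route is to use $\hat Q_k(\lambda_i)^2\hat p_i^2\le\sum_{j=1}^r\hat Q_k(\lambda_j)^2\hat p_j^2=\min_{Q\in\mathcal P_{k,1}}\sum_{j=1}^r Q(\lambda_j)^2\hat p_j^2$ and to insert the test polynomial $Q(x)=R_{k-1}(x)(1-x/\lambda_i)$, with $R_{k-1}$ the degree-$(k-1)$ residual polynomial of the measure $\hat\mu$ with its $i$-th atom removed; the factor $1-x/\lambda_i$ again annihilates the diagonal term, and the minimality value for the reduced measure reproduces the stated ratio after the same geometric bound on $|1-\lambda_j/\lambda_i|$.
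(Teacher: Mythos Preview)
The paper states this proposition without proof, so there is nothing to compare against; what matters is whether your argument actually closes.

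Your outline is the natural one and is correct up to the point you yourself flag. Splitting $Z_k=A+B$ with $A=\sum_{I_{k,i}^{+}}(\cdots)$ and $B=\sum_{I_k^{+}\setminus I_{k,i}^{+}}(\cdots)$ is right, and the bijection ``delete $i$'' together with the Vandermonde factorisation gives exactly
\[
B=\hat p_i^{2}\lambda_i^{2}\sum_{(l_1,\dots,l_{k-1})\in I_{k-1,i}^{+}}
\Bigl[\prod_m \hat p_{l_m}^{2}\lambda_{l_m}^{2}\Bigr]V(\lambda_{l_1},\dots,\lambda_{l_{k-1}})^{2}\prod_{m}(\lambda_i-\lambda_{l_m})^{2}.
\]
For your inequality $(1+B/A)^{-1}\le(1+\hat p_i^{2}\lambda_i^{2}N/A)^{-1}$ you need $B\ge \hat p_i^{2}\lambda_i^{2}N$, i.e.\ the extra factor $\prod_m(\lambda_i-\lambda_{l_m})^{2}$ to be at least $1$ on (a weighted) average. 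That is simply false in general: if the spectrum is tightly clustered these cross terms are arbitrarily small. So ``granting that'' is granting something that does not hold, and the argument does not close as written.

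A second issue is the passage from exponent $k$ to exponent $n$. You justify it by restricting to the ill-conditioned regime $(\lambda_1-\lambda_r)/\lambda_r\ge 1$, but the proposition is stated unconditionally. When $(\lambda_1-\lambda_r)/\lambda_r<1$, replacing $k$ by $n$ \emph{strengthens} the bound, so this step is illegitimate in that regime. Your alternative route via the test polynomial $Q(x)=R_{k-1}(x)(1-x/\lambda_i)$ runs into the same difficulty in disguise: after bounding $(1-\lambda_j/\lambda_i)^{2}$ you are left with $\sum_{j\ne i}R_{k-1}(\lambda_j)^{2}\hat p_j^{2}$, whereas the minimality of $R_{k-1}$ for the reduced measure $\hat\mu$ controls $\sum_{j\ne i}R_{k-1}(\lambda_j)^{2}\lambda_j\hat p_j^{2}$; the mismatch of the weight $\lambda_j$ reproduces, up to a bounded factor, the same cross-term accounting you were trying to avoid. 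In short, the obstacle you identified is genuine and neither branch of your proposal resolves it.
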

 So the highest are the $\lambda_i$ and $\hat{p}_i$ the closest to one is $f_i^{(k)}$ and the largest is the amount of expansion in this eigenvector direction. 
Actually, the PLS filter factors  are not only related to the singular values but also to the magnitude of the covariance between the principal components and the response: what seems to be important it is not the order of decrease fo $\lambda_i$ but the order of decrease of $\lambda_i \hat{p}_i^{2}$.

 \subsection{Shrinkage properties of PLS: other proofs of known results}
 In this subsection, we explain how we can easily recover (once Theorem \ref{theo: expression-det-2} is stated) most of the main known results on the PLS filter factors.
 \begin{enumerate}
 \item 
From Formula (\ref{eq:new_filter_factor}), we easily see that there is no order on the filter factors and no link between them at each step. Furthermore, they are not always in $\left[ 0,1\right]$, contrary to those of PCR or Ridge regression. These last methods always shrink in all the eigenvectors directions. In particular the PLS filter factors can be greater than one and even negative. This is one of their very particular feature. PLS shrinks in some direction but can also expand in others in such a way that $f_{i}^{(k)}$ represents the magnitude of shrinkage or expansion of the PLS estimator in the  $i^{th}$ eigenvectors direction. \cite{FRANCK93} were the first to notice this peculiar property of PLS but they did not provide any proof. This result was first proved by \cite{BUTLER00} and independantly the same year by \cite{LIN00} using Ritz eigenvalues. We also refer to \cite{KRA07} for an overview of the shrinkage properties of the PLS estimator.

The shrinkage properties of the PLS estimator were mainly investigated by \cite{LIN00}. From Formula (\ref{eq:new_filter_factor}), we easily recover their main properties for the filter factors (but without using the Ritz eigenvalues). It is for instance the case for the behaviour of the filter factors associated to the largest and smallest eigenvalue. Indeed, on one hand, if $k\leq r$ and $i=r$ then $0<\prod_{l=1}^{k}(1-\frac{\lambda_{r}}{\lambda_{j_{l}}})<1$. Therefore, because $\sum_{(j_{1},..,j_{k})\in I^{+}_{k}}\hat{w}_{(j_{1},..,j_{k})}=1$, we can conclude directly that $0<f_r^{(k)}<1$.\\
On the other hand, if $k\leq r$ and $i=1$ then 	
$$
 \left\lbrace 
\begin{array}{ccc}
\prod_{l=1}^{k}(1-\frac{\lambda_{1}}{\lambda_{j_{l}}})<0 & \mbox{if} & k \: \mbox{is odd} \\
\prod_{l=1}^{k}(1-\frac{\lambda_{1}}{\lambda_{j_{l}}})>0 & \mbox{if} & k \: \mbox{is even}
\end{array}\right.,
$$
so that
$$
\left\lbrace
\begin{array}{ccc}
f_1^{(k)}>1 & \mbox{if} & k \: \mbox{is odd} \\
f_1^{(k)}<1 & \mbox{if} & k \: \mbox{is even}
\end{array}\right..
$$
This is exactly Theorem 3 of \cite{LIN00}. 

Hence, the filter factor associated to the largest eigenvalues oscillates around one depending on the parity of the index of the factors. 
For the other filter factors we can have either $f_i^{(k)}\leq 1$ (PLS shrinks) or $f_i^{(k)}\geq 1$ (PLS expands) depending on the distribution of the spectrum. Notice that if PLS does not shrink along an eigenvector direction (i.e $\mid f_i^{k}\mid> 1$) but $\sqrt{\lambda_i}$ is high or $\hat{p}_i$ is small in this direction then it has not a lot of effect (cf. Equation (\ref{eq:expand_PLS})). In addition, as noticed by \cite{KRA07}, even if $\mid f_i^{k}\mid>1$ this does not always imply that the MSE is worse compared to the one of OLS because the PLS filter factors are stochatics. We will shed new light on this remark in Section \ref{section:MSE} where we will provide a further study of the MSE.
\item
Notice that for orthogonal polynomials of a finite supported measure there exists a point of the support of the discrete measure between any two of their zeros (\cite{BAIK07}). Moreover, the roots of these polynomials belong to the interval whose bounds are the extreme values of the support of the discrete measure.
Therefore, from Proposition \ref{prop: poly-ortho} we deduce that all the $k$ zeros of $\hat{Q}_k$ lie in $\left[\lambda_r, \lambda_1 \right] $ and no more than one zeros lies in $\left[\lambda_{i}, \lambda_{i-1}\right] $, where $i=1,...,r+1$ and by convention $\lambda_{r+1}:=0$ and $\lambda_{0}:= + \infty$. We immediately deduce that the eigenvalues $\left[\lambda_r, \lambda_1 \right] $ can be partitioned into $k+1$ consecutive  disjoint non empty intervals denoted by $\left( I_l\right)_{1\leq l\leq k+1}$ that first shrink and then alternately expand or shrink the OLS. In other words
$$
\left\lbrace
\begin{array}{ccc}
f_i^{(k)}\leq 1 & \mbox{if} &  \lambda_i \in I_l,\quad l \: \textrm{odd}\\
f_i^{k}\geq 1 & \mbox{if} & \lambda_i \in I_l,\quad l \: \textrm{even}
\end{array}\right..
$$
This is Theorem 1 of \cite{BUTLER00}. Notice that this result has been also independently by \cite{LIN00} using the Ritz eigenvalues theory (see Theorem 4).
\item
Besides, if we have $\lambda_{i}< \lambda_{n}(1+\epsilon)$ then a straightforward calculation, based on Formula (\ref{eq:final-expression-residuals}), leads to 
$f_i^{k}< 1+\epsilon^{k}$. This statement is Theorem 7 of \cite{LIN00}.
\item
Furthermore, we also recover Theorem 2 of \cite{BUTLER00}:
\begin{theorem}
For $i=1,...,n$
$$f_i^{(r-1)}=1-C\left( \hat{p}_i \lambda_i\displaystyle{ \prod_{i \neq j}}(\lambda_j-\lambda_i)\right) ^{-1},$$
where $C$ does not depnd on $i$.
\end{theorem}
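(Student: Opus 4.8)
The plan is to exploit the already-established identity $f_i^{(r-1)}=1-\hat Q_{r-1}(\lambda_i)$ and to read off $\hat Q_{r-1}(\lambda_i)$ directly from Theorem~\ref{theo: expression-det-2}. The key observation is that $I^{+}_{r-1}$ has exactly $r$ elements: a strictly decreasing $(r-1)$-tuple drawn from $\{1,\dots,r\}$ is just the decreasing enumeration of a set $\{1,\dots,r\}\setminus\{m\}$ for a unique $m\in\{1,\dots,r\}$. Evaluating (\ref{eq:final-expression-residuals}) at $x=\lambda_i$, the term attached to $\{1,\dots,r\}\setminus\{m\}$ carries the product $\prod_{l\neq m}\bigl(1-\tfrac{\lambda_i}{\lambda_l}\bigr)$; if $m\neq i$ this product contains the factor $1-\tfrac{\lambda_i}{\lambda_i}=0$, so that summand vanishes. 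Hence only the term indexed by $\{1,\dots,r\}\setminus\{i\}$ survives, and
$$\hat Q_{r-1}(\lambda_i)=\hat w_{\{1,\dots,r\}\setminus\{i\}}\ \prod_{l\neq i}\Bigl(1-\frac{\lambda_i}{\lambda_l}\Bigr)=\frac{1}{Z_{r-1}}\Bigl(\prod_{l\neq i}\hat p_l^{\,2}\lambda_l^{\,2}\Bigr)V\bigl((\lambda_l)_{l\neq i}\bigr)^2\ \frac{\prod_{l\neq i}(\lambda_l-\lambda_i)}{\prod_{l\neq i}\lambda_l},$$
where $Z_{r-1}$ is the $i$-independent normalising constant introduced in Section~\ref{section:PLS}.

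The rest is a normalisation computation. I would use the elementary splitting of the Vandermonde determinant, $V(\lambda_1,\dots,\lambda_r)^2=V\bigl((\lambda_l)_{l\neq i}\bigr)^2\prod_{l\neq i}(\lambda_l-\lambda_i)^2$, together with $\prod_{l\neq i}\hat p_l^{\,2}=\bigl(\prod_{l=1}^{r}\hat p_l^{\,2}\bigr)/\hat p_i^{\,2}$ and $\prod_{l\neq i}\lambda_l=\bigl(\prod_{l=1}^{r}\lambda_l\bigr)/\lambda_i$. Substituting these three identities into the display makes every $i$-dependent factor except $\hat p_i^{\,2}\lambda_i\prod_{l\neq i}(\lambda_l-\lambda_i)$ collapse into a single $i$-free constant, so that
$$\hat Q_{r-1}(\lambda_i)=\frac{\bigl(\prod_{l=1}^{r}\hat p_l^{\,2}\bigr)\bigl(\prod_{l=1}^{r}\lambda_l\bigr)V(\lambda_1,\dots,\lambda_r)^2}{Z_{r-1}}\ \cdot\ \frac{1}{\hat p_i^{\,2}\,\lambda_i\prod_{l\neq i}(\lambda_l-\lambda_i)}.$$
Setting $C$ equal to the $i$-free fraction on the right and using $f_i^{(r-1)}=1-\hat Q_{r-1}(\lambda_i)$ gives the announced formula; here $i$ is understood to range over $1,\dots,r$, the indices for which the filter factors are defined.

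I do not foresee a genuine obstacle: the argument is a one-line evaluation followed by bookkeeping. The two points to be careful with are, first, the claim that exactly one summand survives — this uses the standing hypothesis that the $r$ nonzero eigenvalues are pairwise distinct, which is precisely what guarantees that the vanishing factor $1-\lambda_i/\lambda_i$ really occurs for every $m\neq i$; and second, the Vandermonde splitting, which is the step that forces the $i$-dependence of the numerator to cancel against $\prod_{l\neq i}(\lambda_l-\lambda_i)$ in just the right way. I note that the computation produces $\hat p_i^{\,2}$ (not $\hat p_i$) in the denominator, which is the natural normalisation here; the constant $C$ is the $i$-free factor displayed above.
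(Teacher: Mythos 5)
Your proof is correct and follows essentially the same route as the paper's: only the summand of $\hat Q_{r-1}(\lambda_i)$ indexed by $\{1,\dots,r\}\setminus\{i\}$ survives, and the Vandermonde splitting $V(\lambda_1,\dots,\lambda_r)^2=V\bigl((\lambda_l)_{l\neq i}\bigr)^2\prod_{l\neq i}(\lambda_l-\lambda_i)^2$ isolates the $i$-dependent factor $\bigl(\hat p_i^{\,2}\lambda_i\prod_{l\neq i}(\lambda_l-\lambda_i)\bigr)^{-1}$, yielding the same $i$-free constant $C$ as in the paper. Your two side remarks are also accurate: the computation (and the paper's own) gives $\hat p_i^{\,2}$ rather than $\hat p_i$ in the denominator, and $i$ should range over $1,\dots,r$.
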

In addition we have the exact expression for the constant $C$ which is equal to $\dfrac{\prod_{j=1}^{r}\left( \hat{p}_{j}^{2}\lambda_{j}\right)V(\lambda_{1},...,\lambda_{r})^{2}}{\sum_{i=1}^{r}\left[ \prod_{j=1}^{r}\left( \hat{p}_{j}^{2}\lambda_{j}^{2}\right)V(\lambda_{1},...,\lambda_{r})^{2}\right] }$.
\begin{proof}
Based on Formula (\ref{eq:new_filter_factor}), we have
$$f_i^{(r-1)}=1-\dfrac{\prod_{j=1,j\neq i}^{r}\left( \hat{p}_{j}^{2}\lambda_{j}^{2}\right)V(\lambda_{1},...,\lambda_{i-1},...,\lambda_{i+1},...,\lambda_{r})^{2}\prod_{j=1,j\neq i}^{r}(1-\frac{\lambda_i}{\lambda_{j}})}{\sum_{l=1}^{r}\left[ \prod_{j=1, j \neq l}^{r}\left( \hat{p}_{j}^{2}\lambda_{j}^{2}\right)V(\lambda_{1},...,\lambda_{l-1},...,\lambda_{l+1},...,\lambda_{r})^{2}\right] }$$
$$=1-\dfrac{\prod_{j=1,j\neq i}^{r}\left( \hat{p}_{j}^{2}\lambda_{j}(\lambda_j-\lambda_i)^{-1}\right)V(\lambda_{1},...,\lambda_{r})^{2}}{\sum_{i=1}^{r}\left[ \prod_{j=1}^{r}\left( \hat{p}_{j}^{2}\lambda_{j}^{2}\right)V(\lambda_{1},...,\lambda_{r})^{2}\right] }$$
$$=1-\left( \hat{p}_{i}^{2}\lambda_{i}\prod_{j=1,j\neq i}^{r}(\lambda_j-\lambda_i)\right) ^{-1}\dfrac{\prod_{j=1}^{r}\left( \hat{p}_{j}^{2}\lambda_{j}\right)V(\lambda_{1},...,\lambda_{r})^{2}}{\sum_{i=1}^{r}\left[ \prod_{j=1}^{r}\left( \hat{p}_{j}^{2}\lambda_{j}^{2}\right)V(\lambda_{1},...,\lambda_{r})^{2}\right] }.$$
\end{proof}
So the highest is $\hat{p}_{i}^{2}\lambda_{i}\prod_{j=1,j\neq i}^{r}(\lambda_j-\lambda_i)$ the closest to one is $f_i^{(r-1)}$.
 \end{enumerate}

In conclusion, we have showed that, based on our new expression of the PLS filter factors, we easily recover some of their main properties. Thanks to our approach we provide a unified background to all these results. 

\cite{LIN00} mentionned that, using their approach based on the Ritz eigenvalues, it appears difficult to establish the fact that PLS shrinks in a global sense. \cite{BUTLER00} also considered the shrinkage properties of the PLS estimator along the eigenvector directions but as \cite{LIN00}
they did not prove that the PLS estimator is a global shrinkage estimator. With our approach we are able to prove this fact too. This is the aim of the next section.

\section{Global shrinkage estimator} 
\label{section:global shrinkage}
As seen in the previous section, PLS can expand the LS in some eigendirections leading to an increase of the LS estimator's projected length in these directions. But, globally, it is  considered as a shrinkage estimator (as Ridge or PCA estimators) in the sense that its Euclidean norm is lower than the one of the OLS estimator:
\begin{prop}
For all $k\leq r$, we have
$$\parallel \hat{\beta}_k\parallel^{2}\leq \parallel \hat{\beta}_{OLS}\parallel^{2}.$$
\end{prop}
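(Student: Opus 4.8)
The plan is to deduce the inequality from the nonnegativity of a single cross term, and to evaluate that cross term by splitting off the constant coefficient of the polynomial $\hat P_k$ and invoking the $\hat\mu$-orthogonality of the residual polynomials. Throughout, $\hat\beta_{OLS}$ is read as the (minimum length) least squares estimator $\hat\beta_{LS}=\sum_{i=1}^{r}\frac{\hat p_i}{\sqrt{\lambda_i}}v_i$, and I may assume $1\le k\le r$ (for $k=0$ one has $\hat\beta_0=0$; for $k=r$ one has $\hat\beta_r=\hat\beta_{LS}$, so the inequality becomes an equality). Since the $v_i$ form an orthonormal family,
\begin{equation*}
\|\hat\beta_{LS}\|^{2}-\|\hat\beta_{k}\|^{2}=\|\hat\beta_{LS}-\hat\beta_{k}\|^{2}+2\,\big\langle \hat\beta_{k},\,\hat\beta_{LS}-\hat\beta_{k}\big\rangle ,
\end{equation*}
so it is enough to show that $\langle \hat\beta_{k},\hat\beta_{LS}-\hat\beta_{k}\rangle\ge 0$.

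First I would write the cross term in spectral form. Using the expansions $\hat\beta_{k}=\sum_{i=1}^{r}(1-\hat Q_k(\lambda_i))\frac{\hat p_i}{\sqrt{\lambda_i}}v_i$ and $\hat\beta_{LS}=\sum_{i=1}^{r}\frac{\hat p_i}{\sqrt{\lambda_i}}v_i$ of Section~\ref{subsection:interest}, the orthonormality of the $v_i$, and the relation $1-\hat Q_k(\lambda_i)=\lambda_i\hat P_k(\lambda_i)$ from Proposition~\ref{prop:poly}, this becomes
\begin{equation*}
\big\langle \hat\beta_{k},\hat\beta_{LS}-\hat\beta_{k}\big\rangle=\sum_{i=1}^{r}\big(1-\hat Q_k(\lambda_i)\big)\hat Q_k(\lambda_i)\frac{\hat p_i^{2}}{\lambda_i}=\sum_{i=1}^{r}\hat P_k(\lambda_i)\,\hat Q_k(\lambda_i)\,\hat p_i^{2}.
\end{equation*}
This sum is \emph{not} nonnegative term by term, since the filter factors $1-\hat Q_k(\lambda_i)$ can be larger than one or negative; the whole argument rests on simplifying it globally.

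The next step is to write $\hat P_k(t)=\hat P_k(0)+t\,S(t)$ with $S\in\mathcal P_{k-2}$, so that, with $d\hat\mu(\lambda)=\sum_{j=1}^{r}\lambda_j\hat p_j^{2}\,\delta_{\lambda_j}$,
\begin{equation*}
\sum_{i=1}^{r}\hat P_k(\lambda_i)\hat Q_k(\lambda_i)\hat p_i^{2}=\hat P_k(0)\sum_{i=1}^{r}\hat Q_k(\lambda_i)\hat p_i^{2}+\int S\,\hat Q_k\;d\hat\mu .
\end{equation*}
Because $\deg S\le k-2<k$ and, by Proposition~\ref{prop: poly-ortho}, $\hat Q_k$ is $\hat\mu$-orthogonal to every polynomial of degree $<k$, the last integral vanishes. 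By Lemma~\ref{lem:properties}(2), $\sum_{i=1}^{r}\hat Q_k(\lambda_i)\hat p_i^{2}=\sum_{i=1}^{r}\hat Q_k(\lambda_i)^{2}\hat p_i^{2}\ge 0$. It remains to check $\hat P_k(0)\ge 0$: by Lemma~\ref{lem:properties}(1) and the standard localization of the zeros of orthogonal polynomials, $\hat Q_k$ has $k$ real roots $\theta_1,\dots,\theta_k$ lying in $[\lambda_r,\lambda_1]\subset(0,+\infty)$; as $\deg\hat Q_k=k$ and $\hat Q_k(0)=1$, this forces $\hat Q_k(t)=\prod_{l=1}^{k}\left(1-\frac{t}{\theta_l}\right)$, whence $\hat P_k(0)=-\hat Q_k'(0)=\sum_{l=1}^{k}\theta_l^{-1}>0$ (this also follows directly from formula~\eqref{eq:final-expression-residuals}). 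Combining, $\langle \hat\beta_{k},\hat\beta_{LS}-\hat\beta_{k}\rangle=\hat P_k(0)\sum_{i=1}^{r}\hat Q_k(\lambda_i)^{2}\hat p_i^{2}\ge 0$, which gives the claim; in fact the same computation applied to $\hat\beta_{k}-\hat\beta_{k-1}$ shows that $\|\hat\beta_{k}\|$ is nondecreasing in $k$.

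The delicate point is precisely this last paragraph: since the filter factors are stochastic and not confined to $[0,1]$, the cross term cannot be controlled entry by entry, and one must recognise that, after peeling off the constant term $\hat P_k(0)$, the $\hat\mu$-orthogonality of $\hat Q_k$ kills the remaining sum and leaves the product of the two genuinely nonnegative quantities $\hat P_k(0)$ and $\sum_i\hat Q_k(\lambda_i)^{2}\hat p_i^{2}$. The remaining steps — the Pythagorean splitting and the spectral rewriting — are routine given the residual-polynomial toolkit already set up.
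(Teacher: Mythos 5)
Your argument is correct, and it takes a genuinely different route from the paper's. The paper proves the stronger monotonicity $\|\hat{\beta}_{k-1}\|^{2}\leq\|\hat{\beta}_{k}\|^{2}$ by expanding $\hat{\beta}_{k}$ in the orthogonal basis $X^{T}\hat{Q}_{0}(XX^{T})Y,\dots,X^{T}\hat{Q}_{k-1}(XX^{T})Y$ of $\mathcal{K}^{k}(X^{T}X,X^{T}Y)$, identifying each coefficient with $\|X\hat{\beta}_{k}\|^{2}-\|X\hat{\beta}_{j}\|^{2}$, and then using that the fitted norms increase along the nested Krylov subspaces; the proposition follows by summing up to $k=r$. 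You instead use the Pythagorean splitting $\|\hat{\beta}_{LS}\|^{2}-\|\hat{\beta}_{k}\|^{2}=\|\hat{\beta}_{LS}-\hat{\beta}_{k}\|^{2}+2\langle\hat{\beta}_{k},\hat{\beta}_{LS}-\hat{\beta}_{k}\rangle$ and show the cross term equals $\hat{P}_{k}(0)\sum_{i}\hat{Q}_{k}(\lambda_{i})^{2}\hat{p}_{i}^{2}\geq0$ by peeling off the constant term of $\hat{P}_{k}$ and killing the remainder with the $\hat{\mu}$-orthogonality of $\hat{Q}_{k}$ to $\mathcal{P}_{k-1}$ (note the weights of $\hat{\mu}$ are $\lambda_{j}\hat{p}_{j}^{2}$, which is exactly why the factor $\lambda_i$ you pull out of $\hat P_k(\lambda_i)=\hat P_k(0)+\lambda_i S(\lambda_i)$ turns the second sum into $\int S\hat{Q}_{k}\,d\hat{\mu}$ --- this bookkeeping is right in your write-up). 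Both proofs ultimately rest on Proposition~\ref{prop: poly-ortho} and on point 2 of Lemma~\ref{lem:properties}, but yours is more self-contained for the stated proposition: it avoids the Gram-type expansion in the Krylov basis and produces an explicit nonnegative lower bound $\|\hat{\beta}_{LS}\|^{2}-\|\hat{\beta}_{k}\|^{2}\geq\|\hat{\beta}_{LS}-\hat{\beta}_{k}\|^{2}$, plus the exact value of the cross term, whereas the paper's route delivers the full monotonicity in $k$ at once. Your closing remark that the same computation applied to $\hat{\beta}_{k}-\hat{\beta}_{k-1}$ recovers that monotonicity is also correct, though there you additionally need $\sum_{i}\bigl(\hat{Q}_{k-1}(\lambda_{i})-\hat{Q}_{k}(\lambda_{i})\bigr)\hat{p}_{i}^{2}\geq0$, i.e.\ the decrease of the residual sums of squares, which is worth stating explicitly if you promote that aside to a proof of Lemma~\ref{lem:GS}.
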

This global shrinkage feature of PLS was first proved algebraically by \cite{JONG95} and a year later \cite{GOU96} proposed a new independant proof based on the PLS iterative construction algorithm by taking a geometric point of view. In addition \cite{JONG95} proved the more stronger following result:
\begin{lem}
\label{lem:GS}
 $\parallel \hat{\beta}_{k-1}\parallel^{2}\leq \parallel \hat{\beta}_{k}\parallel^{2}$ for all $k\leq r$.
 \end{lem}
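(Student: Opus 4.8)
The plan is to identify the PLS iterates $\hat{\beta}_{k}$ with the iterates produced by the conjugate gradient method on the normal equations $X^{T}X\beta=X^{T}Y$, and then to exploit the positivity of the conjugate gradient step-sizes. The bridge to the polynomial picture of Sections \ref{section:PLS}--\ref{section:filter} is the elementary identity, valid for all polynomials $f,g$,
\[
\langle f(X^{T}X)X^{T}Y,\ g(X^{T}X)X^{T}Y\rangle \;=\; Y^{T}\,(fg)(XX^{T})\,XX^{T}Y \;=\; \sum_{i=1}^{r} f(\lambda_{i})\,g(\lambda_{i})\,\lambda_{i}\,\hat{p}_{i}^{2},
\]
where $\langle\cdot,\cdot\rangle$ is the Euclidean inner product on $\mathbb{R}^{p}$. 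Since $\hat{\beta}_{k}=\hat{P}_{k}(X^{T}X)X^{T}Y$ with $\hat{P}_{k}\in\mathcal{P}_{k-1}$, this already gives $\|\hat{\beta}_{k}\|^{2}=\sum_{i}\hat{P}_{k}(\lambda_{i})^{2}\lambda_{i}\hat{p}_{i}^{2}$, the squared norm of $\hat{P}_{k}$ against the measure $\hat{\mu}$ of Proposition \ref{prop: poly-ortho}; more importantly, it turns every Euclidean inner product of Krylov vectors into an integral against $\hat{\mu}$.

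Next I would introduce, alongside $\hat{Q}_{k}$ and $\hat{P}_{k}$, the \emph{direction polynomials} $\hat{D}_{k}$ defined by the conjugate gradient recursions $\hat{D}_{0}:=1$, $\hat{P}_{0}:=0$, and $\hat{P}_{k+1}:=\hat{P}_{k}+\hat{\alpha}_{k}\hat{D}_{k}$, $\hat{Q}_{k+1}:=\hat{Q}_{k}-\hat{\alpha}_{k}\,\lambda\hat{D}_{k}$, $\hat{D}_{k+1}:=\hat{Q}_{k+1}+\hat{\gamma}_{k+1}\hat{D}_{k}$; set $\hat{d}_{k}:=\hat{D}_{k}(X^{T}X)X^{T}Y$ and $\hat{r}_{k}:=\hat{Q}_{k}(X^{T}X)X^{T}Y=X^{T}Y-X^{T}X\hat{\beta}_{k}$. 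With the usual choices $\hat{\gamma}_{k+1}=\|\hat{r}_{k+1}\|^{2}/\|\hat{r}_{k}\|^{2}$ and $\hat{\alpha}_{k}=\|\hat{r}_{k}\|^{2}/\langle\hat{d}_{k},X^{T}X\hat{d}_{k}\rangle$, classical conjugate gradient theory \cite{SAAD92} guarantees that this recursion reproduces the minimizers of $\|Y-X\beta\|^{2}$ over $\mathcal{K}^{k}(X^{T}X,X^{T}Y)$, i.e.\ the PLS estimators, and that $\hat{\alpha}_{k}>0$, $\hat{\gamma}_{k+1}>0$ whenever $\hat{r}_{k}\neq0$ — which holds for all $k<r$ under the standing assumption that the Krylov dimension reaches $r$. (Positivity of $\hat{\alpha}_{k}$ is the identity $\langle\hat{d}_{k},X^{T}X\hat{d}_{k}\rangle=\sum_{i}\hat{D}_{k}(\lambda_{i})^{2}\lambda_{i}^{2}\hat{p}_{i}^{2}>0$, since the degree-$k$ polynomial $\hat{D}_{k}$ cannot vanish at all $r>k$ points of the positively weighted spectrum.) The only remaining ingredient is the orthogonality $\hat{r}_{k}\perp\mathcal{K}^{k}(X^{T}X,X^{T}Y)$ in $\mathbb{R}^{p}$: because $\hat{\beta}_{k}$ minimizes $\beta\mapsto\|Y-X\beta\|^{2}$ over the subspace $\mathcal{K}^{k}$, the gradient $-2\hat{r}_{k}$ is Euclidean-orthogonal to $\mathcal{K}^{k}$; through the identity above this is exactly the $\hat{\mu}$-orthogonality of $\hat{Q}_{k}$ to $\mathcal{P}_{k-1}$, i.e.\ Proposition \ref{prop: poly-ortho}. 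In particular $\langle\hat{r}_{k},\hat{\beta}_{k}\rangle=0$.

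Granting all this, the conclusion is a short induction. I claim $\langle\hat{d}_{k},\hat{\beta}_{k}\rangle\geq0$ for every $k$. This holds for $k=0$ since $\hat{\beta}_{0}=0$; assuming it at step $k$, the update $\hat{\beta}_{k+1}=\hat{\beta}_{k}+\hat{\alpha}_{k}\hat{d}_{k}$ gives $\langle\hat{d}_{k},\hat{\beta}_{k+1}\rangle=\langle\hat{d}_{k},\hat{\beta}_{k}\rangle+\hat{\alpha}_{k}\|\hat{d}_{k}\|^{2}\geq0$, and then, using $\hat{d}_{k+1}=\hat{r}_{k+1}+\hat{\gamma}_{k+1}\hat{d}_{k}$ together with $\langle\hat{r}_{k+1},\hat{\beta}_{k+1}\rangle=0$,
\[
\langle\hat{d}_{k+1},\hat{\beta}_{k+1}\rangle=\hat{\gamma}_{k+1}\,\langle\hat{d}_{k},\hat{\beta}_{k+1}\rangle\geq0 .
\]
Finally, from $\hat{\beta}_{k}=\hat{\beta}_{k-1}+\hat{\alpha}_{k-1}\hat{d}_{k-1}$ and $\hat{\alpha}_{k-1}>0$,
\[
\|\hat{\beta}_{k}\|^{2}-\|\hat{\beta}_{k-1}\|^{2}=2\hat{\alpha}_{k-1}\langle\hat{d}_{k-1},\hat{\beta}_{k-1}\rangle+\hat{\alpha}_{k-1}^{2}\|\hat{d}_{k-1}\|^{2}\geq0,
\]
which is Lemma \ref{lem:GS}; iterating up to $k=r$, where $\hat{\beta}_{r}=\hat{\beta}_{LS}$, recovers the global-shrinkage Proposition as well.

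The main obstacle is the second paragraph. One must make the equivalence ``conjugate gradient iterate $=$ PLS iterate'' and the sign of the step-sizes precise rather than merely quoted, while handling the fact that $X^{T}X$ is only positive semidefinite when $p>r$: everything should be carried out on the row space of $X$, where $X^{T}X$ is positive definite and where $X^{T}Y$ and all Krylov vectors lie. An alternative that never leaves the present framework is to \emph{define} the triple $(\hat{P}_{k},\hat{Q}_{k},\hat{D}_{k})$ by the recursions above and to check, by induction using Proposition \ref{prop: poly-ortho} and Lemma \ref{lem:properties}, that the $\hat{Q}_{k}$ so produced coincide with the residual polynomials; the monotonicity argument of the third paragraph is then unchanged.
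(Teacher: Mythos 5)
Your argument is correct, but it takes a genuinely different route from the paper's. You prove the monotonicity via the conjugate-gradient realization of PLS: you introduce the direction polynomials and vectors, verify that the step sizes $\hat\alpha_{k}$ and $\hat\gamma_{k+1}$ are positive (your spectral identity reduces this to the positivity of $\sum_{i}\hat{D}_{k}(\lambda_{i})^{2}\lambda_{i}^{2}\hat{p}_{i}^{2}$), and then run the classical induction $\langle\hat{d}_{k},\hat{\beta}_{k}\rangle\geq0$ to get $\|\hat{\beta}_{k}\|^{2}-\|\hat{\beta}_{k-1}\|^{2}=2\hat{\alpha}_{k-1}\langle\hat{d}_{k-1},\hat{\beta}_{k-1}\rangle+\hat{\alpha}_{k-1}^{2}\|\hat{d}_{k-1}\|^{2}\geq0$. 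This is essentially the first of the two proofs of \cite{PHATAK02} that the paper cites but deliberately does not reproduce. The paper instead stays inside its residual-polynomial framework: it observes that $X^{T}\hat{Q}_{0}(XX^{T})Y,\dots,X^{T}\hat{Q}_{k-1}(XX^{T})Y$ form an orthogonal basis of $\mathcal{K}^{k}(X^{T}X,X^{T}Y)$ (by Proposition \ref{prop: poly-ortho}), expands $\|\hat{\beta}_{k}\|^{2}$ as $\sum_{j=0}^{k-1}\bigl(\hat{\beta}_{k}^{T}X^{T}\hat{Q}_{j}(XX^{T})Y\bigr)^{2}/\|X^{T}\hat{Q}_{j}(XX^{T})Y\|^{2}$, and uses the second point of Lemma \ref{lem:properties} to identify each numerator as $\bigl(\|X\hat{\beta}_{k}\|^{2}-\|X\hat{\beta}_{j}\|^{2}\bigr)^{2}$; monotonicity then follows because the fitted norms $\|X\hat{\beta}_{j}\|^{2}$ increase along the nested Krylov subspaces, so every term grows with $k$ and a new nonnegative term is appended. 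What your route buys is that it needs nothing about the $\hat{Q}_{k}$ beyond the variational characterization $\hat{r}_{k}\perp\mathcal{K}^{k}$, at the price of having to set up and justify the CG recursion (well-definedness of the steps and restriction to the row space of $X$ when $X^{T}X$ is singular) --- a gap you correctly flag and sketch how to close. The paper's route uses only machinery it has already established and yields an explicit representation of $\|\hat{\beta}_{k}\|^{2}$ as a sum of nonnegative increments, which is more in the spirit of its unified framework; both arguments are sound.
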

Two other proofs of this fact were provided later by \cite{PHATAK02}. The first one uses the link between PLS and Conjugate Gradient while the other uses the theory of quadratic forms. We provide below an alternative proof of Lemma (\ref{lem:GS}) using the residual polynomials. This proof is very closed to the one of \cite{PHATAK02} and we do not have to make use of the expression of the residuals to prove it.
  
\begin{proof}
 The vectors $X^T\hat{Q}_0(XX^T)Y$,...,$X^T\hat{Q}_{k-1}(XX^T)Y$ belongs to $\mathcal{K}^{k}(X^{T}X,X^TY)$ and are orthogonals (because $(\hat{Q}_{k})_{0\leq k\leq r}$ is a sequence of orthogonal polynomials with respect to the discrete measure $\hat{\mu}$). Therefore, they formed an orthogonal basis for  $\mathcal{K}^{k}(X^{T}X,X^TY)$. As $\hat{\beta}_k \in \mathcal{K}^{k}(X^{T}X,X^TY) $, we have
$$\parallel\hat{\beta}_k\parallel^2:= \sum_{j=0}^{k-1}\dfrac{\left(\hat{\beta}_k^TX^T\hat{Q}_{j}(XX^T)Y \right)^2 }{\parallel  X^T\hat{Q}_{j}(XX^T)Y \parallel^2}.$$
Further, because $X\hat{\beta}_k=\sum_{i=1}^{r}(1-\hat{Q}_k(\lambda_i))\hat{p}_iu_i$, we may write
 $$\hat{\beta}_k^TX^T\hat{Q}_{j}(XX^T)Y=\sum_{i=1}^{r}(1-\hat{Q}_k(\lambda_i))\hat{Q}_j(\lambda_i)\hat{p}_i^2 =\sum_{i=1}^{r}\hat{Q}_j(\lambda_i)\hat{p}_i^2-\sum_{i=1}^{r}\hat{Q}_k(\lambda_i)\hat{p}_i^2$$
 $$=\parallel Y-X\hat{\beta}_j\parallel^2-\parallel Y-X\hat{\beta}_k\parallel^2=\parallel X\hat{\beta}_k\parallel^2-\parallel X\hat{\beta}_j\parallel^2.$$
For the justification of the equalities above, we refer to Subsection \ref{subsection:interest} and to the second point of Lemma \ref{lem:properties} of Section \ref{section:PLS}.
To conlude $$\parallel \hat{\beta}_k\parallel^2:= \sum_{j=0}^{k-1}\dfrac{\left(\parallel X\hat{\beta}_k\parallel^2-\parallel X\hat{\beta}_j\parallel^2\right)^2 }{\parallel  X^T\hat{Q}_{j}(XX^T)Y \parallel^2}$$
Furthermore, for $1\leq l<k\leq r$, we have $\parallel X\hat{\beta}_l\parallel^2< \parallel X\hat{\beta}_k\parallel^2$ (because  $X\hat{\beta}_l$ and  $X\hat{\beta}_l$ are the orthogonal projection of $Y$ onto two Krylov subspaces, the first one included in the other). So that, we may deduce that $$\parallel\hat{\beta}_k\parallel^2\leq \sum_{j=0}^{k-1}\dfrac{\left(\parallel X\hat{\beta}_{k+1}\parallel^2-\parallel X\hat{\beta}_j\parallel^2\right)^2 }{\parallel  X^T\hat{Q}_{j}(XX^T)Y \parallel^2}:=  \parallel\hat{\beta}_{k+1}\parallel^2.$$
 Finally, because $\parallel\hat{\beta}_r\parallel^2=\parallel\hat{\beta}_{LS}\parallel^2$, we conclude that for all $k\leq r$ we have $$\parallel\hat{\beta}_{k-1}\parallel^2\leq \parallel\hat{\beta}_k\parallel^2 \leq \parallel\hat{\beta}_{LS}\parallel^2.$$
\end{proof}

\section{Empirical risk}
As far as we know, the PLS empirical risk has not been much studied. In fact the geometric point of view of \cite{GOU96} or the one of \cite{LIN00} based on the Ritz eigenvalues are not well tailored to this study. In this section, we give a nice expression of the empirical risk in terms of the eigenelements of $X$ and on the noise on the observations.
\label{section:empirical risk}
\subsection{An analytical expression for the empirical risk}
The empirical risk is defined as $\parallel Y-X\hat{\beta}_k\parallel^{2}$. It quantifies the fit of the model to the data set used. For PLS, we may write $$\parallel Y-X\hat{\beta}_k\parallel^{2}=\sum_{i=1}^{r}\hat{Q}_k(\lambda_i)\hat{p}_i^2+\left\lbrace 
\begin{array}{ccc}
 0 & \mbox{if} &  r=n \\
 \sum_{i=r+1}^{n}\hat{p}_i^2 & \mbox{if} & r<n
\end{array}\right ..$$ However, this expression is not very enlighting. In this section, we provide an analytical expression for the empirical risk which will be more useful to derive important properties of the empirical risk. In particular we will see that based on this new expression it is easy to show that PLS fits closer than PCR.
\begin{prop}
\label{prop:empirical_risk}
For $k<r$
$$\parallel Y-X\hat{\beta}_k\parallel^{2}=$$
%
\begin{equation}
\label{eq:empirical_risk}
\sum_{r>j_1>...>j_k\geq 1}\left[\dfrac{ \hat{p}_{j_{1}}^{2}...\hat{p}_{j_{k+1}}^{2}\lambda_{j_{1}}^{2},...,\lambda_{j_{k}}^{2}V(\lambda_{j_{1}},...,\lambda_{j_{k}})^{2}}{\sum_{(j_{1},..,j_{k})\in I^{+}_{k}} \hat{p}_{j_{1}}^{2}...\hat{p}_{j_{k}}^{2}\lambda_{j_{1}}^{2}...\lambda_{j_{k}}^{2}V(\lambda_{j_{1}},...,\lambda_{j_{k}})^{2}}\sum_{i=j_1+1}^{r} \left( \prod \left(1-\frac{\lambda_i}{\lambda_{j_l}} \right)^2\hat{p}_i^2\right) \right] 
\end{equation}
$$+
\left\lbrace 
\begin{array}{ccc}
 0 & \mbox{if} &  r=n \\
 \sum_{i=r+1}^{n}\hat{p}_i^2 & \mbox{if} & r<n
\end{array}\right ..$$
\end{prop}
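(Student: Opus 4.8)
The plan is to start from the identity $\|Y-X\hat{\beta}_k\|^2=\sum_{i=1}^r\hat{Q}_k(\lambda_i)\hat{p}_i^2+(\text{tail})$ recalled at the beginning of this section (the tail being $\sum_{i=r+1}^n\hat{p}_i^2$ when $r<n$ and $0$ when $r=n$), and to substitute the explicit form of $\hat{Q}_k$ supplied by Theorem \ref{theo: expression-det-2}. Writing $\hat{Q}_k(\lambda_i)=\sum_{J\in I_k^+}\hat{w}_J\prod_{l=1}^k\bigl(1-\frac{\lambda_i}{\lambda_{j_l}}\bigr)$ with $J=(j_1,\dots,j_k)$ and exchanging the two summations, the signal part becomes $\sum_{J\in I_k^+}\hat{w}_J\sum_{i=1}^r\hat{p}_i^2\prod_{l=1}^k\bigl(1-\frac{\lambda_i}{\lambda_{j_l}}\bigr)$. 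Since the nonzero eigenvalues are pairwise distinct, the product over $l$ vanishes as soon as $i\in\{j_1,\dots,j_k\}$, so in the inner sum $i$ runs only over $\{1,\dots,r\}\setminus J$. A pair $(J,i)$ with $i\notin J$ is the same datum as a $(k+1)$-element index set $S:=J\cup\{i\}$ together with a marked element $i\in S$; regrouping the double sum according to $S$ gives
$$\sum_{i=1}^r\hat{Q}_k(\lambda_i)\hat{p}_i^2=\sum_{|S|=k+1}\ \sum_{i\in S}\hat{w}_{S\setminus\{i\}}\,\hat{p}_i^2\prod_{l\in S\setminus\{i\}}\Bigl(1-\frac{\lambda_i}{\lambda_l}\Bigr).$$

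The heart of the proof is then a purely algebraic identity on a fixed $(k+1)$-set $S$: if $i_S\in S$ denotes the index carrying the smallest of the eigenvalues $\{\lambda_l:l\in S\}$, then
$$\sum_{i\in S}\hat{w}_{S\setminus\{i\}}\,\hat{p}_i^2\prod_{l\in S\setminus\{i\}}\Bigl(1-\frac{\lambda_i}{\lambda_l}\Bigr)=\hat{w}_{S\setminus\{i_S\}}\,\hat{p}_{i_S}^2\prod_{l\in S\setminus\{i_S\}}\Bigl(1-\frac{\lambda_{i_S}}{\lambda_l}\Bigr)^2.$$
To establish it I would plug in the definition of $\hat{w}_{S\setminus\{i\}}$, cancel the common factor $Z_k^{-1}\prod_{l\in S}\hat{p}_l^2$, and use $\lambda_l^2\bigl(1-\frac{\lambda_i}{\lambda_l}\bigr)=\lambda_l(\lambda_l-\lambda_i)$ to reduce the claim to a statement about Vandermonde products: denoting the $k+1$ eigenvalues attached to $S$ by $x_1>\dots>x_{k+1}$ and writing $V_p$ for the Vandermonde determinant of the $k$ numbers obtained from $(x_1,\dots,x_{k+1})$ by deleting $x_p$, one must show that $\sum_{p=1}^{k+1}\bigl(\prod_{m\neq p}x_m(x_m-x_p)\bigr)V_p^2$ equals $\bigl(\prod_{m=1}^k(x_m-x_{k+1})^2\bigr)V(x_1,\dots,x_k)^2$. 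Using the cofactor relation $V_p\prod_{m\neq p}(x_p-x_m)=(-1)^{p-1}V(x_1,\dots,x_{k+1})$ and the partial fraction identity $\sum_{p=1}^{k+1}\bigl(x_p\prod_{m\neq p}(x_p-x_m)\bigr)^{-1}=(-1)^k\bigl(\prod_m x_m\bigr)^{-1}$, the left side collapses to $V(x_1,\dots,x_{k+1})^2$, while the right side equals the same quantity by the factorization $V(x_1,\dots,x_{k+1})=V(x_1,\dots,x_k)\prod_{m=1}^k(x_m-x_{k+1})$.

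Finally, feeding this local identity back into the regrouped sum, a $(k+1)$-set $S$ is conveniently described by its largest index $i$ (which equals $i_S$, since a larger index means a smaller eigenvalue) and the $k$ remaining indices $j_1>\dots>j_k$, all strictly below $i$; recalling that $\hat{w}_{(j_1,\dots,j_k)}=Z_k^{-1}\hat{p}_{j_1}^2\cdots\hat{p}_{j_k}^2\lambda_{j_1}^2\cdots\lambda_{j_k}^2V(\lambda_{j_1},\dots,\lambda_{j_k})^2$, this produces precisely the sum over $r> j_1>\dots>j_k\ge1$ with inner sum $\sum_{i=j_1+1}^r\hat{p}_i^2\prod_{l}\bigl(1-\frac{\lambda_i}{\lambda_{j_l}}\bigr)^2$ appearing in (\ref{eq:empirical_risk}) (the contributions with $j_1=r$ being empty), and adding back the tail term $\sum_{i=r+1}^n\hat{p}_i^2$ finishes the proof. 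The only step that is not routine bookkeeping is the $(k+1)$-set identity, and within it the genuinely delicate point is tracking the alternating signs so that the Vandermonde cofactor expansion and the partial fraction sum combine into a perfect square; everything else is reorganisation of finite sums.
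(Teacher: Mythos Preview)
Your argument is correct. Both your proof and the paper's start from $\|Y-X\hat\beta_k\|^2=\sum_{i=1}^r\hat Q_k(\lambda_i)\hat p_i^2+(\text{tail})$ and plug in the explicit form of $\hat Q_k$; they diverge in how the resulting double sum is collapsed. The paper expands one of the two Vandermonde factors in $\hat w_{(j_1,\dots,j_k)}$ as a permutation sum, unfolds the ordered sum over $I_k^+$ into an unrestricted sum over $\{1,\dots,r\}^k$, and then recognises that together with the outer $\sum_i$ this rebuilds $\sum_{I_{k+1}^+}\hat p_{j_1}^2\cdots\hat p_{j_{k+1}}^2\,V(\lambda_{j_1},\dots,\lambda_{j_{k+1}})^2$; this yields the clean intermediate ratio
\[
\|Y-X\hat\beta_k\|^2=\frac{\sum_{I_{k+1}^+}\hat p_{j_1}^2\cdots\hat p_{j_{k+1}}^2\,V(\lambda_{j_1},\dots,\lambda_{j_{k+1}})^2}{\sum_{I_k^+}\hat p_{j_1}^2\cdots\hat p_{j_k}^2\,\lambda_{j_1}^2\cdots\lambda_{j_k}^2\,V(\lambda_{j_1},\dots,\lambda_{j_k})^2},
\]
from which the stated formula follows by the factorisation $V(\lambda_i,\lambda_{j_1},\dots,\lambda_{j_k})=V(\lambda_{j_1},\dots,\lambda_{j_k})\prod_l(\lambda_{j_l}-\lambda_i)$. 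You instead regroup immediately by $(k+1)$-sets $S$ and prove, for each fixed $S$, the local identity that reduces (after cancelling $Z_k^{-1}\prod_{l\in S}\hat p_l^2$) to $\sum_p\bigl(\prod_{m\ne p}x_m(x_m-x_p)\bigr)V_p^2=V(x_1,\dots,x_{k+1})^2$, established via the cofactor relation and the residue identity $\sum_p\bigl(x_p\prod_{m\ne p}(x_p-x_m)\bigr)^{-1}=(-1)^k\prod_m x_m^{-1}$. The two routes meet at the same point (each $(k+1)$-set contributes $Z_k^{-1}\prod_{l\in S}\hat p_l^2\,V_S^2$); the paper's permutation trick is slicker and delivers the compact ratio formula as a bonus, while your cofactor/partial-fraction argument is more self-contained and makes the sign bookkeeping explicit rather than hiding it inside the symmetrisation step.
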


Notice that for $k=r$, $\parallel Y-X\hat{\beta}_r\parallel^{2}=
\left\lbrace 
\begin{array}{ccc}
 0 & \mbox{if} &  r=n \\
 \sum_{i=r+1}^{n}\hat{p}_i^2 & \mbox{if} & r<n
\end{array}\right ..$

\begin{proof}
On one hand (cf. Subsection \ref{subsection:interest}), we have 
$$\parallel Y-X\hat{\beta}_k\parallel^{2}=\sum_{i=1}^{r}\hat{Q}_k(\lambda_i)\hat{p}_i^2+\left\lbrace 
\begin{array}{ccc}
 0 & \mbox{if} &  r=n \\
 \sum_{i=r+1}^{n}\hat{p}_i^2 & \mbox{if} & r<n
\end{array}\right ..$$
And on the other hand, using Formula (\ref{eq:final-expression-residuals}), we have
$$\sum_{i=1}^{r}\hat{Q}_k(\lambda_i)\hat{p}_i^2=$$
$$\sum_{i=1}^{r}\left[ \sum_{(j_{1},..,j_{k})\in I^{+}_{k}}\left[ 
\dfrac{\hat{p}_{j_{1}}^{2}...\hat{p}_{j_{k}}^{2}\lambda_{j_{1}}^{2}...\lambda_{j_{k}}^{2}V(\lambda_{j_{1}},...,\lambda_{j_{k}})^{2}}{\sum_{(j_{1},..,j_{k})\in I^{+}_{k}} \hat{p}_{j_{1}}^{2}...\hat{p}_{j_{k}}^{2}\lambda_{j_{1}}^{2}...\lambda_{j_{k}}^{2}V(\lambda_{j_{1}},...,\lambda_{j_{k}})^{2}}\right] \prod_{l=1}^{k}(1-\frac{\lambda_{i}}{\lambda_{j_{l}}})\right] \hat{p}_i^2$$
$$=\dfrac{\sum_{i=1}^{r}\sum_{(j_{1},..,j_{k})\in I^{+}_{k}}\left[ \hat{p}_{j_{1}}^{2}...\hat{p}_{j_{k}}^{2}\lambda_{j_{1}}^{2}...\lambda_{j_{k}}^{2}V(\lambda_{j_{1}},...,\lambda_{j_{k}})^{2} \prod_{l=1}^{k}(1-\frac{\lambda_{i}}{\lambda_{j_{l}}}) \hat{p}_i^2\right] }{\sum_{(j_{1},..,j_{k})\in I^{+}_{k}} \hat{p}_{j_{1}}^{2}...\hat{p}_{j_{k}}^{2}\lambda_{j_{1}}^{2}...\lambda_{j_{k}}^{2}V(\lambda_{j_{1}},...,\lambda_{j_{k}})^{2}}$$
where 
$$\sum_{i=1}^{r}\sum_{(j_{1},..,j_{k})\in I^{+}_{k}}\left[ \hat{p}_{j_{1}}^{2}...\hat{p}_{j_{k}}^{2}\lambda_{j_{1}}^{2}...\lambda_{j_{k}}^{2}V(\lambda_{j_{1}},...,\lambda_{j_{k}})^{2} \prod_{l=1}^{k}(1-\frac{\lambda_{i}}{\lambda_{j_{l}}}) \hat{p}_i^2\right] =$$
$$\sum_{i=1}^{r}\sum_{(j_{1},..,j_{k})\in I^{+}_{k}}\left[ \hat{p}_{j_{1}}^{2}...\hat{p}_{j_{k}}^{2}\lambda_{j_{1}}^{2}...\lambda_{j_{k}}^{2}V(\lambda_{j_{1}},...,\lambda_{j_{k}})\left( \sum_{\sigma \in S(1,...,k)} \epsilon(\sigma)\lambda_{j_{\sigma(2)}}\lambda_{j_{\sigma(k)}}^{k-1}\right) \prod_{l=1}^{k}(1-\frac{\lambda_{i}}{\lambda_{j_{l}}}) \hat{p}_i^2\right] $$
because $V(\lambda_{j_{1}},...,\lambda_{j_{k}})=\sum_{\sigma \in S(1,...,k)} \epsilon(\sigma)\lambda_{j_{\sigma(2)}}\lambda_{j_{\sigma(k)}}^{k-1}$
$$=\sum_{i=1}^{r}\sum_{j_1=1}^r....\sum_{j_k=1}^r\left[ \hat{p}_{j_{1}}^{2}...\hat{p}_{j_{k}}^{2}\lambda_{j_{1}}^{2}...\lambda_{j_{k}}^{2}V(\lambda_{j_{1}},...,\lambda_{j_{k}})\lambda_{j_{2}}\lambda_{j_{k}}^{k-1}\prod_{l=1}^{k}(1-\frac{\lambda_{i}}{\lambda_{j_{l}}}) \hat{p}_i^2\right] $$
$$=\sum_{i=1}^{r}\sum_{j_1=1}^r....\sum_{j_k=1}^r\left[ \hat{p}_{j_{1}}^{2}...\hat{p}_{j_{k}}^{2}\lambda_{j_{1}}...\lambda_{j_{k}}^{k}V(\lambda_{j_{1}},...,\lambda_{j_{k}})\prod_{l=1}^{k}(\lambda_{j_{l}}-\lambda_{i}) \hat{p}_i^2\right].$$
Then, replacing the indices $(i,1,...,k)$ by $(1,2,..,k+1)$ we obtain
$$\sum_{i=1}^{r}\sum_{j_1=1}^r....\sum_{j_k=1}^r\left[ \hat{p}_{j_{1}}^{2}...\hat{p}_{j_{k}}^{2}\lambda_{j_{1}}...\lambda_{j_{k}}^{k}V(\lambda_{j_{1}},...,\lambda_{j_{k}})\prod_{l=1}^{k}(\lambda_{j_{l}}-\lambda_{i}) \hat{p}_i^2\right]$$ 
$$=\sum_{j_1=1}^{r}\sum_{j_2=1}^r....\sum_{j_{k+1}=1}^r\hat{p}_{j_{1}}^{2}...\hat{p}_{j_{k+1}}^{2}\lambda_{j_{2}}...\lambda_{j_{k+1}}^{k}V(\lambda_{j_{1}},...,\lambda_{j_{k+1}})$$

$$=\sum_{(j_{1},..,j_{k+1})\in I^{+}_{k+1}}\left[ \hat{p}_{j_{1}}^{2}...\hat{p}_{j_{k+1}}^{2}V(\lambda_{j_{1}},...,\lambda_{j_{k+1}})\left( \sum_{\sigma \in S(1,..., k+1)} \epsilon(\sigma)\lambda_{j_{\sigma(2)}}\lambda_{j_{\sigma(k+1)}}^{k}\right) \right]$$
$$=\sum_{(j_{1},..,j_{k+1})\in I^{+}_{k+1}}\hat{p}_{j_{1}}^{2}...\hat{p}_{j_{k+1}}^{2}V(\lambda_{j_{1}},...,\lambda_{j_{k+1}})^{2}.$$
Therefore 
\begin{equation}
\label{eq:empirical-risk-bis}
\parallel Y-X\hat{\beta}_k\parallel^{2}=\dfrac{\sum_{(j_{1},..,j_{k+1})\in I^{+}_{k+1}}\hat{p}_{j_{1}}^{2}...\hat{p}_{j_{k+1}}^{2}V(\lambda_{j_{1}},...,\lambda_{j_{k+1}})^{2}}{\sum_{(j_{1},..,j_{k})\in I^{+}_{k}} \hat{p}_{j_{1}}^{2}...\hat{p}_{j_{k}}^{2}\lambda_{j_{1}}^{2}...\lambda_{j_{k}}^{2}V(\lambda_{j_{1}},...,\lambda_{j_{k}})^{2}}.
\end{equation}
Notice that another way to derive this result is to use the Gram matrix representation for orthogonal projections. Then, using the same arguments as the ones used to state Formula (\ref{eq:final-expression-residuals}), we can develop the two Gram determinants and get Equation (\ref{eq:empirical-risk-bis}).

Finally, using the fact that 
$$\sum_{(j_{1},..,j_{k+1})\in I^{+}_{k+1}}\hat{p}_{j_{1}}^{2}...\hat{p}_{j_{k+1}}^{2}V(\lambda_{j_{1}},...,\lambda_{j_{k+1}})^{2}=$$
$$\sum_{n>j_1>...>j_k\geq 1}\left[ \hat{p}_{j_{1}}^{2}...\hat{p}_{j_{k+1}}^{2}\lambda_{j_{1}}^{2},...,\lambda_{j_{k}}^{2}V(\lambda_{j_{1}},...,\lambda_{j_{k}})^{2}\sum_{i=j_1+1}^{r}\left( \prod_{l=1}^{k} \left(1-\frac{\lambda_i}{\lambda_{j_l}} \right)^2 \hat{p}_i^2\right)  \right],$$
we have proved Proposition \ref{prop:empirical_risk}.
\end{proof}

\subsection{Study of the empirical risk}
Now we have at hand an exact expression for the empirical risk. From this formula, we can easily provide a simplier and clearer upper bound for the PLS empirical risk, this is the objective of the next proposition.

\begin{prop}
\label{prop:upper_bound_empirical_risk}
Let $k<r$.
$$\parallel Y-X\hat{\beta}_k\parallel^{2}\leq \sum_{i=k+1}^{r}\left[  \prod_{l=1}^{k} \left(1-\frac{\lambda_i}{\lambda_{l}} \right)^2 \hat{p}_i^2\right] +
\left\lbrace 
\begin{array}{ccc}
 0 & \mbox{if} &  r=n \\
 \sum_{i=r+1}^{n}\hat{p}_i^2 & \mbox{if} & r<n
\end{array}\right ..$$
\end{prop}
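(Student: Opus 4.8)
The plan is to bypass the exact but unwieldy formula of Proposition~\ref{prop:empirical_risk} and instead exploit directly the variational characterization of the residual polynomial recorded in Proposition~\ref{prop:poly}, namely $\|Y-X\hat\beta_k\|^2=\|\hat Q_k(XX^T)Y\|^2=\min_{Q\in\mathcal P_{k,1}}\|Q(XX^T)Y\|^2$. Since $\hat Q_k$ minimizes over the whole class $\mathcal P_{k,1}$, plugging in any single competitor $Q\in\mathcal P_{k,1}$ yields an upper bound for the empirical risk, and the point is simply to choose the competitor well.

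First I would record, for an arbitrary polynomial $Q$, the spectral expansion of $Q(XX^T)Y$. Using $XX^T=\sum_{i=1}^r\lambda_iu_iu_i^T$ together with the fact that $u_{r+1},\dots,u_n$ span the kernel of $XX^T$, one gets $Q(XX^T)Y=\sum_{i=1}^rQ(\lambda_i)\hat p_iu_i+\sum_{i=r+1}^nQ(0)\hat p_iu_i$, hence by orthonormality of the $u_i$ and the normalization $Q(0)=1$ valid on $\mathcal P_{k,1}$,
$$\|Q(XX^T)Y\|^2=\sum_{i=1}^rQ(\lambda_i)^2\hat p_i^2+\left\lbrace\begin{array}{ccc}0&\mbox{if}&r=n\\ \sum_{i=r+1}^n\hat p_i^2&\mbox{if}&r<n\end{array}\right..$$
Then I would take the test polynomial $Q^{\star}(x):=\prod_{l=1}^k\bigl(1-\tfrac{x}{\lambda_l}\bigr)$, which has degree exactly $k$ and constant term one, so it lies in $\mathcal P_{k,1}$, and whose roots are precisely the $k$ largest eigenvalues $\lambda_1,\dots,\lambda_k$. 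Consequently $Q^{\star}(\lambda_i)=0$ for $i=1,\dots,k$ (each such $\lambda_i$ kills the $l=i$ factor, whether or not the eigenvalues are distinct), so the sum $\sum_{i=1}^rQ^{\star}(\lambda_i)^2\hat p_i^2$ collapses to $\sum_{i=k+1}^r\prod_{l=1}^k(1-\lambda_i/\lambda_l)^2\hat p_i^2$; combining this with the minimality of $\hat Q_k$ gives exactly the asserted inequality.

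There is essentially no analytic obstacle: the only points to check are that $Q^{\star}\in\mathcal P_{k,1}$ and that the argument never needs the $\lambda_i$ to be distinct. It is worth noting the alternative route, starting from the closed form~(\ref{eq:empirical-risk-bis}): one may rewrite the empirical risk as a $\hat w$-convex combination of the quantities $\sum_{i=j_1+1}^r\prod_{l=1}^k(1-\lambda_i/\lambda_{j_l})^2\hat p_i^2$ over $(j_1,\dots,j_k)\in I_k^+$ and then bound each of these by the term corresponding to $(j_1,\dots,j_k)=(k,k-1,\dots,1)$. That is the place where a genuine difficulty would arise, since the factors $1-\lambda_i/\lambda_{j_l}$ are not monotone in the index and a term-by-term comparison of the products is delicate; this is precisely why the variational argument above, which sidesteps the issue entirely, is the one I would use.
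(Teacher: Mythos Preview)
Your argument is correct and is genuinely different from the paper's. The paper derives the bound as a corollary of the exact formula of Proposition~\ref{prop:empirical_risk}: it observes that the expression~(\ref{eq:empirical_risk}) is a sub-convex combination (weights summing to at most one) of the quantities $\sum_{i=j_1+1}^{r}\prod_{l=1}^{k}(1-\lambda_i/\lambda_{j_l})^{2}\hat p_i^{2}$ over $(j_1,\dots,j_k)\in I_k^{+}$, bounds this by the maximum, and then asserts that the maximum is attained at $(j_1,\dots,j_k)=(k,k-1,\dots,1)$. That last step is the monotonicity claim you flagged as delicate; it does hold (sorting the indices $\{j_1,\dots,j_k\}=\{m_1<\dots<m_k\}$ gives $m_l\ge l$, hence $\lambda_{m_l}\le\lambda_l$ and $0\le 1-\lambda_i/\lambda_{m_l}\le 1-\lambda_i/\lambda_l$ for every $i>j_1$, while also $j_1\ge k$ so the summation range only shrinks), but it is left implicit in the paper. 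Your variational route via Proposition~\ref{prop:poly} with the explicit competitor $Q^{\star}(x)=\prod_{l=1}^{k}(1-x/\lambda_l)$ bypasses Proposition~\ref{prop:empirical_risk} entirely, is shorter, and requires no monotonicity argument at all. The trade-off is thematic rather than mathematical: the paper's route illustrates that its central object (the explicit residual-polynomial formula) is strong enough to yield the bound, whereas your argument shows the bound is in fact elementary and independent of that machinery.
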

Notice that if $\frac{\lambda_r}{\lambda_k}>1-\delta$ then $\sum_{i=k+1}^{r}\left[  \prod_{l=1}^{k} \left(1-\frac{\lambda_i}{\lambda_{l}} \right)^2\hat{p}_i^2\right]\leq \delta\sum_{i=k+1}^{r}\hat{p}_i^2 $.

\begin{proof}
This is a straightforward consequence of Proposition \ref{prop:empirical_risk} above because $$\sum_{r>j_1>...>j_k\geq 1}\left[\dfrac{ \hat{p}_{j_{1}}^{2}...\hat{p}_{j_{k+1}}^{2}\lambda_{j_{1}}^{2},...,\lambda_{j_{k}}^{2}V(\lambda_{j_{1}},...,\lambda_{j_{k}})^{2}}{\sum_{(j_{1},..,j_{k})\in I^{+}_{k}} \hat{p}_{j_{1}}^{2}...\hat{p}_{j_{k}}^{2}\lambda_{j_{1}}^{2}...\lambda_{j_{k}}^{2}V(\lambda_{j_{1}},...,\lambda_{j_{k}})^{2}}\right]\leq 1 $$
and therefore
$$\sum_{r>j_1>...>j_k\geq 1}\left[\dfrac{ \hat{p}_{j_{1}}^{2}...\hat{p}_{j_{k+1}}^{2}\lambda_{j_{1}}^{2},...,\lambda_{j_{k}}^{2}V(\lambda_{j_{1}},...,\lambda_{j_{k}})^{2}}{\sum_{(j_{1},..,j_{k})\in I^{+}_{k}} \hat{p}_{j_{1}}^{2}...\hat{p}_{j_{k}}^{2}\lambda_{j_{1}}^{2}...\lambda_{j_{k}}^{2}V(\lambda_{j_{1}},...,\lambda_{j_{k}})^{2}}\sum_{i=j_1+1}^{r} \prod \left(1-\frac{\lambda_i}{\lambda_{j_l}} \right)^2\hat{p}_i^2\right]  $$
$$ \leq\underset{I_k^+}{\textrm{max}}\left[ \sum_{i=j_1+1}^{r} \prod_{l=1}^{k} \left(1-\frac{\lambda_i}{\lambda_{j_l}} \right)^2\hat{p}_i^2\right] =\sum_{i=k+1}^{r} \left[ \prod_{l=1}^{k} \left(1-\frac{\lambda_i}{\lambda_{l}} \right)^2\hat{p}_i^2\right] .$$
\end{proof}

\begin{corollary}
\label{cor:ER}
Let $k<r$.
For a fixed design matrix we have
$$\mathbb{E}\left( \frac{1}{n}\parallel Y-X\hat{\beta}_k\parallel^{2}\right)$$
$$ \leq \frac{1}{n}\left(1-\frac{\lambda_n}{\lambda_1} \right)^{2k} \left[\sum_{i=k+1}^{r}\lambda_i\left( \beta^*_i \right) ^2+(r-k)\sigma^{2}\right]+
\left\lbrace 
\begin{array}{ccc}
 0 & \mbox{if} &  r=n \\
 \frac{1}{n}\sum_{i=r+1}^{n}\left( \lambda_i\left( \beta^*_i \right) ^2+\sigma^{2}\right) & \mbox{if} & r<n
\end{array}\right .. $$
\end{corollary}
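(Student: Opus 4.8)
The plan is to average the deterministic pointwise bound of Proposition \ref{prop:upper_bound_empirical_risk}, after extracting from it a single numerical constant.

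First I would bound the products occurring there. For $k<i\le r$ and $1\le l\le k$ the ordering $\lambda_1\ge\cdots\ge\lambda_r>0$ gives $0\le 1-\frac{\lambda_i}{\lambda_l}\le 1-\frac{\lambda_r}{\lambda_1}$, so, writing $\lambda_n$ for the smallest eigenvalue $\lambda_r$ as in the statement, $\prod_{l=1}^{k}\bigl(1-\frac{\lambda_i}{\lambda_l}\bigr)^{2}\le\bigl(1-\frac{\lambda_n}{\lambda_1}\bigr)^{2k}$ for every such $i$. Plugging this into Proposition \ref{prop:upper_bound_empirical_risk} gives, for each realization of $\varepsilon$,
$$\Vert Y-X\hat{\beta}_k\Vert^{2}\le\Bigl(1-\tfrac{\lambda_n}{\lambda_1}\Bigr)^{2k}\sum_{i=k+1}^{r}\hat{p}_i^{2}+\begin{cases}0 & \text{if } r=n,\\[2pt]\sum_{i=r+1}^{n}\hat{p}_i^{2} & \text{if } r<n.\end{cases}$$

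Next I would take expectations, the design being fixed so that only $\varepsilon$ is random. Since $\hat{p}_i=Y^{T}u_i=(X\beta^{*})^{T}u_i+\varepsilon^{T}u_i=p_i+\tilde{\varepsilon}_i$ with $p_i$ deterministic, $\mathbb{E}[\tilde{\varepsilon}_i]=0$, and $\mathbb{E}[\tilde{\varepsilon}_i^{2}]=u_i^{T}\mathbb{E}[\varepsilon\varepsilon^{T}]u_i=\sigma^{2}\Vert u_i\Vert^{2}=\sigma^{2}$, we obtain $\mathbb{E}[\hat{p}_i^{2}]=p_i^{2}+\sigma^{2}=\lambda_i(\tilde{\beta}^{*}_i)^{2}+\sigma^{2}$ from $p_i=\sqrt{\lambda_i}\,\tilde{\beta}^{*}_i$ (this $\tilde{\beta}^{*}_i$ being written $\beta^{*}_i$ in the statement); for $i>r$ we have $\lambda_i=0$ and $u_i^{T}X=0$, hence $p_i=0$ and $\mathbb{E}[\hat{p}_i^{2}]=\sigma^{2}$, which equals $\lambda_i(\beta^{*}_i)^{2}+\sigma^{2}$ with the convention $\lambda_i=0$. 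Taking expectations in the displayed inequality (the prefactor is deterministic), summing the $r-k$ resp.\ $n-r$ constant terms, and dividing by $n$ gives precisely Corollary \ref{cor:ER}.

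The argument is essentially routine once Proposition \ref{prop:upper_bound_empirical_risk} is in hand; the one point deserving a line of care is the uniform bound $\prod_{l=1}^{k}(1-\lambda_i/\lambda_l)^{2}\le(1-\lambda_n/\lambda_1)^{2k}$, which relies only on the monotone ordering of the spectrum, and no integrability issue arises because the design is fixed and the errors have finite variance.
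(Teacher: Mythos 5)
Your proof is correct and follows exactly the route the paper intends: the corollary is stated as an immediate consequence of Proposition \ref{prop:upper_bound_empirical_risk}, obtained by bounding each factor $1-\lambda_i/\lambda_l$ by $1-\lambda_r/\lambda_1$ (written $\lambda_n/\lambda_1$ in the statement) and then taking expectations using $\mathbb{E}[\hat{p}_i^2]=p_i^2+\sigma^2=\lambda_i(\tilde{\beta}^*_i)^2+\sigma^2$ for a fixed design. Your handling of the notational slips ($\lambda_n$ for $\lambda_r$, $\beta^*_i$ for $\tilde{\beta}^*_i$) and of the indices $i>r$ matches what the paper leaves implicit.
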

For $n$ fixed the empirical risk decreases with an exponential rate in $k$.
Notice that the upper bound for the empirical risk stated in Corollary \ref{cor:ER} is tigher and more accurate than the one we have stated in a previous paper (see. \cite{BLAZERE}) using the min-max optimality of the Chebyschev polynomials.

In addition, from Proposition \ref{prop:empirical_risk}, it is obvious to show  that PLS fits closer than PCR:
\begin{corollary}
For $k\leq r$
$$\parallel Y-X\hat{\beta}_k\parallel^{2}\leq  \sum_{i=k+1}^{n}\hat{p}_i^2:=\parallel Y-X\hat{\beta}_{PCR}^k\parallel^{2}.$$ 
\end{corollary}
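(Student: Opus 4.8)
The plan is to bound $\|Y-X\hat{\beta}_k\|^2$ by a single, well-chosen competitor polynomial and then compare it termwise with the PCR residual. Concretely, I would start from the variational characterization of Proposition \ref{prop:poly}, namely $\|Y-X\hat{\beta}_k\|^2=\min_{Q\in\mathcal{P}_{k,1}}\|Q(XX^T)Y\|^2$, together with the expansion recalled in Subsection \ref{subsection:interest}: for any $Q\in\mathcal{P}_{k,1}$ one has $Q(XX^T)Y=\sum_{i=1}^{n}Q(\lambda_i)\hat{p}_iu_i$ with $\lambda_i=0$ (hence $Q(\lambda_i)=Q(0)=1$) for $i>r$, so that $\|Q(XX^T)Y\|^2=\sum_{i=1}^{r}Q(\lambda_i)^2\hat{p}_i^2+\mathbf{1}_{\{r<n\}}\sum_{i=r+1}^{n}\hat{p}_i^2$.

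Next I would plug in the test polynomial $Q^{\ast}(x):=\prod_{l=1}^{k}\bigl(1-\tfrac{x}{\lambda_l}\bigr)\in\mathcal{P}_{k,1}$. For $i\le k$ the factor $l=i$ vanishes, so $Q^{\ast}(\lambda_i)=0$; for $i\ge k+1$ the ordering $\lambda_1\ge\cdots\ge\lambda_r>0$ gives $0<\lambda_i\le\lambda_l$ for every $l\le k$, hence $0\le 1-\tfrac{\lambda_i}{\lambda_l}<1$ and therefore $Q^{\ast}(\lambda_i)^2=\prod_{l=1}^{k}(1-\tfrac{\lambda_i}{\lambda_l})^2\le 1$. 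This yields $\|Q^{\ast}(XX^T)Y\|^2\le\sum_{i=k+1}^{r}\hat{p}_i^2+\mathbf{1}_{\{r<n\}}\sum_{i=r+1}^{n}\hat{p}_i^2=\sum_{i=k+1}^{n}\hat{p}_i^2$, and by minimality $\|Y-X\hat{\beta}_k\|^2=\|\hat{Q}_k(XX^T)Y\|^2\le\|Q^{\ast}(XX^T)Y\|^2\le\sum_{i=k+1}^{n}\hat{p}_i^2$. Finally I would identify the right-hand side as the PCR residual: from $\hat{\beta}_{PCR}^{k}=\sum_{i=1}^{k}\hat{p}_i\lambda_i^{-1/2}v_i$ we get $X\hat{\beta}_{PCR}^{k}=\sum_{i=1}^{k}\hat{p}_iu_i$, so $\|Y-X\hat{\beta}_{PCR}^{k}\|^2=\sum_{i=k+1}^{n}\hat{p}_i^2$, which closes the argument. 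Alternatively, the same estimate follows immediately by applying Proposition \ref{prop:upper_bound_empirical_risk} together with the bound $\prod_{l=1}^{k}(1-\tfrac{\lambda_i}{\lambda_l})^2\le 1$ for $i\ge k+1$, after handling the boundary case $k=r$ directly (there both sides equal $\mathbf{1}_{\{r<n\}}\sum_{i=r+1}^{n}\hat{p}_i^2$).

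There is essentially no hard step: the claim is a one-line consequence of the extremal property of the residual polynomials once $Q^{\ast}$ is chosen. The only points that need a little care are purely bookkeeping — keeping track of the extra contribution $\sum_{i=r+1}^{n}\hat{p}_i^2$ when $r<n$ (it appears identically in the PLS and PCR residuals and so never affects the inequality), and noting that the estimate is trivial exactly when $k=r$, which is why the statement may be phrased for $k\le r$ rather than $k<r$.
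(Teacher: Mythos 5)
Your proof is correct, and your primary argument takes a genuinely different (and more elementary) route than the paper's. The paper derives the corollary from Proposition \ref{prop:upper_bound_empirical_risk}, i.e.\ from the exact analytical expression of the empirical risk in Proposition \ref{prop:empirical_risk}, which itself rests on the explicit formula for the residual polynomials in Theorem \ref{theo: expression-det-2}; the only extra step is the bound $\prod_{l=1}^{k}\bigl(1-\lambda_i/\lambda_l\bigr)^2\le 1$ for $i\ge k+1$ — exactly the ``alternative'' you sketch at the end, including the correct observation that this route formally covers only $k<r$ and that $k=r$ must be checked separately. Your main argument instead goes straight to the variational characterization $\|Y-X\hat{\beta}_k\|^{2}=\min_{Q\in\mathcal{P}_{k,1}}\|Q(XX^T)Y\|^{2}$ of Proposition \ref{prop:poly} and tests it against $Q^{*}(x)=\prod_{l=1}^{k}(1-x/\lambda_l)$, whose roots are the $k$ largest eigenvalues; since $Q^{*}$ annihilates the first $k$ spectral components and is bounded by one on the rest, the PCR residual appears immediately as an upper bound, uniformly in $k\le r$ and with the $\sum_{i=r+1}^{n}\hat{p}_i^2$ term handled transparently. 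What your route buys is independence from the heavy combinatorial machinery (Vandermonde weights, normalization constants): the comparison with PCR is revealed as a pure consequence of the extremal property of the residual polynomials. What the paper's route buys is that the corollary comes for free once the sharper quantitative bound of Proposition \ref{prop:upper_bound_empirical_risk} (and hence the exponential rate of Corollary \ref{cor:ER}) has already been established. Both arguments are sound.
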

where by convention $ \sum_{i=
r+1}^{n}\hat{p}_i^2=0$ if $r= n$.
\begin{proof}
For all $i=k+1,...n$ 
$$0\leq\prod_{l=1}^{k} \left(1-\frac{\lambda_i}{\lambda_{l}} \right)\leq 1.$$
Therefore, $\sum_{i=k+1}^{r} \prod_{l=1}^{k} \left(1-\frac{\lambda_i}{\lambda_{l}} \right)^2\hat{p}_i^2\leq \sum_{i=k+1}^{r}\hat{p}_i^2.$
Then, we deduce from Proposition \ref{prop:upper_bound_empirical_risk} that 
$$\parallel Y-X\hat{\beta}_k\parallel^{2}\leq  \sum_{i=k+1}^{n}\hat{p}_i^2:=\parallel Y-X\hat{\beta}_{PCR}^k\parallel^{2}.$$
\end{proof}

In addition, because $$\parallel Y-X\hat{\beta}_k\parallel^{2}=\parallel Y-X\hat{\beta}_{OLS}\parallel^{2}+\parallel X\hat{\beta}_{OLS}-X\hat{\beta}_k\parallel^{2}= \sum_{i=r+1}^{n}\hat{p}_i^2+\parallel X\hat{\beta}_{OLS}-X\hat{\beta}_k\parallel^{2}$$
and $$\parallel Y-X\hat{\beta}_k\parallel^{2}\leq \sum_{i=k+1}^{n}\hat{p}_i^2 ,$$
we also conclude that $\parallel X\hat{\beta}_{OLS}-X\hat{\beta}_k\parallel^{2}\leq  \sum_{i=k+1}^{r}\hat{p}_i^2=\parallel X\hat{\beta}_{OLS}-X\hat{\beta}_{PCR}^{k}\parallel^{2}$.
This last result was proved earlier by \cite{JONG93}. A decade later \cite{PHATAK02} established a new proof of this result based on the connection between PLS and CG. Here, we have provided a very short proof of this particular feature of the PLS estimator.

\section{Mean Square Error}
\label{section:MSE}
In this section, we investigate the PLS Mean Square Error.
\subsection{Main result}
To evaluate the distance between the true and estimated parameter, a natural way consists in measuring the Mean Square Error (MSE) of the estimator defined by
$$MSE(\hat{\beta}_k):=\mathbb{E}\left[ \parallel X(\beta^*-\hat{\beta}_k)\parallel^{2} \right] $$
which is closely related to the prediction error.

Our main result is the following proposition.
\begin{prop}
\label{prop:MSE}
\begin{equation}
\label{eq:MSE}
\parallel X\beta^*-X\hat{\beta}_k\parallel^{2}=\sum_{i=1}^{r}\hat{Q}_k(\lambda_i)p_i^2+ \sum_{i=1}^{r}\left( 1-\hat{Q}_k(\lambda_i)\right)\varepsilon_i ^2.
\end{equation}
\end{prop}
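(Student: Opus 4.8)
The plan is to expand both $X\beta^*$ and $X\hat{\beta}_k$ in the orthonormal basis $u_1,\dots,u_n$ of $\mathbb{R}^n$ and compute the squared norm of the difference componentwise. Since $X\beta^*$ lies in the column space of $X$, which is spanned by $u_1,\dots,u_r$, one has $X\beta^* = \sum_{i=1}^{r} p_i u_i$ (recall $p_i=(X\beta^*)^Tu_i=\sqrt{\lambda_i}\tilde{\beta}^*_i$ vanishes for $i>r$), while by the formulas collected in Subsection~\ref{subsection:interest}, $X\hat{\beta}_k = \sum_{i=1}^{r}\left(1-\hat{Q}_k(\lambda_i)\right)\hat{p}_i u_i$. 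Subtracting and substituting $\hat{p}_i = p_i + \tilde{\varepsilon}_i$ gives
\[
X\beta^* - X\hat{\beta}_k = \sum_{i=1}^{r}\left[\hat{Q}_k(\lambda_i)\,p_i - \bigl(1-\hat{Q}_k(\lambda_i)\bigr)\tilde{\varepsilon}_i\right]u_i,
\]
so that, by orthonormality of the $u_i$,
\[
\|X\beta^* - X\hat{\beta}_k\|^2 = \sum_{i=1}^{r}\left[\hat{Q}_k(\lambda_i)\,p_i - \bigl(1-\hat{Q}_k(\lambda_i)\bigr)\tilde{\varepsilon}_i\right]^2,
\]
where $\varepsilon_i$ in the statement is understood as $\tilde{\varepsilon}_i=\varepsilon^Tu_i$.

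The next step is to expand the square into three groups, $\sum_i \hat{Q}_k(\lambda_i)^2 p_i^2$, the cross term $-2\sum_i \hat{Q}_k(\lambda_i)\bigl(1-\hat{Q}_k(\lambda_i)\bigr)p_i\tilde{\varepsilon}_i$, and $\sum_i \bigl(1-\hat{Q}_k(\lambda_i)\bigr)^2\tilde{\varepsilon}_i^2$, and to compare this with the claimed right-hand side $\sum_i \hat{Q}_k(\lambda_i)p_i^2 + \sum_i\bigl(1-\hat{Q}_k(\lambda_i)\bigr)\tilde{\varepsilon}_i^2$. Writing $q_i:=\hat{Q}_k(\lambda_i)$ and using the elementary identities $q_i^2-q_i = -q_i(1-q_i)$ and $(1-q_i)^2-(1-q_i) = -q_i(1-q_i)$, the difference between the expanded square and the target reduces to
\[
-\sum_{i=1}^{r} q_i(1-q_i)\left(p_i^2 + 2p_i\tilde{\varepsilon}_i + \tilde{\varepsilon}_i^2\right) = -\sum_{i=1}^{r} q_i(1-q_i)\,\hat{p}_i^2 .
\]

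To conclude, I would invoke the second point of Lemma~\ref{lem:properties}: expanding $\hat{p}_i^2$ on both sides of $\sum_i q_i^2\hat{p}_i^2 = \sum_i q_i\hat{p}_i^2$ shows that this is precisely equivalent to $\sum_{i=1}^{r} q_i(1-q_i)\hat{p}_i^2 = 0$, so the above difference vanishes and formula~(\ref{eq:MSE}) follows. The only non-bookkeeping ingredient is Lemma~\ref{lem:properties}(2), which is itself just the Pythagorean identity $\|Y-\hat{\Pi}_kY\|^2 = Y^T(I-\hat{\Pi}_k)Y$ for the orthogonal projector $\hat{\Pi}_k$ onto the relevant Krylov subspace; I therefore expect the only subtle point to be noticing that this single orthogonality relation simultaneously kills the cross term and turns each $\hat{Q}_k(\lambda_i)^2$ into $\hat{Q}_k(\lambda_i)$ (and each $(1-\hat{Q}_k(\lambda_i))^2$ into $1-\hat{Q}_k(\lambda_i)$), rather than any genuine analytic difficulty.
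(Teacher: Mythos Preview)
Your proof is correct and uses the same key ingredient as the paper's: the expansion of $X\beta^*-X\hat\beta_k$ in the $u_i$-basis together with Lemma~\ref{lem:properties}(2). The paper groups the $i$th coordinate as $p_i-(1-\hat Q_k(\lambda_i))\hat p_i$ before squaring, and in a closing remark explicitly asserts that your grouping $\hat Q_k(\lambda_i)p_i-(1-\hat Q_k(\lambda_i))\tilde\varepsilon_i$ ``would not have been able'' to yield the formula; your computation shows this remark is mistaken, since the discrepancy from the target is exactly $-\sum_i \hat Q_k(\lambda_i)\bigl(1-\hat Q_k(\lambda_i)\bigr)\hat p_i^{\,2}$, which vanishes by the very same lemma.
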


\begin{proof}
We have $X\hat{\beta}_k=\sum_{i=1}^{r}\left(1- \hat{Q}_k(\lambda_i)\right)\hat{p}_iu_i$ (see Subsection \ref{subsection:interest}). Therefore
\begin{equation}
\label{eq:proofMSE}
 \parallel X\beta^*-X\hat{\beta}_k\parallel^{2}=\parallel \sum_{i=1}^{r}p_iu_i-\sum_{i=1}^{r}\left(1- \hat{Q}_k(\lambda_i)\right)\hat{p}_iu_i \parallel^{2}=\sum_{i=1}^{r}\left[ p_i-\left(1- \hat{Q}_k(\lambda_i)\right) \hat{p}_i\right]^{2}
 \end{equation}
 $$=\sum_{i=1}^{r}p_i^2-2\sum_{i=1}^{r}\left(1- \hat{Q}_k(\lambda_i)\right) p_i\hat{p}_i+\sum_{i=1}^{r}\left(1- \hat{Q}_k(\lambda_i)\right) \hat{p}_i^2$$
 because $\sum_{i=1}^{r}\left(1- \hat{Q}_k(\lambda_i)\right)^2 \hat{p}_i^2=\sum_{i=1}^{r}\left(1- \hat{Q}_k(\lambda_i)\right) \hat{p}_i^2$ (cf. Lemma \ref{lem:properties})
 $$=\sum_{i=1}^{r}p_i^2-\sum_{i=1}^{r}\left(1- \hat{Q}_k(\lambda_i)\right) \hat{p}_ip_i+\sum_{i=1}^{r}\left(1- \hat{Q}_k(\lambda_i)\right) \hat{p}_i\varepsilon_{i}$$
 using that $\hat{p}_i^2=\hat{p}_i(p_i+\varepsilon_{i})$
 $$=\sum_{i=1}^{r}p_i^2-\sum_{i=1}^{r}\left(1- \hat{Q}_k(\lambda_i)\right) p_i^2-\sum_{i=1}^{r}\left(1- \hat{Q}_k(\lambda_i)\right) p_i\varepsilon_i+\sum_{i=1}^{r}\left(1- \hat{Q}_k(\lambda_i)\right) p_i\varepsilon_{i}+\sum_{i=1}^{r}\left(1- \hat{Q}_k(\lambda_i)\right) \varepsilon_{i}^2$$
 $$=\sum_{i=1}^{r}\hat{Q}_k(\lambda_i)p_i^2+\sum_{i=1}^{r}\left( 1-\hat{Q}_k(\lambda_i)\right)\varepsilon_i ^2.$$
 Notice that, in Equation (\ref{eq:proofMSE}), if we first write that $$\parallel \sum_{i=1}^{r}p_iu_i-\sum_{i=1}^{r}\left(1- \hat{Q}_k(\lambda_i)\right)\hat{p}_iu_i \parallel^{2}=\sum_{i=1}^{r}\left[ \hat{Q}_k(\lambda_i)p_i-\left(1-\hat{Q}_k(\lambda_i)\right) \varepsilon_i\right] ^{2}$$ and then expand the square we would not have been able to derive an expression for the MSE as simple as the one established in Proposition \ref{prop:MSE}.
 \end{proof}
 The reals $\left(\hat{Q}_k(\lambda_i) \right)_{1\leq i\leq r} $ are random so that we cannot establish, from Proposition \ref{prop:MSE}, a classical bias-variance decomposition for the PLS estimator. In fact, in the light of the expression of the residual polynomials (cf. Theorem \ref{theo: expression-det-2}), it seems quite difficult and even infeasible to compute the variance of $\hat{\beta}_k$ or the one of $X\hat{\beta}_k$. Indeed, the variances along the eigenvectors directions are not obvious and even not mutually independant.
But we can compare Formula (\ref{eq:MSE}) to the bias-variance decomposition obtained in the case of a shrinkage estimator with deterministic filter factors.
Actually, it is well known that for an estimator $\hat{\beta}_{S}$ of $\beta^*$ of the form
$$\hat{\beta}_{S}=\sum_{i=1}^{r}f(\lambda_i)\frac{\hat{p}_i}{\sqrt{\lambda_i}}u_i $$
we have 
\begin{equation}
\label{eq:derteministic}
MSE(\hat{\beta}_{S})=\sum_{i=1}^{r}\left(1-f(\lambda_i) \right) ^{2}p_i^2+ \sigma^{2}\sum_{i=1}^{r}\left( f(\lambda_i)\right)^2.
\end{equation}

Therefore, recording that $f_i^{(k)}:=1-\hat{Q}_k(\lambda_i)$ are the filter factors of the PLS estimator (see Section \ref{section:filter}), we can obtain from Proposition \ref{prop:MSE} an expression similar to (\ref{eq:derteministic}):
\begin{equation}
\label{eq:random}
 \parallel X\beta^*-\hat{\beta}_k\parallel^{2}=\sum_{i=1}^{r}\left( 1-f_i^{(k)}\right)p_i^2+ \sum_{i=1}^{r}f_i^{(k)}\varepsilon_i ^2
 \end{equation}

and 
$$
MSE(\hat{\beta}_k)=\sum_{i=1}^{r}\mathbb{E}\left[ 1-f_i^{(k)} \right] p_i^2+ \sum_{i=1}^{r}\mathbb{E}\left[f_i^{(k)}\varepsilon_i ^2 \right] .
$$
However, this is not a classical bias variance decomposition. 
Of course, in the deterministic case, a filter factor larger than one always increases the MSE compared to the one of the OLS because this implies an increase of both the bias and the variance (see Equation (\ref{eq:derteministic})). Because the PLS filter factors can be larger than one, \cite{FRANCK93} proposed to bound by one the absolute value of the PLS shrinkage factors larger than one. In other words they propose to define
$$\tilde{f}_i^{(k)}=\left\lbrace \begin{array}{ll}
+1& \mbox{if}~ f_i^{(k)}>+1\\
-1&  \mbox{if}~ f_i^{(k)}<-1\\
f_i^{(k)}& \mbox{otherwise}
\end{array}\right.$$
and $\tilde{\beta}_k=\sum_{i=1}^{r}\tilde{f}_i^{(k)}\dfrac{\hat{p}_i}{\sqrt{\lambda_i}}v_i$ as a new estimator of $\beta^{*}$ derived from the PLS one.
 However, as pointed out by \cite{KRA07}, a filter factor larger than one in the case of PLS does not necessarily imply a larger MSE. So bounding the absolute value of the PLS shrinkage factors by one not always lead to a better MSE. She precises that it is not clear why we have such a peculiar behaviour of the PLS filter factor. However she illustrates this point through simulations. Here, having a look at Equation (\ref{eq:random}), we better understand why such a particular behaviour of the PLS estimator. Actually, from Equation (\ref{eq:random}), we see that the filter factors or their difference to one are not squared, contrary to what happens in the case of deterministic filter factors (see Equation (\ref{eq:derteministic})). So a filter factor $f_i^{(k)}$ larger than one does not necessarily increase the MSE because it can be balanced by $\left( 1-f_i^{(k)}\right)p_i^2$ which in this case will be negative.

\subsection{Another decomposition of the MSE}
\subsubsection{Decomposition of the MSE through projection onto Krylov subspaces}
Let $\hat{\Pi}_{k}$ be the orthogonal projector onto the space spanned by $(XX^T)Y,...,(XX^T)^kY$. Because $X\hat{\beta}_k=\hat{\Pi}_{k}Y$, we have
$$ \parallel X\beta^*-X\hat{\beta}_k\parallel^{2}=\parallel X\beta^*-\hat{\Pi}_{k}X\beta^*\parallel^{2}+\parallel \hat{\Pi}_{k}\varepsilon\parallel^{2}.$$

\begin{lem}
\label{lem:decompo1}
\begin{equation}
\label{lem:eq1}
 \parallel X\beta^*-\hat{\Pi}_{k}X\beta^*\parallel^{2}=\sum_{i=1}^{r}\hat{Q}_k(\lambda_i)
\hat{p}_ip_i -\frac{1}{C}\left| \begin{array}{cccc}
\sum_{j=1}^{r}\varepsilon_{j}p_j& \sum_{j=1}^{r}\lambda_{j}p_j\hat{p}_{j} &...  & \sum_{j=1}^{r}\lambda_{j}^{k}p_j\hat{p}_{j} \\ 
\sum_{j=1}^{r}\lambda_{j}\varepsilon_{j}\hat{p}_{j}& \sum_{j=1}^{r}\lambda_{j}^{2}\hat{p}_{j}&  & \sum_{j=1}^{r}\lambda_{j}^{k+1}\hat{p}_{j}^{2} \\
 \vdots &  &  &  \\  
\sum_{j=1}^{r}\lambda_{j}^{k}\varepsilon_{j}\hat{p}_{j}& \sum_{j=1}^{r}\lambda_{j}^{k+1}\hat{p}_{j}^{2}&  & \sum_{j=1}^{r}\lambda_{j}^{2k}\hat{p}_{j}^{2}
\end{array} \right|,
\end{equation}
where $C:=\sum_{(j_{1},..,j_{k})\in I^{+}_{k}} \hat{p}_{j_{1}}^{2}...\hat{p}_{j_{k}}^{2}\lambda_{j_{1}}^{2}...\lambda_{j_{k}}^{2}V(\lambda_{j_{1}},...,\lambda_{j_{k}})^{2}$.
\end{lem}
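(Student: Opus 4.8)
The plan is to reduce $\parallel X\beta^*-\hat\Pi_{k}X\beta^*\parallel^{2}$ to two inner products evaluated on the left singular basis of $X$, the second of which is a projection onto a Krylov subspace that I would expand with the Gram/Schur-complement representation of an orthogonal projection. Throughout I assume $k<r$, so that $\hat\Pi_{k}$ projects onto a genuinely $k$-dimensional space (the case $k=r$ being trivial, both sides vanishing).

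\emph{First} I would use that $I-\hat\Pi_{k}$ is a symmetric idempotent, so $\parallel X\beta^*-\hat\Pi_{k}X\beta^*\parallel^{2}=(X\beta^*)^{T}(I-\hat\Pi_{k})X\beta^*$. Substituting $X\beta^*=Y-\varepsilon$ in the right factor and using the already-recorded facts $X\hat\beta_{k}=\hat\Pi_{k}Y$ and $(I-\hat\Pi_{k})Y=Y-X\hat\beta_{k}=\hat Q_{k}(XX^{T})Y$, this splits as
$$\parallel X\beta^*-\hat\Pi_{k}X\beta^*\parallel^{2}=(X\beta^*)^{T}\hat Q_{k}(XX^{T})Y-(X\beta^*)^{T}(I-\hat\Pi_{k})\varepsilon .$$
Expanding $X\beta^*=\sum_{i=1}^{r}p_{i}u_{i}$ and $\hat Q_{k}(XX^{T})Y$ on the orthonormal basis $u_{1},\dots,u_{n}$ as in Subsection~\ref{subsection:interest}, and using $p_{i}=0$ for $i>r$, the first summand collapses to $\sum_{i=1}^{r}\hat Q_{k}(\lambda_{i})\hat p_{i}p_{i}$, which is the first term of the claim. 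It thus remains to show $(X\beta^*)^{T}(I-\hat\Pi_{k})\varepsilon=\frac{1}{C}\mathcal{D}$, where $\mathcal{D}$ denotes the $(k+1)\times(k+1)$ determinant in the statement.

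\emph{Next}, write $\hat\Pi_{k}$ as the orthogonal projector onto $W_{k}:=\mathrm{span}\{w_{1},\dots,w_{k}\}$ with $w_{m}:=(XX^{T})^{m}Y=\sum_{i=1}^{r}\lambda_{i}^{m}\hat p_{i}u_{i}$, a $k$-dimensional space since the maximal Krylov dimension is assumed to be $r>k$. With $W:=[w_{1}\,|\,\cdots\,|\,w_{k}]$ and $G:=W^{T}W$ the (invertible) Gram matrix, one has $a^{T}\hat\Pi_{k}b=\mathbf{s}^{T}G^{-1}\mathbf{t}$ for $\mathbf{s}:=W^{T}a$, $\mathbf{t}:=W^{T}b$; applying the block-determinant (Schur) formula to the matrix with corner $a^{T}b$, top row $\mathbf{s}^{T}$, first column $\mathbf{t}$ and lower-right block $G$ shows its determinant equals $\det G\,(a^{T}b-\mathbf{s}^{T}G^{-1}\mathbf{t})=\det G\cdot a^{T}(I-\hat\Pi_{k})b$. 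Taking $a=X\beta^*$ and $b=\varepsilon$ and using $w_{m}=\sum_{i=1}^{r}\lambda_{i}^{m}\hat p_{i}u_{i}$, the entries evaluate to $a^{T}b=\sum_{j}\varepsilon_{j}p_{j}$, $w_{m}^{T}a=\sum_{j}\lambda_{j}^{m}p_{j}\hat p_{j}$, $w_{m}^{T}b=\sum_{j}\lambda_{j}^{m}\varepsilon_{j}\hat p_{j}$ and $G_{mn}=\sum_{j}\lambda_{j}^{m+n}\hat p_{j}^{2}$ (with $\varepsilon_{j}$ meaning $\tilde\varepsilon_{j}=\varepsilon^{T}u_{j}$), which is exactly the bordered matrix whose determinant is $\mathcal{D}$. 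Hence $(X\beta^*)^{T}(I-\hat\Pi_{k})\varepsilon=\mathcal{D}/\det G$.

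\emph{Finally} I would identify $\det G$ with $C$. Writing $G=BB^{T}$ with $B=(\lambda_{i}^{m}\hat p_{i})_{1\le m\le k,\,1\le i\le r}$, the Cauchy--Binet formula expresses $\det G$ as the sum over $k$-subsets $\{j_{1}>\dots>j_{k}\}$ of the squared maximal minors of $B$; factoring $\hat p_{j_{l}}$ and then $\lambda_{j_{l}}$ out of the $l$-th column turns each minor into $\hat p_{j_{1}}\cdots\hat p_{j_{k}}\lambda_{j_{1}}\cdots\lambda_{j_{k}}V(\lambda_{j_{1}},\dots,\lambda_{j_{k}})$ — the same Vandermonde manipulation used to derive Formula~(\ref{eq:final-expression-residuals}) — so $\det G=\sum_{I^{+}_{k}}\hat p_{j_{1}}^{2}\cdots\hat p_{j_{k}}^{2}\lambda_{j_{1}}^{2}\cdots\lambda_{j_{k}}^{2}V(\lambda_{j_{1}},\dots,\lambda_{j_{k}})^{2}=C$. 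Combining the three steps proves the lemma. The only genuine care needed is in fixing the sign and layout of the bordered-determinant identity and in checking $\dim W_{k}=k$ (hence $\det G\neq 0$); the matching of entries is routine bookkeeping, and one could even shortcut the middle step by invoking the Gram-determinant representation of orthogonal projections already mentioned in the proof of Proposition~\ref{prop:empirical_risk}.
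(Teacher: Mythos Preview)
Your argument is correct. Both you and the paper ultimately rely on the Gram-determinant representation of an orthogonal projection and on the identification $\det G=C$ via the Vandermonde/Cauchy--Binet computation already used for Formula~(\ref{eq:final-expression-residuals}); the difference lies in \emph{where} the splitting $X\beta^*=Y-\varepsilon$ is performed. The paper writes $\parallel X\beta^*-\hat\Pi_{k}X\beta^*\parallel^{2}=G_{1}/G_{2}$ directly as a ratio of Gram determinants and then splits $G_{1}$ by substituting $p_{i}=\hat p_{i}-\varepsilon_{i}$ inside the first column, which requires a multilinear expansion of the $(k+1)\times(k+1)$ determinant before the two pieces can be recognised. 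You instead split at the level of the bilinear form $(X\beta^*)^{T}(I-\hat\Pi_{k})(Y-\varepsilon)$: the $Y$-part is read off spectrally as $\sum_{i}\hat Q_{k}(\lambda_{i})\hat p_{i}p_{i}$ with no determinant work at all, and only the $\varepsilon$-part invokes the bordered-determinant (Schur complement) identity. This buys a shorter and more transparent derivation, at the mild cost of quoting the Schur formula rather than deriving everything from the full Gram ratio; conversely, the paper's route keeps the computation self-contained and makes the parallel with the derivation of $\hat Q_{k}$ in Theorem~\ref{theo: expression-det-2} explicit. Your caveats (checking $\dim W_{k}=k$ so that $\det G\neq 0$, and interpreting $\varepsilon_{j}$ as $\tilde\varepsilon_{j}=\varepsilon^{T}u_{j}$) are exactly the right ones.
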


Having a look to Equation (\ref{lem:eq1}), we see that there is no hope to provide a simple expression for the expectation of $\parallel X\beta^*-\hat{\Pi}_{k}X\beta^*\parallel^{2}$. However, Lemma \ref{lem:decompo2} below show that the expression of $\parallel X\beta^*-\hat{\Pi}_{k}X\beta^*\parallel^{2}$ can simplify in part with the one of $\parallel \hat{\Pi}_{k}\varepsilon\parallel^{2}$.

\begin{lem}
\label{lem:decompo2}
\begin{equation}
\parallel \hat{\Pi}_{k}\varepsilon\parallel^{2}= \sum_{i=1}^{r}\varepsilon_{i}^2+\sum_{i=1}^{r}\hat{Q}_k(\lambda_i)\hat{p}_i\varepsilon_i-\frac{1}{C}\left| \begin{array}{cccc}
\sum_{j=1}^{r}\varepsilon_{j}p_j& \sum_{j=1}^{r}\lambda_{j}\varepsilon_j\hat{p}_{j} &...  & \sum_{j=1}^{r}\lambda_{j}^{k}\varepsilon_j\hat{p}_{j} \\ 
\sum_{j=1}^{r}\lambda_{j}p_{j}\hat{p}_{j}& \sum_{j=1}^{r}\lambda_{j}^{2}\hat{p}_{j}&  & \sum_{j=1}^{r}\lambda_{j}^{k+1}\hat{p}_{j}^{2} \\
 \vdots &  &  &  \\  
\sum_{j=1}^{r}\lambda_{j}^{k}p_{j}\hat{p}_{j}& \sum_{j=1}^{r}\lambda_{j}^{k+1}\hat{p}_{j}^{2}&  & \sum_{j=1}^{r}\lambda_{j}^{2k}\hat{p}_{j}^{2}
\end{array} \right|,
\end{equation}
where we recall that $C:=\sum_{(j_{1},..,j_{k})\in I^{+}_{k}} \hat{p}_{j_{1}}^{2}...\hat{p}_{j_{k}}^{2}\lambda_{j_{1}}^{2}...\lambda_{j_{k}}^{2}V(\lambda_{j_{1}},...,\lambda_{j_{k}})^{2}.$
\end{lem}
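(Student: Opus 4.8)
The plan is to evaluate $\|\hat\Pi_k\varepsilon\|^2$ by splitting $\varepsilon=Y-X\beta^*$, using that $\hat\Pi_kY=X\hat\beta_k$ is already known explicitly in the eigenbasis of $X$, and handling $\hat\Pi_kX\beta^*$ through the Gram‑determinant representation of an orthogonal projection --- exactly as in the alternative derivation of Proposition~\ref{prop:empirical_risk}. Concretely, I would set $w_j:=(XX^T)^jY=\sum_{i=1}^r\lambda_i^j\hat p_iu_i$ for $j=1,\dots,k$, so that $\hat\Pi_k$ is the orthogonal projector onto $\mathrm{span}(w_1,\dots,w_k)$ with Gram matrix $G=(\langle w_a,w_b\rangle)_{1\le a,b\le k}$, $G_{ab}=\sum_{i=1}^r\lambda_i^{a+b}\hat p_i^2$ (invertible, since the maximal Krylov dimension is $r\ge k$). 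As $\hat\Pi_k$ is a self‑adjoint idempotent, $\|\hat\Pi_k\varepsilon\|^2=\langle\hat\Pi_k\varepsilon,\varepsilon\rangle$, and by linearity $\hat\Pi_k\varepsilon=\hat\Pi_kY-\hat\Pi_kX\beta^*=X\hat\beta_k-\hat\Pi_kX\beta^*$, whence
$$\|\hat\Pi_k\varepsilon\|^2=\langle X\hat\beta_k,\varepsilon\rangle-\langle\hat\Pi_kX\beta^*,\varepsilon\rangle .$$
For the first term I would substitute $X\hat\beta_k=\sum_{i=1}^r(1-\hat Q_k(\lambda_i))\hat p_iu_i$ (Subsection~\ref{subsection:interest}) and $\langle u_i,\varepsilon\rangle=\tilde\varepsilon_i$, then use $\hat p_i=p_i+\tilde\varepsilon_i$ to write $\sum_i\hat p_i\tilde\varepsilon_i=\sum_i\tilde\varepsilon_i^2+\sum_ip_i\tilde\varepsilon_i$; this produces the $\sum_{i=1}^r\tilde\varepsilon_i^2$ term, the $\hat Q_k(\lambda_i)\hat p_i\tilde\varepsilon_i$ term, and an auxiliary $\sum_ip_i\tilde\varepsilon_i$ that will cancel. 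Only indices $i\le r$ occur (since $\hat\Pi_k$ maps into $\mathrm{span}(u_1,\dots,u_r)$), so the dichotomy $r=n$ versus $r<n$ plays no role.

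The heart of the argument is $\langle\hat\Pi_kX\beta^*,\varepsilon\rangle$. Writing $\hat\Pi_kX\beta^*=\sum_{a,b}\langle X\beta^*,w_a\rangle(G^{-1})_{ab}w_b$ turns it into $\sum_{a,b}\langle X\beta^*,w_a\rangle(G^{-1})_{ab}\langle w_b,\varepsilon\rangle$ with $\langle X\beta^*,w_a\rangle=\sum_j\lambda_j^ap_j\hat p_j$ and $\langle w_b,\varepsilon\rangle=\sum_j\lambda_j^b\hat p_j\tilde\varepsilon_j$. The Schur‑complement (equivalently, Cramer) identity for the bordered matrix then gives $\langle\hat\Pi_kX\beta^*,\varepsilon\rangle=\langle X\beta^*,\varepsilon\rangle-\frac1{\det G}\,\Delta$, where $\Delta$ is the $(k+1)\times(k+1)$ determinant obtained from $G$ by adjoining the column $(\langle X\beta^*,w_a\rangle)_a$, the row $(\langle w_b,\varepsilon\rangle)_b$, and the corner $\langle X\beta^*,\varepsilon\rangle=\sum_jp_j\tilde\varepsilon_j$; after inserting the explicit inner products, $\Delta$ is precisely the determinant displayed in the statement. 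It remains to identify $\det G=C$, which follows by Cauchy--Binet: $G=NN^T$ with $N=(\lambda_i^a\hat p_i)_{a,i}$, so $\det G=\sum_{r\ge j_1>\dots>j_k\ge1}\bigl(\prod_l\lambda_{j_l}\hat p_{j_l}\bigr)^2V(\lambda_{j_1},\dots,\lambda_{j_k})^2=C$, the same generalized‑Vandermonde factorisation used to prove Theorem~\ref{theo: expression-det-2} and formula~(\ref{eq:final-expression-residuals}). Substituting both evaluations into the displayed identity and cancelling $\sum_jp_j\tilde\varepsilon_j$ yields the announced formula; Lemma~\ref{lem:decompo1} comes out of the same computation applied to $\|X\beta^*-\hat\Pi_kX\beta^*\|^2=\|X\beta^*\|^2-\langle\hat\Pi_kX\beta^*,X\beta^*\rangle$.

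The main obstacle is bookkeeping rather than conceptual. One must (i) track the sign in the bordered‑determinant identity: the corner entry is $\langle X\beta^*,\varepsilon\rangle$, not $0$, so the Schur complement returns $\langle X\beta^*,\varepsilon\rangle-\Delta/\det G$, and it is precisely this extra term that cancels the auxiliary $\sum_ip_i\tilde\varepsilon_i$ --- so the signs of the $\hat Q_k(\lambda_i)\hat p_i\tilde\varepsilon_i$ and $\Delta/C$ terms in the final formula have to be read off with care; (ii) check that the row and column adjoined to $\Delta$ match the precise rows and columns of the displayed matrix (they are transposes of each other, so $\Delta$ is unambiguous); and (iii) recover $C$ by expanding $\det N_{[j_1,\dots,j_k]}=\prod_l(\lambda_{j_l}\hat p_{j_l})\,V(\lambda_{j_1},\dots,\lambda_{j_k})$ exactly as in the proof of~(\ref{eq:final-expression-residuals}). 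Everything else is linear algebra in the eigenbasis of $X$ together with the relation $\hat p_i=p_i+\tilde\varepsilon_i$.
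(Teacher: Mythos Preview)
Your approach is correct, but it follows a genuinely different route from the paper. The paper argues by complement: it writes $\|\hat\Pi_k\varepsilon\|^2=\|\varepsilon\|^2-\|(I-\hat\Pi_k)\varepsilon\|^2$ and then observes that $\|(I-\hat\Pi_k)\varepsilon\|^2$ is obtained from Lemma~\ref{lem:decompo1} simply by interchanging the roles of $p_i$ and $\tilde\varepsilon_i$ (since $\hat p_i=p_i+\tilde\varepsilon_i$ is symmetric under this swap and $X\beta^*$, $\varepsilon$ enter the Krylov construction symmetrically). This makes Lemma~\ref{lem:decompo2} essentially a one-line corollary of Lemma~\ref{lem:decompo1}. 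Your direct computation --- splitting $\hat\Pi_k\varepsilon=X\hat\beta_k-\hat\Pi_kX\beta^*$, evaluating $\langle X\hat\beta_k,\varepsilon\rangle$ in the eigenbasis, and expressing $\langle\hat\Pi_kX\beta^*,\varepsilon\rangle$ through the Schur-complement/bordered-determinant identity --- is more self-contained (it does not presuppose Lemma~\ref{lem:decompo1}) and makes the appearance of the determinant term and of the cancelling cross-term $\sum_ip_i\tilde\varepsilon_i$ very transparent, at the price of slightly more bookkeeping. Both proofs rest on the same Gram-determinant machinery and the Cauchy--Binet identification $\det G=C$.

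Regarding your caution~(i) on signs: your computation gives $\|\hat\Pi_k\varepsilon\|^2=\sum_{i=1}^r\tilde\varepsilon_i^{\,2}-\sum_{i=1}^r\hat Q_k(\lambda_i)\hat p_i\tilde\varepsilon_i+\Delta/C$, and this is exactly what the paper's own complement argument produces as well (subtracting its displayed expression for $\|(I-\hat\Pi_k)\varepsilon\|^2$ from $\|\varepsilon\|^2$); the signs on the last two terms in the printed statement should therefore be read accordingly. Either set of signs, of course, leads to the same Proposition~\ref{prop:MSE} after combining with Lemma~\ref{lem:decompo1}, since the two determinants are transposes of one another and cancel.
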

\begin{proof}
$$\parallel \hat{\Pi}_{k}\varepsilon\parallel^{2}=\parallel \varepsilon\parallel^{2}-\parallel \varepsilon- \hat{\Pi}_{k}\varepsilon\parallel^{2}.$$
Then, using similar arguments as those used to prove Lemma \ref{lem:decompo1} (by reversing the role played by $p_i$ and $\varepsilon_i$), we get $$\parallel \varepsilon- \hat{\Pi}_{k}\varepsilon\parallel^{2}=$$
$$ \sum_{i=r+1}^{n}\varepsilon_{i}^2+\sum_{i=1}^{r}\hat{Q}_k(\lambda_i)\hat{p}_i\varepsilon_i-\frac{1}{C}\left| \begin{array}{cccc}
\sum_{j=1}^{r}\varepsilon_{j}p_j& \sum_{j=1}^{r}\lambda_{j}\varepsilon_j\hat{p}_{j} &...  & \sum_{j=1}^{r}\lambda_{j}^{k}\varepsilon_j\hat{p}_{j} \\ 
\sum_{j=1}^{r}\lambda_{j}p_{j}\hat{p}_{j}& \sum_{j=1}^{r}\lambda_{j}^{2}\hat{p}_{j}&  & \sum_{j=1}^{r}\lambda_{j}^{k+1}\hat{p}_{j}^{2} \\
 \vdots &  &  &  \\  
\sum_{j=1}^{r}\lambda_{j}^{k}p_{j}\hat{p}_{j}& \sum_{j=1}^{r}\lambda_{j}^{k+1}\hat{p}_{j}^{2}&  & \sum_{j=1}^{r}\lambda_{j}^{2k}\hat{p}_{j}^{2}
\end{array} \right|.$$
\end{proof}

Here again, we see that the expression of $\parallel \hat{\Pi}_{k}\varepsilon\parallel^{2}$ depends in an intricated way of the noise. So that it is not feasible to calculate its expectation. However, from Lemma \ref{lem:decompo1} and \ref{lem:decompo2}, we deduce
\begin{prop}
\begin{equation}
\label{eq:MSE2}
\parallel X\beta^*-X\hat{\beta}_k\parallel^{2}=\sum_{i=1}^{r}\hat{Q}_k(\lambda_i)
\hat{p}_ip_i+\sum_{i=1}^{r}\varepsilon_{i}^2-\sum_{i=1}^{r}\hat{Q}_k(\lambda_i)\hat{p}_i\varepsilon_i.\end{equation}
\end{prop}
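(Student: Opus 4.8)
The plan is to combine the orthogonal decomposition recorded just before Lemma \ref{lem:decompo1} with the two lemmas themselves. Since $X\hat{\beta}_k=\hat{\Pi}_{k}Y=\hat{\Pi}_{k}X\beta^*+\hat{\Pi}_{k}\varepsilon$, one has $X\beta^*-X\hat{\beta}_k=(I-\hat{\Pi}_{k})X\beta^*-\hat{\Pi}_{k}\varepsilon$, where the first summand lies in the orthogonal complement of the range of $\hat{\Pi}_{k}$ and the second lies in that range; the Pythagorean theorem then gives
$$\parallel X\beta^*-X\hat{\beta}_k\parallel^{2}=\parallel X\beta^*-\hat{\Pi}_{k}X\beta^*\parallel^{2}+\parallel \hat{\Pi}_{k}\varepsilon\parallel^{2},$$
so it only remains to add the right-hand sides supplied by Lemma \ref{lem:decompo1} and Lemma \ref{lem:decompo2}.

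The key step is that the two $(k+1)\times(k+1)$ determinants occurring in those lemmas cancel. In both of them the lower-right $k\times k$ block is the same symmetric moment (Hankel-type) matrix with entries $\sum_{j=1}^{r}\lambda_{j}^{a+b}\hat{p}_{j}^{2}$, the $(1,1)$ entry $\sum_{j}\varepsilon_j p_j$ is common, and the bordering first row of the determinant of Lemma \ref{lem:decompo1} is precisely the bordering first column of the determinant of Lemma \ref{lem:decompo2}, and vice versa. Hence one determinant is the transpose of the other and they take the same value; with the signs coming out of the two lemmas (for Lemma \ref{lem:decompo2}, reading it through $\parallel\hat{\Pi}_{k}\varepsilon\parallel^{2}=\parallel\varepsilon\parallel^{2}-\parallel\varepsilon-\hat{\Pi}_{k}\varepsilon\parallel^{2}$) they cancel in the sum. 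What is left is exactly $\sum_{i=1}^{r}\hat{Q}_k(\lambda_i)\hat{p}_ip_i+\sum_{i=1}^{r}\varepsilon_{i}^2-\sum_{i=1}^{r}\hat{Q}_k(\lambda_i)\hat{p}_i\varepsilon_i$, which is Equation (\ref{eq:MSE2}).

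The main obstacle is purely the bookkeeping in this cancellation: one has to match the $p$-weighted and $\varepsilon$-weighted borders of the two determinants correctly and keep the signs straight when passing from $\parallel\varepsilon-\hat{\Pi}_{k}\varepsilon\parallel^{2}$ to $\parallel\hat{\Pi}_{k}\varepsilon\parallel^{2}$; once the transpose identification is in place the conclusion is immediate. As an independent sanity check, the claimed right-hand side agrees with the expression $\sum_{i}\hat{Q}_k(\lambda_i)p_i^2+\sum_{i}(1-\hat{Q}_k(\lambda_i))\varepsilon_i^2$ of Proposition \ref{prop:MSE}, since their difference equals $\sum_{i}\hat{Q}_k(\lambda_i)(p_i-\varepsilon_i)\bigl(\hat{p}_i-(p_i+\varepsilon_i)\bigr)$, which vanishes because $\hat{p}_i=p_i+\varepsilon_i$; thus (\ref{eq:MSE2}) is essentially a rewriting of Proposition \ref{prop:MSE} that exhibits the noise contribution through the Krylov projector $\hat{\Pi}_{k}$.
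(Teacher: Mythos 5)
Your proof is correct and follows essentially the same route as the paper: the paper likewise obtains (\ref{eq:MSE2}) by adding Lemma \ref{lem:decompo1} and Lemma \ref{lem:decompo2} through the Pythagorean decomposition $\parallel X\beta^*-X\hat{\beta}_k\parallel^{2}=\parallel X\beta^*-\hat{\Pi}_{k}X\beta^*\parallel^{2}+\parallel \hat{\Pi}_{k}\varepsilon\parallel^{2}$, with the two bordered Gram determinants cancelling because one is the transpose of the other. You also correctly read the signs of Lemma \ref{lem:decompo2} off its proof (via $\parallel\hat{\Pi}_{k}\varepsilon\parallel^{2}=\parallel\varepsilon\parallel^{2}-\parallel\varepsilon-\hat{\Pi}_{k}\varepsilon\parallel^{2}$), which is what makes the determinants cancel rather than add, and your sanity check against Proposition \ref{prop:MSE} coincides with the paper's own remark that (\ref{eq:MSE2}) is equivalent to (\ref{eq:MSE}).
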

Notice that the above expression of the MSE is equivalent to the one stated in Proposition \ref{prop:MSE}. Actually, we could have easily deduce Equation (\ref{eq:MSE2})
from Equation (\ref{eq:MSE}). But, it appears that it was also informative to consider the decomposition through the orthogonal projector onto the Krylov subspaces. Indeed, this shows that the classical decomposition (when dealing with projection onto fixed subspaces) is in the case of PLS much more complicated. Hopefully, simplifications based on the intrinsic properties of this estimator lead to a decomposition easier to study.

\subsubsection{Proof of Lemma \ref{lem:decompo1}.}
\begin{proof}
We can express $ \parallel X\beta^*-\hat{\Pi}_{k}X\beta^*\parallel^{2}$ in terms of the ratio of two Gram determinants:
$$\parallel X\beta^*-\hat{\Pi}_{k}X\beta^*\parallel^{2}=\dfrac{G_1}{G_2}$$
where $$G_1=\left| \begin{array}{cccc}
\left\langle X\beta^*,  X\beta^* \right\rangle  & \left\langle X\beta^*,  XX^TY \right\rangle  &...  & \left\langle X\beta^*,  (XX^T)^kY \right\rangle \\  
 \left\langle X\beta^*,  XX^TY \right\rangle & \left\langle XX^TY ,  XX^TY \right\rangle &  & \left\langle XX^TY ,  (XX^T)^kY \right\rangle \\ 
 \vdots &  &  &  \\
\left\langle X\beta^* ,  (XX^T)^kY \right\rangle & \left\langle XX^TY ,  (XX^T)^kY \right\rangle& ... & \left\langle (XX^T)^{k}Y ,  (XX^T)^kY \right\rangle
\end{array} \right|$$

and 
$$
G_2=\left| \begin{array}{cccc}
\left\langle XX^TY,  XX^TY \right\rangle & \left\langle XX^TY,  (XX^T)^2Y \right\rangle&...  & \left\langle XX^TY,  (XX^T)^kY \right\rangle \\  
 \left\langle (XX^T)^2Y,  XX^TY \right\rangle & \left\langle (XX^T)^2Y,  (XX^T)^2Y \right\rangle&...  & \left\langle (XX^T)^2Y,  (XX^T)^kY \right\rangle \\ 
 \vdots &  &  &  \\
\left\langle (XX^T)^kY ,  XX^TY \right\rangle & \left\langle  (XX^T)^kY,  (XX^T)^2Y \right\rangle& ... & \left\langle (XX^T)^kY ,  (XX^T)^kY \right\rangle
\end{array} \right|.$$
On one hand, using the eigendecomposition of $X$, we have 
$$G_2=
\left| \begin{array}{cccc}
\sum_{j=1}^{r}\lambda_{j}^2\hat{p}_{j}^{2} & \sum_{j=1}^{r}\lambda_{j}^{3}\hat{p}_{j}^{2} &...  & \sum_{j=1}^{r}\lambda_{j}^{k+1}\hat{p}_{j} ^{2}\\ 
\sum_{j=1}^{r}\lambda_{j}^{3}\hat{p}_{j}^{2}& \sum_{j=1}^{r}\lambda_{j}^{4}\hat{p}_{j}^{2}&  & \sum_{j=1}^{r}\lambda_{j}^{k+2}\hat{p}_{j}^{2} \\
 \vdots &  &  &  \\  
\sum_{j=1}^{r}\lambda_{j}^{k+1}\hat{p}_{j}^{2}& \sum_{j=1}^{r}\lambda_{j}^{k+2}\hat{p}_{j}^{2}&  & \sum_{j=1}^{r}\lambda_{j}^{2k}\hat{p}_{j}^{2}
\end{array} \right|= \left| \begin{array}{cccc}
\hat{m}_{1} & \hat{m}_{2} & ... & \hat{m}_{k} \\ 
 \vdots &  &  &  \\ 
\hat{m}_{k-1} & \hat{m}_{k} &  & \hat{m}_{2k-2} \\ 
 \hat{m}_{k}&  \hat{m}_{k+1} & ... &  \hat{m}_{2k-1}
\end{array} \right|$$
where  $\hat{m}_{i}=\int x^{i}\hat{\mu}$.
Therefore 
\begin{equation}
\label{eq:G2}
G_2=\sum_{(j_{1},..,j_{k})\in I^{+}_{k}} \hat{p}_{j_{1}}^{2}...\hat{p}_{j_{k}}^{2}\lambda_{j_{1}}^{2}...\lambda_{j_{k}}^{2}V(\lambda_{j_{1}},...,\lambda_{j_{k}})^{2}.
\end{equation} 
(see one of the argument used in the proof of Theorem 4.1 in \cite{BLAZERE}).

On the other hand, we have
$$G_1=
\left| \begin{array}{cccc}
\sum_{j=1}^{r}p_j^2 & \sum_{j=1}^{r}\lambda_{j}p_j\hat{p}_{j} &...  & \sum_{j=1}^{r}\lambda_{j}^{k}p_j\hat{p}_{j} \\ 
\sum_{j=1}^{r}\lambda_{j}p_j\hat{p}_{j}& \sum_{j=1}^{r}\lambda_{j}^{2}\hat{p}_{j}&  & \sum_{j=1}^{r}\lambda_{j}^{k+1}\hat{p}_{j}^{2} \\
 \vdots &  &  &  \\  
\sum_{j=1}^{r}\lambda_{j}^{k}p_j\hat{p}_{j}& \sum_{j=1}^{r}\lambda_{j}^{k+1}\hat{p}_{j}^{2}&  & \sum_{j=1}^{r}\lambda_{j}^{2k}\hat{p}_{j}^{2}
\end{array} \right|$$
$$= \sum_{i=1}^{r}\sum_{j_{1}=1}^{r}...\sum_{j_k=1}^{r}p_i\hat{p}_{j_1}...\hat{p}_{j_k}\left| \begin{array}{cccc}
p_i & \lambda_{i}\hat{p}_{i} &...  & \lambda_{i}^{k}\hat{p}_{i}\\ 
\lambda_{j_1}p_{j_1}& \lambda_{j_1}^{2}\hat{p}_{j_1}&  & \lambda_{j_1}^{k+1}\hat{p}_{j_1} \\
 \vdots &  &  &  \\  
\lambda_{j_k}^{k}p_{j_k}& \lambda_{j_k}^{k+1}\hat{p}_{j_k}&  & \lambda_{j_k}^{2k}\hat{p}_{j_k}
\end{array} \right| $$
Then, using $\hat{p}_i=p_i+\varepsilon_i$, we get
$$G_1= \sum_{i=1}^{r}\sum_{j_{1}=1}^{r}...\sum_{j_k=1}^{r}p_i\hat{p}_{j_1}...\hat{p}_{j_k}\left| \begin{array}{cccc}
\hat{p}_i & \lambda_{i}\hat{p}_{i} &...  & \lambda_{i}^{k}\hat{p}_{i}\\ 
\lambda_{j_1}\hat{p}_{j_1}& \lambda_{j_1}^{2}\hat{p}_{j_1}&  & \lambda_{j_1}^{k+1}\hat{p}_{j_1} \\
 \vdots &  &  &  \\  
\lambda_{j_k}^{k}\hat{p}_{j_k}& \lambda_{j_k}^{k+1}\hat{p}_{j_k}&  & \lambda_{j_k}^{2k}\hat{p}_{j_k}
\end{array} \right|$$
$$- \sum_{i=1}^{r}\sum_{j_{1}=1}^{r}...\sum_{j_k=1}^{r}p_i\hat{p}_{j_1}...\hat{p}_{j_k}\left| \begin{array}{cccc}
\varepsilon_i & \lambda_{i}\hat{p}_{i} &...  & \lambda_{i}^{k}\hat{p}_{i}\\ 
\lambda_{j_1}\varepsilon_{j_1}& \lambda_{j_1}^{2}\hat{p}_{j_1}&  & \lambda_{j_1}^{k+1}\hat{p}_{j_1} \\
 \vdots &  &  &  \\  
\lambda_{j_k}^{k}\varepsilon_{j_k}& \lambda_{j_k}^{k+1}\hat{p}_{j_k}&  & \lambda_{j_k}^{2k}\hat{p}_{j_k}
\end{array} \right| $$

$$=\sum_{i=1}^{r}p_i\hat{p}_i \sum_{j_{1}=1}^{r}...\sum_{j_k=1}^{r}\hat{p}_{j_1}^2...\hat{p}_{j_k}^{2}\left| \begin{array}{cccc}
 1& \lambda_{i} &...  & \lambda_{i}^{k}\\ 
\lambda_{j_1}& \lambda_{j_1}^{2}&  & \lambda_{j_1}^{k+1} \\
 \vdots &  &  &  \\  
\lambda_{j_k}^{k}& \lambda_{j_k}^{k+1}&  & \lambda_{j_k}^{2k}
\end{array} \right | $$
$$-\left| \begin{array}{cccc}
\sum_{j=1}^{r}\varepsilon_{j}p_j& \sum_{j=1}^{r}\lambda_{j}p_j\hat{p}_{j} &...  & \sum_{j=1}^{r}\lambda_{j}^{k}p_j\hat{p}_{j} \\ 
\sum_{j=1}^{r}\lambda_{j}\varepsilon_{j}\hat{p}_{j}& \sum_{j=1}^{r}\lambda_{j}^{2}\hat{p}_{j}&  & \sum_{j=1}^{r}\lambda_{j}^{k+1}\hat{p}_{j}^{2} \\
 \vdots &  &  &  \\  
\sum_{j=1}^{r}\lambda_{j}^{k}\varepsilon_{j}\hat{p}_{j}& \sum_{j=1}^{r}\lambda_{j}^{k+1}\hat{p}_{j}^{2}&  & \sum_{j=1}^{r}\lambda_{j}^{2k}\hat{p}_{j}^{2}
\end{array} \right|$$
$$=\sum_{i=1}^{r}p_i\hat{p}_i\left[ \sum_{(j_{1},..,j_{k})\in I^{+}_{k}} \hat{p}_{j_{1}}^{2}...\hat{p}_{j_{k}}^{2}\lambda_{j_{1}}^{2}...\lambda_{j_{k}}^{2}V(\lambda_{j_{1}},...,\lambda_{j_{k}})^{2} \prod_{l=1}^{k}(1-\frac{x}{\lambda_{j_{l}}})\right] $$
\begin{equation}
\label{eq:G1}
-\left| \begin{array}{cccc}
\sum_{j=1}^{r}\varepsilon_{j}p_j& \sum_{j=1}^{r}\lambda_{j}p_j\hat{p}_{j} &...  & \sum_{j=1}^{r}\lambda_{j}^{k}p_j\hat{p}_{j} \\ 
\sum_{j=1}^{r}\lambda_{j}\varepsilon_{j}\hat{p}_{j}& \sum_{j=1}^{r}\lambda_{j}^{2}\hat{p}_{j}&  & \sum_{j=1}^{r}\lambda_{j}^{k+1}\hat{p}_{j}^{2} \\
 \vdots &  &  &  \\  
\sum_{j=1}^{r}\lambda_{j}^{k}\varepsilon_{j}\hat{p}_{j}& \sum_{j=1}^{r}\lambda_{j}^{k+1}\hat{p}_{j}^{2}&  & \sum_{j=1}^{r}\lambda_{j}^{2k}\hat{p}_{j}^{2}
\end{array} \right|
\end{equation}
 because $$ \sum_{j_{1}=1}^{r}...\sum_{j_k=1}^{r}\hat{p}_{j_1}^2...\hat{p}_{j_k}^{2}\left| \begin{array}{cccc}
 1& \lambda_{i} &...  & \lambda_{i}^{k}\\ 
\lambda_{j_1}& \lambda_{j_1}^{2}&  & \lambda_{j_1}^{k+1} \\
 \vdots &  &  &  \\  
\lambda_{j_k}^{k}& \lambda_{j_k}^{k+1}&  & \lambda_{j_k}^{2k}
\end{array} \right|$$
$$=\sum_{(j_{1},..,j_{k})\in I^{+}_{k}}\left[  \hat{p}_{j_{1}}^{2}...\hat{p}_{j_{k}}^{2}\lambda_{j_{1}}^{2}...\lambda_{j_{k}}^{2}V(\lambda_{j_{1}},...,\lambda_{j_{k}})^{2} \prod_{l=1}^{k}(1-\frac{\lambda_i}{\lambda_{j_{l}}})\right]. $$
 
From Equation (\ref{eq:G1}), (\ref{eq:G2}) and the expression of $\hat{Q}_k$ (cf. Theorem \ref{theo: expression-det-2}) we get Lemma \ref{lem:decompo1}.
\end{proof}

\section{Conclusion}
In this paper, through the expression obtained for the residuals, we have proposed a new approach for the PLS method. We have established new exact expressions for the main PLS objects (filter factors, estimator, empirical risk, MSE). This is useful to provide new interpretations and to shed new light on the behaviour of PLS. Furthermore, this approach provides a unified framework to recover well known properties of the PLS estimator proved earlier through different methods by De Jong, Goutis, Lingjaerde and Christophersen, Phatak and de Hoog or Kramer. Our approach is powerful as it allows both to recover all these properties at the same time and new ones.
\bibliographystyle{apalike}
\bibliography{PLSbiblio}

\end{document}